\documentclass[11pt]{amsart}
\usepackage{amsmath, amssymb, comment, graphicx, url, amsthm, multicol, latexsym, enumerate, bbm, mathtools, physics, microtype, cite, xcolor, float}
\usepackage{units}
\usepackage[all,cmtip]{xy}
\usepackage{wrapfig}
\usepackage[hmargin=1in]{geometry} 
\usepackage{tikz}
\usepackage{tikzsymbols}
\usetikzlibrary{shapes.geometric, arrows}
\usepackage[cm]{sfmath}
\usepackage{hyperref}
\usepackage{indentfirst}
\newcommand{\sym}[1]{\mathfrak{S}_{#1}}
\usepackage{subcaption}

\definecolor{andresblue}{rgb}{0,0.72,0.92}
\definecolor{andrespink}{rgb}{1,0,1}
\definecolor{orange}{rgb}{1,0.5,0}

\tikzset{
    back/.style={
        loosely dotted,
        thin
    },
    edge/.style={
        color=black,
        thick
    },
    facet/.style={
        fill=andresblue,
        fill opacity=0.333333
    },
    vertex/.style={
        inner sep=1.5pt,
        circle,
        draw=black,
        fill=andrespink,
        thick
    },
    subvertex/.style={
        inner sep=1.5pt,
        circle,
        draw=orange!75!black,
        fill=orange,
        thick
    },
    subedge/.style={
        color=orange!95!black,
        thick
    },
    subfacet/.style={
        fill=orange!95!black,
        fill opacity=0.5
    },
    subback/.style={
        dotted,
        thick
    },
    slabel/.style n args={2}{
        label={[font=\scriptsize,black,#1]:{#2}}
    },
}

\hypersetup{
    colorlinks=true,
    linkcolor=blue,
    citecolor=magenta,
    filecolor=magenta,      
    urlcolor=magenta,
}

\newtheorem{theorem}{Theorem}[section]
\newtheorem{proposition}[theorem]{Proposition}

\newtheorem{conjecture}[theorem]{Conjecture}

\newtheorem{lemma}[theorem]{Lemma}
\newtheorem{corollary}[theorem]{Corollary}

\theoremstyle{definition}
\newtheorem{remark}[theorem]{Remark}
\newtheorem{definition}[theorem]{Definition}

\newtheorem{example}[theorem]{Example}

\theoremstyle{remark}

\newcommand{\defterm}[1]{\emph{#1}}

\newcommand{\R}{\mathbb{R}}

\newcommand{\Z}{\mathbb{Z}}

\newcommand{\Q}{\mathbb{Q}}

\renewcommand{\vb}{\mathbf{b}}

\newcommand{\ve}{\mathbf{e}}

\newcommand{\vv}{\mathbf{v}}
\newcommand{\vx}{\mathbf{x}}
\newcommand{\vz}{\mathbf{z}}

\newcommand{\etal}{\textit{et al.} }

\newcommand{\ehr}{\mathrm{ehr}}

\newcommand{\Lp}{\mathfrak{L}}

\newcommand{\X}{\mathfrak{X}}

\DeclareMathOperator{\conv}{conv}
\DeclareMathOperator{\aff}{aff}

\DeclareMathOperator{\vol}{vol}

\makeatletter % to repeat theorem numbers
\newtheorem*{rep@theorem}{\rep@title}\newcommand{\newreptheorem}[2]{%
\newenvironment{rep#1}[1]{%
\def\rep@title{\bf #2 \ref{##1}}%
\begin{rep@theorem}}%
{\end{rep@theorem}}}
\makeatother
\newreptheorem{theorem}{Theorem}

\usepackage[colorinlistoftodos]{todonotes}

\definecolor{munsell}{rgb}{0.0, 0.5, 0.69}

\begin{document}

\title[Slices, Ehrhart polynomials, and magic positivity]{Lattice slices, Ehrhart polynomials, and magic positivity \\ of generalized parking-function polytopes}

\author[Hill]{Charlie Hill}
\address{Department of Mathematics\\
Harvey Mudd College
}
\email{chhill@g.hmc.edu }

\author[Luo]{Ambrose Luo}
\address{Department of Mathematics\\
Harvey Mudd College
}
\email{amluo@g.hmc.edu }

\author[Trinh]{Vu Trinh}
\address{Department of Mathematical Sciences\\
Claremont McKenna College
}
\email{vtrinh28@students.claremontmckenna.edu}

\author[Vindas-Mel\'{e}ndez]{Andr\'{e}s R. Vindas-Mel\'{e}ndez}
\address{Department of Mathematics\\
Harvey Mudd College
}
\email{avindasmelendez@g.hmc.edu}
\urladdr{https://math.hmc.edu/arvm}

%%%%%%%%%%%%%%%%%%%%%%%%%%%%

\begin{abstract}
For $\mathbf{b}=(b_1,\dots,b_n)\in\mathbb{Z}_{>0}^n$, a $\mathbf{b}$-parking function is a sequence $(\beta_1,\dots,\beta_n)$ of positive integers whose nondecreasing rearrangement $\beta_1'\le\beta_2'\le\cdots\le\beta_n'$ satisfies $\beta_i'\le b_1+\cdots+b_i$.
The $\mathbf{b}$-parking-function polytope $\mathfrak{X}_n(\mathbf{b})$ is the convex hull of all $\mathbf{b}$-parking functions of length $n$ in $\mathbb{R}^n$.
We prove that every lattice slice of $\mathfrak{X}_n(\mathbf{b})$, obtained by fixing one coordinate at an integer value, is itself a $\mathbf{b}'$-parking-function polytope of one dimension less, with an explicit parameter vector $\mathbf{b}'$; this yields a recursion for the number of lattice points of $\mathfrak{X}_n(\mathbf{b})$.
We further show that every dilate of a $\mathbf{b}$-parking-function polytope is a translate of another such polytope, that the number of lattice points is a polynomial function of $\mathbf{b}$, and we deduce an explicit formula for the Ehrhart polynomial of $\mathfrak{X}_n(\mathbf{b})$ for arbitrary $\mathbf{b}$ as a finite sum indexed by draconian sequences, resolving a problem of Hanada, Lentfer, and Vindas-Mel\'endez; an equivalent formula was recently obtained, independently, by Liu and Thawinrak in a closely related setting.
In the special case $\mathbf{b}=(a,b,\dots,b)$, we obtain an explicit closed form and a generating function for the Ehrhart polynomial.
As an application, we classify magic positivity in the two-parameter family $\mathfrak{X}_n(a,b)=\mathfrak{X}_n(a,b,\dots,b)$: the polytope $\mathfrak{X}_n(a,b)$ is magic positive if and only if $(n,a,b)\ne(2,1,1)$.
Thus, we answer a problem posed by Ferroni and Higashitani for $\mathfrak{X}_n(a,b)$.
Our result extends recent work of Liu and Zhang on partial permutahedra and leads us to conjecture that magic positivity holds for every $\mathfrak{X}_n(\mathbf{b})$ with $n\ge3$.

\end{abstract}

%%%%%%%%%%%%%%%%%%%%%%%%%%%%

\maketitle

%%%%%%%%%%%%%%%%%%%%%%%%%%%%

\section{Introduction}\label{sec:intro}

A \emph{classical parking function} of length $n$ is a list $(\alpha_1,\alpha_2,\dots,\alpha_n)$ of positive integers whose nondecreasing rearrangement $\alpha_1'\le\alpha_2'\le\cdots\le\alpha_n'$ satisfies $\alpha_i'\le i$.
It is well known that the number of classical parking functions of length $n$ is $(n+1)^{n-1}$, a quantity that also counts, for example, labeled rooted forests on $n$ vertices and regions of the Shi arrangement in $\R^n$; see \cite{Yan} for further discussion.
Let $PF_n$ denote the convex hull in $\R^n$ of all classical parking functions of length $n$.
In 2020, Stanley \cite{Stanley12191} asked for the volume and the number of vertices, facets, and lattice points of $PF_n$.
These questions were answered by Amanbayeva and Wang \cite{AmanbayevaWang} and independently (in part) by Stong \cite{Stong}.
Subsequently, Behrend \cite{Behrend} obtained a general formula for the Ehrhart polynomials of partial permutahedra, a family introduced in \cite{BCCDEHI,HeuerStriker} that includes $PF_n$ up to integral equivalence.

In this paper we study the convex hull of a generalization of classical parking functions, namely $\vb$-parking functions, which have been explored from an enumerative perspective by Yan \cite{Yan} and Pitman and Stanley \cite{PitmanStanley}.

\begin{definition}\label{def:bpf}
Let $\vb=(b_1,\dots,b_n)\in\Z_{>0}^n$.
A \emph{$\vb$-parking function} is a list $(\beta_1,\dots,\beta_n)$ of positive integers whose nondecreasing rearrangement $\beta_1' \le \beta_2' \le \cdots \le \beta_n'$ satisfies $\beta_i' \le  b_1 + \cdots + b_i$.
\end{definition}

The \emph{$\vb$-parking-function polytope} $\X_n(\vb)$ is the convex hull of all $\vb$-parking functions of length $n$ in $\R^n$.
For positive integers $a$ and $b$, we use the abbreviation
\[
\X_n(a,b):=\X_n(a,b,\dots,b).
\]
This two-parameter family was introduced and studied by Hanada, Lentfer, and Vindas-Mel\'endez \cite{HanadaLentferVindas}, who investigated its face structure and computed its volume.
Bayer \etal \cite{BBCDDDLMMNV} subsequently studied the full family $\X_n(\vb)$ for arbitrary $\vb\in\Z_{>0}^n$: they gave minimal vertex and inequality descriptions, showed that a lifting of $\X_n(\vb)$ is a generalized permutahedron, identified its combinatorial type, and studied it from the perspectives of building sets and polymatroids.
Very recently, Liu and Thawinrak \cite{LiuThawinrak} introduced a broader family $PF(\mathbf u)$, indexed by arbitrary nondecreasing vectors $\mathbf u\in\R^n_{\ge0}$ (so that ties among the entries are allowed), and determined its normal fans, face posets, and $h$-polynomials; every $\X_n(\vb)$ is a lattice translate of a member of their family (see Remark~\ref{rem:LT}).
Hanada, Lentfer, and Vindas-Mel\'endez pose the problem of enumerating the lattice points of $\X_n(\vb)$ and, more generally, of determining its Ehrhart polynomial $\ehr_{\X_n(\vb)}(t):=\#\bigl(t\X_n(\vb)\cap\Z^n\bigr)$; see \cite[Problem~5.8]{HanadaLentferVindas}, where the case $\vb=(a,b,\dots,b)$ is singled out as open.
In this paper we solve these problems, and we apply the solution to establish a strong positivity property of the resulting Ehrhart polynomials.

Our starting point is a structural result about lattice slices.
By the symmetry of $\X_n(\vb)$ we may fix the last coordinate; for an integer $h$, the \emph{layer at height $h$} is the fiber $\X_n(\vb)[h]:=\{\vx\in\R^{n-1}:(\vx,h)\in\X_n(\vb)\}$.
We prove that every such layer is again a parking-function polytope and identify its parameter vector explicitly.

\begin{reptheorem}{thm:slice}
Let $n\ge 2$, and write $h=S_{\ell-1}+r$ with $1\le\ell\le n$ and $1\le r\le b_\ell$, where $S_i:=b_1+\cdots+b_i$.
Then $\X_n(\vb)[h]=\X_{n-1}(\vb')$, where $\vb'\in\Z_{>0}^{\,n-1}$ is given by
\[
\vb'\ =\
\begin{cases}
(\,b_1+b_2,\ b_3,\ \dots,\ b_n\,), & \ell=1,\\
(\,b_1,\dots,b_{\ell-2},\ b_{\ell-1}+b_\ell-r,\ b_{\ell+1}+r,\ b_{\ell+2},\dots,b_n\,), & 1<\ell<n,\\
(\,b_1,\dots,b_{n-2},\ b_{n-1}+b_n-r\,), & \ell=n.
\end{cases}
\]
\end{reptheorem}

Summing over the layers yields a recursion (Theorem~\ref{thm:recursion}) for the lattice-point count $L(\vb):=\#\bigl(\X_n(\vb)\cap\Z^n\bigr)$, which determines $L(\vb)$ for every $\vb$ from the initial condition $L(b_1)=b_1$.

We then turn to Ehrhart theory.
Two further structural facts allow us to leverage the recursion.
First, every dilate of a $\vb$-parking-function polytope is a lattice translate of another $\vb$-parking-function polytope (Lemma~\ref{lem:dilation}).
Second, the lattice-point count $L(\vb)$, initially defined only as a recursively computed counting function on positive integer parameter vectors, is in fact the restriction of a single polynomial in the parameters $b_1,\dots,b_n$ (Proposition~\ref{prop:polynomiality}).
Combining these facts with a lattice-point formula of Postnikov \cite{Postnikov} for trimmed generalized permutahedra, we obtain an explicit formula for the Ehrhart polynomial of $\X_n(\vb)$ for \emph{arbitrary} $\vb$, as a finite sum indexed by draconian sequences, that is, by multisets of subsets of $[n]$ admitting a system of distinct representatives (Theorem~\ref{thm:general}).

Notably, the formula holds for all $\vb$ even though the underlying Minkowski-sum decomposition of $\X_n(\vb)$ has negative coefficients in general: both sides of the formula are polynomial functions of $\vb$, so their agreement on the parameter vectors with nonnegative coefficients, which are plentiful enough to determine a polynomial, forces agreement everywhere.
An equivalent draconian-sum formula was obtained very recently by Liu and Thawinrak \cite[Corollary~7.6]{LiuThawinrak} for their family $PF(\mathbf u)$.
Their derivation invokes Postnikov's formula directly, which is stated for nonnegative Minkowski coefficients; the extension to mixed-sign coefficients is also available from the signed generalized-permutahedron theory of Jochemko and Ravichandran \cite{JochemkoRavichandran}, and our polynomiality argument gives an independent, family-specific route to it (see Remark~\ref{rem:LT}).

Specializing to $\vb=(a,b,\dots,b)$, the draconian sum collapses to a sum over graphs in which every connected component contains at most one cycle, and the machinery of exponential generating functions, adapted from Behrend's work on partial permutahedra \cite{Behrend}, produces a fully explicit answer, resolving the problem posed in \cite[Section~5.2]{HanadaLentferVindas}.

\begin{reptheorem}{thm:ehrhart-ab}
For all positive integers $a,b,n$,
\[
\begin{split}
\ehr_{\X_n(a,b)}(t)
=\frac{1}{2^{\,n}}\sum_{i=0}^{\lfloor n/2\rfloor}\sum_{j=2i}^{n}
(-1)^{i+1}&\binom{n}{\,n-j,\ j-2i,\ i,\ i\,}\,i!\,(2j-4i-3)!!\\[-1pt]
&\times(bt)^{\,j-i}\bigl((2bn-b+2a-2)t+2\bigr)^{\,n-j},
\end{split}
\]
where $(2j-4i-3)!!:=-\prod_{k=1}^{\,j-2i}(2k-3)$.
\end{reptheorem}

Finally, we study the positivity properties of these Ehrhart polynomials.
Recall that a lattice polytope $P$ of dimension $d$ is \emph{Ehrhart positive} if all coefficients of $\ehr_P(t)$ are nonnegative, and \emph{magic positive} if $\ehr_P(t)$ has nonnegative coefficients in the binomial-type basis $\{t^i(t+1)^{d-i}\}_{i=0}^d$; magic positivity implies Ehrhart positivity and, by a result of Br\"and\'en \cite{Branden}, real-rootedness of the $h^*$-polynomial.
Ferroni and Higashitani \cite[Problem~4.23]{FerroniHigashitani} include generalized parking-function polytopes among the families for which they ask whether, or to what extent, magic positivity holds.
Here we give a complete answer for the two-parameter family $\X_n(a,b)=\X_n(a,b,\dots,b)$ studied by Hanada, Lentfer, and Vindas-Mel\'endez \cite{HanadaLentferVindas}.

Liu and Zhang \cite{LiuZhang} recently classified magic positivity for partial permutahedra in the stable range: $\mathcal P(m,p)$ is magic positive for $p\ge m-1$, with the single exception of $\mathcal P(2,1)$.
Here and throughout, $\mathcal P(m,p)$ denotes the partial permutahedron on $m$ coordinates with second parameter $p$, denoted $\mathcal P(m,n)$ in \cite{BCCDEHI,Behrend,LiuZhang}; we reserve the letter $n$ for the length of the parameter vector $\vb$.
Since $\X_n(a,1)\cong\mathcal P(n,n+a-2)$ by \cite[Proposition~3.16]{HanadaLentferVindas}, our family extends these stable-range partial permutahedra by the additional parameter $b$.
We give the complete magic-positivity classification for this two-parameter family.

\begin{reptheorem}{thm:magic}
Let $a,b,n$ be positive integers. The polytope $\X_n(a,b)$ is magic positive if and only if $(n,a,b)\ne(2,1,1)$.
In the exceptional case, $\X_2(1,1)=PF_2$ and the coefficient of $t(t+1)$ is $-\tfrac12$.
\end{reptheorem}

Thus, Theorem~\ref{thm:magic} gives a complete answer to \cite[Problem~4.23]{FerroniHigashitani} for the two-parameter family $\X_n(a,b,\dots,b)$: every polytope in this family is magic positive except $\X_2(1,1)=PF_2$.
The corresponding question for arbitrary $\X_n(\vb)$ remains open; see Conjecture~\ref{conj:magicgeneral}.
Since the $h^*$-polynomial of $PF_2$ is the constant $1$, it follows that the $h^*$-polynomial of $\X_n(a,b)$ is real-rooted for \emph{all} positive integers $a,b,n$ (Corollary~\ref{cor:realrooted}).
Our proof follows the strategy of Liu and Zhang \cite{LiuZhang}, with two new ingredients: the closed form of Theorem~\ref{thm:ehrhart-ab} in the coefficient-extraction format, and refined coefficient estimates for a $b$-deformed version of their auxiliary series.
Computational evidence suggests that the phenomenon is not special to $\vb=(a,b,\dots,b)$, and we conjecture that $\X_n(\vb)$ is magic positive for every $\vb\in\Z_{>0}^n$ with $n\ge3$ (Conjecture~\ref{conj:magicgeneral}).

The paper is structured as follows:
\begin{itemize}
    \item In Section~\ref{sec:prelim} we recall the inequality description of $\X_n(\vb)$ from \cite{BBCDDDLMMNV}, fix notation, and review the necessary background from Ehrhart theory.
    \item In Section~\ref{sec:slices} we prove Theorem~\ref{thm:slice}, describing all lattice slices of $\X_n(\vb)$, and we describe the block structure of the layers.
    \item In Section~\ref{sec:recursion} we derive the recursion for the lattice-point count $L(\vb)$ (Theorem~\ref{thm:recursion}).
    \item In Section~\ref{sec:general} we prove the dilation lemma, the polynomiality of $L(\vb)$, and an explicit formula for $\ehr_{\X_n(\vb)}(t)$ for arbitrary $\vb$ (Theorem~\ref{thm:general}; compare with \cite{LiuThawinrak}), together with a volume formula (Corollary~\ref{cor:volume}).
    \item In Section~\ref{sec:ehrhart} we specialize to $\vb=(a,b,\dots,b)$ and prove Theorem~\ref{thm:ehrhart-ab}, as well as the generating-function identity \eqref{eq:gf-ab}.
    \item In Section~\ref{sec:magic} we prove Theorem~\ref{thm:magic} and Corollary~\ref{cor:realrooted}.
    \item We conclude in Section~\ref{sec:future} with some directions for further study.
\end{itemize}

%%%%%%%%%%%%%%%%%%%%%%%%%%%%

\section{Preliminaries}\label{sec:prelim}

\subsection{The \texorpdfstring{$\vb$}{b}-parking-function polytope}\label{subsec:polytope}

Throughout, fix a positive integer $n$ and a vector $\vb=(b_1,\dots,b_n)\in\Z_{>0}^n$, and write $[n]:=\{1,2,\dots,n\}$.
Following~\cite{BBCDDDLMMNV}, set
\[
S_i:=\sum_{j=1}^{i}b_j\quad(0\le i\le n,\ \text{with } S_0:=0),
\qquad
\vv_k:=(1,\dots,1,S_{k+1},S_{k+2},\dots,S_n)\in\R^n\quad(0\le k\le n).
\]

\begin{definition}[{\cite[Definition 2.1]{BBCDDDLMMNV}}]\label{def:poly}
The \emph{$\vb$-parking-function polytope} $\X_n(\vb)$ is the convex hull of all $\vb$-parking functions of length $n$ in $\R^n$.
\end{definition}

The polytope $\X_n(\vb)$ is invariant under the action of the symmetric group $\sym{n}$ permuting coordinates, and its vertices are the points $\pi(\vv_k)$ for $\pi\in\sym{n}$ and $0\le k\le n$~\cite[Theorem 2.3(b)]{BBCDDDLMMNV}. 
We use the following inequality description.

\begin{theorem}[{\cite[Theorem 2.3(c)]{BBCDDDLMMNV}}]\label{thm:ineq}
The polytope $\X_n(\vb)$ is the set of points $(x_1,\dots,x_n)\in\R^n$ such that $x_i\ge 1$ for $1\le i\le n$ and
\begin{equation}\tag{$\vb$-parking}\label{eq:bpf}
\sum_{i\in I}x_i\ \le\ \sum_{i=n-|I|+1}^{n}S_i
\end{equation}
for all nonempty $I\subseteq[n]$.
These are the facet-defining inequalities for $\X_n(\vb)$ when $b_1\ge 2$; when $b_1=1$ the inequalities with $|I|=n-1$ are redundant.
In either case, every point of $\X_n(\vb)$ satisfies (\ref{eq:bpf}) for all nonempty $I\subseteq[n]$.
\end{theorem}

The right-hand side of (\ref{eq:bpf}) depends only on $|I|$. 
For convenience we abbreviate it by
\begin{equation}\label{eq:Sigma}
\Sigma_k\ :=\ \sum_{i=n-k+1}^{n}S_i\qquad(0\le k\le n,\ \text{with } \Sigma_0:=0),
\end{equation}
so that (\ref{eq:bpf}) reads $\sum_{i\in I}x_i\le\Sigma_{|I|}$. 
Since $\Sigma_k-\Sigma_{k-1}=S_{n-k+1}$, the sequence $\Sigma_0<\Sigma_1<\cdots<\Sigma_n$ is determined by $\vb$, and conversely $\vb$ is recovered from it. 
Throughout, we use freely that every point of $\X_n(\vb)$ satisfies (\ref{eq:bpf}) for all nonempty $I\subseteq[n]$ (Theorem~\ref{thm:ineq}), which allows us to describe $\X_n(\vb)$ by the full, possibly redundant, system.

\begin{example}\label{ex:running}
Throughout the paper we illustrate our results with the polytope $\X_3(3,2,2)$.
Here $(S_1,S_2,S_3)=(3,5,7)$, so $(\Sigma_1,\Sigma_2,\Sigma_3)=(7,12,15)$, and Theorem~\ref{thm:ineq} describes $\X_3(3,2,2)$ as the set of points $(x_1,x_2,x_3)\in\R^3$ satisfying
\[
x_i\ge1,\qquad
x_i\le 7,\qquad
x_i+x_j\le 12\ \ (i\ne j),\qquad
x_1+x_2+x_3\le 15.
\]
Its vertices are the permutations of $\vv_0=(3,5,7)$, $\vv_1=(1,5,7)$, $\vv_2=(1,1,7)$, and $\vv_3=(1,1,1)$; the polytope is shown in Figure~\ref{fig:running}.
Since $(3,2,2)$ has the form $(a,b,b)$ with $(a,b)=(3,2)$, this polytope also belongs to the family studied in Sections~\ref{sec:ehrhart} and~\ref{sec:magic}.
\end{example}

\begin{figure}[ht]
    \centering
    \begin{tikzpicture}[scale=0.72]
\draw[back, thick] (0.580,0.680) -- (0.580,6.680);
\draw[back, thick] (0.580,0.680) -- (6.580,0.680);
\draw[back, thick] (0.580,0.680) -- (-1.940,-1.240);
\fill[facet] (-1.940,-1.240) -- (2.060,-1.240) -- (2.060,0.760) -- (0.060,2.760) -- (-1.940,2.760) -- cycle;
\fill[facet] (4.900,1.400) -- (4.900,-0.600) -- (6.580,0.680) -- (6.580,4.680) -- (5.740,4.040) -- cycle;
\fill[facet] (-1.100,5.400) -- (0.900,5.400) -- (3.740,6.040) -- (4.580,6.680) -- (0.580,6.680) -- cycle;
\fill[facet] (2.060,-1.240) -- (4.900,-0.600) -- (4.900,1.400) -- (2.060,0.760) -- cycle;
\fill[facet] (-1.940,2.760) -- (0.060,2.760) -- (0.900,5.400) -- (-1.100,5.400) -- cycle;
\fill[facet] (5.740,4.040) -- (6.580,4.680) -- (4.580,6.680) -- (3.740,6.040) -- cycle;
\fill[facet] (0.060,2.760) -- (2.060,0.760) -- (4.900,1.400) -- (5.740,4.040) -- (3.740,6.040) -- (0.900,5.400) -- cycle;
\draw[edge] (0.580,6.680) -- (4.580,6.680);
\draw[edge] (0.580,6.680) -- (-1.100,5.400);
\draw[edge] (4.580,6.680) -- (6.580,4.680);
\draw[edge] (4.580,6.680) -- (3.740,6.040);
\draw[edge] (6.580,0.680) -- (6.580,4.680);
\draw[edge] (6.580,0.680) -- (4.900,-0.600);
\draw[edge] (6.580,4.680) -- (5.740,4.040);
\draw[edge] (3.740,6.040) -- (5.740,4.040);
\draw[edge] (3.740,6.040) -- (0.900,5.400);
\draw[edge] (5.740,4.040) -- (4.900,1.400);
\draw[edge] (-1.100,5.400) -- (0.900,5.400);
\draw[edge] (-1.100,5.400) -- (-1.940,2.760);
\draw[edge] (0.900,5.400) -- (0.060,2.760);
\draw[edge] (4.900,-0.600) -- (4.900,1.400);
\draw[edge] (4.900,-0.600) -- (2.060,-1.240);
\draw[edge] (4.900,1.400) -- (2.060,0.760);
\draw[edge] (-1.940,-1.240) -- (-1.940,2.760);
\draw[edge] (-1.940,-1.240) -- (2.060,-1.240);
\draw[edge] (-1.940,2.760) -- (0.060,2.760);
\draw[edge] (0.060,2.760) -- (2.060,0.760);
\draw[edge] (2.060,-1.240) -- (2.060,0.760);
\node[vertex] at (0.580,0.680) {};
\node[vertex] at (0.580,6.680) {};
\node[vertex] at (4.580,6.680) {};
\node[vertex] at (6.580,0.680) {};
\node[vertex] at (6.580,4.680) {};
\node[vertex] at (3.740,6.040) {};
\node[vertex] at (5.740,4.040) {};
\node[vertex] at (-1.100,5.400) {};
\node[vertex] at (0.900,5.400) {};
\node[vertex] at (4.900,-0.600) {};
\node[vertex] at (4.900,1.400) {};
\node[vertex] at (-1.940,-1.240) {};
\node[vertex] at (-1.940,2.760) {};
\node[vertex] at (0.060,2.760) {};
\node[vertex] at (2.060,-1.240) {};
\node[vertex] at (2.060,0.760) {};
\node[font=\scriptsize,below left=-1.5pt] at (0.580,0.680) {$(1,1,1)$};
\node[font=\scriptsize,above left=-1.5pt] at (0.580,6.680) {$(1,1,7)$};
\node[font=\scriptsize,above right=-1.5pt] at (4.580,6.680) {$(1,5,7)$};
\node[font=\scriptsize,right=-1.5pt] at (3.740,6.040) {$(3,5,7)$};
\end{tikzpicture}
    \caption{The running example $\X_3(3,2,2)$, with one vertex of each of the four types of Example~\ref{ex:running} labeled.}
    \label{fig:running}
\end{figure}

\subsection{Ehrhart theory}\label{subsec:ehrhart}

The \defterm{Ehrhart function} of a polytope $P\subseteq\R^n$ is
\[
\ehr_P(t)\ :=\ \#\bigl(tP\cap\Z^n\bigr),
\]
where $tP:=\{t\vx:\vx\in P\}$ denotes the $t$-th dilate of $P$ for a positive integer $t$.
A foundational theorem of Ehrhart \cite{Ehrhart} states that if $P$ is a \defterm{lattice polytope}, that is, if every vertex of $P$ has integer coordinates, then the Ehrhart function agrees with a polynomial in $t$ of degree $\dim P$, called the \defterm{Ehrhart polynomial} of $P$; its leading coefficient is the \defterm{relative volume} of $P$ (the volume taken with respect to the lattice $\Z^n\cap\aff(P)$), and its constant term is $1$.
We take care to maintain the distinction between the Ehrhart function and the Ehrhart polynomial: the enumerative identities in this paper are first established for the Ehrhart function, that is, at positive integer dilations, and Ehrhart's theorem then upgrades them to identities of polynomials.
Since the polytopes $\X_n(\vb)$ are full-dimensional (apart from the degenerate case $n=1=b_1$; see \cite[Proposition~2.2]{BBCDDDLMMNV}), for them the relative volume coincides with the Euclidean volume.

The Ehrhart polynomial of a lattice polytope $P$ of dimension $d$ can be written uniquely in the form
\[
\ehr_P(t)\ =\ \sum_{i=0}^{d}h_i^*\binom{t+d-i}{d},
\]
and the polynomial $h^*_P(z):=h_0^*+h_1^*z+\cdots+h_d^*z^d$ is called the \defterm{$h^*$-polynomial} of $P$; a theorem of Stanley \cite{StanleyDecompositions} asserts that its coefficients are nonnegative integers.
A lattice polytope $P$ is \defterm{Ehrhart positive} if all coefficients of its Ehrhart polynomial are nonnegative.
For a comprehensive treatment of Ehrhart theory we refer to the book of Beck and Robins \cite{BeckRobins}, and for a survey of positivity phenomena for Ehrhart coefficients and $h^*$-polynomials to Ferroni and Higashitani \cite{FerroniHigashitani}.

%%%%%%%%%%%%%%%%%%%%%%%%%%%%
\section{Lattice slices}\label{sec:slices}

By $\sym{n}$-invariance, fixing any single coordinate of $\X_n(\vb)$ yields the same family of slices; we fix the last coordinate $x_n$.

\begin{proposition}\label{prop:layers}
The values of $x_n$ on lattice points of $\X_n(\vb)$ are exactly $1,2,\dots,S_n$.
In particular, $\X_n(\vb)$ has $S_n=b_1+\cdots+b_n$ lattice layers in the last coordinate.
\end{proposition}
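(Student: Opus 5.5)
We show two inclusions: every lattice point has last coordinate in $\{1,\dots,S_n\}$, and every such value is attained by some lattice point.

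\emph{Upper and lower bounds.} By Theorem~\ref{thm:ineq}, every point of $\X_n(\vb)$ satisfies $x_n\ge 1$. Taking $I=\{n\}$ in (\ref{eq:bpf}) gives $x_n\le\Sigma_1=S_n$. Hence every point of $\X_n(\vb)$, lattice or not, has $1\le x_n\le S_n$. For a lattice point, $x_n$ is an integer, so $x_n\in\{1,2,\dots,S_n\}$.

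\emph{Attainment.} For each integer $h$ with $1\le h\le S_n$, consider the point $(1,1,\dots,1,h)\in\Z^n$. It is a $\vb$-parking function: its nondecreasing rearrangement is $(1,\dots,1,h)$. The first $n-1$ entries equal $1\le S_i$, since $S_i\ge b_1\ge 1$. The last entry satisfies $h\le S_n$. So this point lies in $\X_n(\vb)$ by Definition~\ref{def:poly}, and it is a lattice point with $x_n=h$. Alternatively, it lies on the segment between the vertices $\vv_n=(1,\dots,1)$ and $\vv_{n-1}=(1,\dots,1,S_n)$.

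The two parts together show that the set of $x_n$-values of lattice points is exactly $\{1,\dots,S_n\}$, which therefore has $S_n$ elements. No step is a real obstacle; the only point needing care is the attainment step, where we use that $h$ sits alone as the maximal entry and is bounded only by $S_n$.
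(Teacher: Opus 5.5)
Your proof is correct and follows the paper's argument essentially verbatim. The bounds come from $x_n\ge 1$ and the inequality with $I=\{n\}$ in Theorem~\ref{thm:ineq}, and attainment comes from the observation that $(1,\dots,1,h)$ is itself a $\vb$-parking function (your alternative via the segment joining the vertices $\vv_n$ and $\vv_{n-1}$ is also valid).
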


\begin{proof}
    Saying that integer values of $x_n$ are \emph{exactly} $1,\dots, S_n$ is a set equality, which we prove by two inclusions. 
    
    First, we show that if a lattice point has last coordinate $h$, then $1\leq h \leq S_n$, so no height outside this range can occur.
    Indeed, for every $\mathbf{x}\in\X_n(\vb)$, Theorem~\ref{thm:ineq} gives $x_n\ge 1$ directly, while taking $I=\{n\}$ in (\ref{eq:bpf}) gives $x_n\le\Sigma_1=S_n$.

 Then, we show that for every integer $h$ with $1\leq h \leq S_n$, there is a lattice point with last coordinate $h$, so every height in the range is attained with no gaps. 
 Note that the point $(1,\dots, 1, h)$ is a $\vb$-parking function: its nondecreasing rearrangement is the point itself and satisfies
 \[1\leq S_i \qquad \text{ for } 1\leq i \leq n-1, \qquad h\leq S_n.\]
Therefore, $(1,\dots, 1, h) \in \X_n(\vb) \cap \Z^n$, so the height $h$ is attained.
\end{proof}

For $1\le h\le S_n$, the \emph{layer at height $h$} is the fiber
\[
\X_n(\vb)[h]\ :=\ \bigl\{(x_1,\dots,x_{n-1})\in\R^{n-1}\ :\ (x_1,\dots,x_{n-1},h)\in\X_n(\vb)\bigr\}.
\]

We now come to the main result of this section (Theorem \ref{thm:slice}), which shows that each such layer is a polytope, a lower-dimensional $\vb$-parking-function polytope.
Informally, as the height $h$ increases from $1$ to $S_n$, the parameter vector of the layer sweeps through a sequence of $(n-1)$-tuples. 
It begins by merging the first two entries of $\vb$ into a single entry $b_1+b_2$, and thereafter, within each block of consecutive heights, it transfers weight one unit at a time from one coordinate to the next.

We prove this in three steps: we first record the inequalities that cut out a layer (Lemma \ref{lem:layerineq}), then determine which of the two competing bounds is active (Lemma \ref{lem:branch}), and finally read off the parameter vector of the layer coordinate by coordinate (Lemma \ref{lem:convert}).

\begin{lemma}\label{lem:layerineq}
    Fix $1 \le h \le S_n$. 
    The layer $\X_n(\vb)[h]$ is the set of $(x_1,\dots, x_{n-1})\in \R^{n-1}$ with $x_i \ge 1$ for $i \in [n-1]$ and 
    \begin{equation}\label{eq:layer}
        \sum_{i\in I}x_i \le \min\{\Sigma_k,\ \Sigma_{k+1}-h\}\qquad \text{for all nonempty } I \subseteq [n-1],\ |I|=k. 
    \end{equation}
\end{lemma}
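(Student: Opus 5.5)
This is a direct substitution into the inequality description of Theorem~\ref{thm:ineq}, which describes $\X_n(\vb)$ by the full (possibly redundant) system $x_i\ge 1$ for $i\in[n]$ and $\sum_{i\in I}x_i\le\Sigma_{|I|}$ for all nonempty $I\subseteq[n]$. By definition, $(x_1,\dots,x_{n-1})$ lies in $\X_n(\vb)[h]$ exactly when $(x_1,\dots,x_{n-1},h)$ satisfies this system. I will sort the inequalities by whether they involve the last coordinate, and then check that the resulting conditions on $(x_1,\dots,x_{n-1})$ are exactly those stated in the lemma.

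First, the lower bounds. The conditions $x_i\ge1$ for $i\le n-1$ carry over unchanged. The condition $x_n\ge 1$ becomes $h\ge1$, which holds by assumption.

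Next, the sets $I\subseteq[n]$ with $n\notin I$. These are precisely the nonempty $I\subseteq[n-1]$, and for $|I|=k$ each gives $\sum_{i\in I}x_i\le\Sigma_k$.

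Finally, the sets containing $n$. Write $I=J\cup\{n\}$ with $J\subseteq[n-1]$ and $|J|=k$.
\begin{itemize}
\item If $J$ is nonempty, the inequality reads $\sum_{i\in J}x_i+h\le\Sigma_{k+1}$, that is, $\sum_{i\in J}x_i\le\Sigma_{k+1}-h$. Here $k+1\le n$, so $\Sigma_{k+1}$ is defined.
\item If $J=\emptyset$, the inequality reads $h\le\Sigma_1=S_n$, which holds by assumption.
\end{itemize}

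For each nonempty $I\subseteq[n-1]$ with $|I|=k$, combining the two upper bounds on the same sum $\sum_{i\in I}x_i$ is equivalent to bounding it by $\min\{\Sigma_k,\Sigma_{k+1}-h\}$, which is \eqref{eq:layer}. The two inequality systems therefore coincide for all $1\le h\le S_n$, so both inclusions hold at once.

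There is no real obstacle. The only care needed is to use the full inequality system, which Theorem~\ref{thm:ineq} guarantees is valid, rather than only the facets. This matters when $b_1=1$: then the inequalities with $|I|=n-1$ are redundant for $\X_n(\vb)$, but they still contribute to \eqref{eq:layer}, and they remain valid constraints that we may include. A second point of care is to check the boundary cases $I=\{n\}$ and $k=n-1$ so that no index runs out of range.
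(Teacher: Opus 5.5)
Your proposal is correct and follows the same argument as the paper: substitute $x_n=h$ into the full system of Theorem~\ref{thm:ineq}, pair the inequality indexed by $I\subseteq[n-1]$ with the one indexed by $I\cup\{n\}$, and note that the one indexed by $\{n\}$, namely $h\le S_n$, holds automatically. You also make explicit that $x_n\ge1$ becomes the automatic condition $h\ge1$, a case the paper's proof leaves implicit.
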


\begin{proof}
    Set $x_n=h$ in the system of Theorem~\ref{thm:ineq}.
    For each nonempty $I\subseteq [n-1]$ with $|I|=k$, the (\ref{eq:bpf}) inequalities indexed by $I$ and by $I\cup\{n\}$ become
    \[
    \sum_{i\in I} x_i\le \Sigma_k
    \qquad\text{and}\qquad
    \sum_{i\in I} x_i \le \Sigma_{k+1}-h,
    \]
    respectively.
    As $I$ ranges over all nonempty subsets of $[n-1]$, these account for every inequality of the restricted system except the one indexed by $\{n\}$, which reads $h\le \Sigma_1=S_n$ and holds automatically by the hypothesis on $h$.
    Combining the two bounds for each $I$ gives the desired description.
\end{proof}

\begin{lemma}\label{lem:branch}
    Write $h=S_{\ell-1}+r$ with $1 \le \ell \le n$ and $1 \le r \le b_\ell$. 
    Then for $1\le k \le n-1$, 
    \[
    \min\{\Sigma_k,\ \Sigma_{k+1}-h\}=
    \begin{cases}
    \Sigma_k,&k\le n-\ell,\\
    \Sigma_{k+1}-h,&k\ge n-\ell+1.
    \end{cases}
    \]
\end{lemma}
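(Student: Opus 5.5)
The comparison reduces to a single difference. Since $\Sigma_{k+1}-\Sigma_k=S_{n-k}$ by \eqref{eq:Sigma}, we have
\[
\bigl(\Sigma_{k+1}-h\bigr)-\Sigma_k \;=\; S_{n-k}-h,
\]
so the minimum is $\Sigma_k$ exactly when $S_{n-k}\ge h$, and it is $\Sigma_{k+1}-h$ when $S_{n-k}\le h$. When $S_{n-k}=h$ the two values coincide, so either formula may be used. The task is therefore to decide, for $1\le k\le n-1$, which side of $h$ the partial sum $S_{n-k}$ falls on.

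Write $m:=n-k$, so $1\le m\le n-1$. Recall $h=S_{\ell-1}+r$ with $1\le r\le b_\ell$, which gives
\[
S_{\ell-1}<h\le S_\ell.
\]
Because all $b_i>0$, the partial sums $S_0<S_1<\cdots<S_n$ are strictly increasing. There are two cases.

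If $k\le n-\ell$, then $m\ge \ell$, hence $S_m\ge S_\ell\ge h$, and the minimum is $\Sigma_k$.

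If $k\ge n-\ell+1$, then $m\le \ell-1$, hence $S_m\le S_{\ell-1}<h$, and the minimum is $\Sigma_{k+1}-h$ (strictly smaller in this case).

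This matches the two cases of the lemma. The boundary values of $\ell$ need a quick check. For $\ell=n$ the first case is empty. For $\ell=1$ the second case requires $k\ge n$, which lies outside the range, so it is vacuous. The only point needing care is the tie $S_m=h$, which can occur only when $m=\ell$ and $r=b_\ell$. There the two expressions agree, so assigning it to the first case is consistent. I do not expect any real obstacle: the whole argument is the telescoping identity $\Sigma_{k+1}-\Sigma_k=S_{n-k}$ together with monotonicity of the $S_i$.
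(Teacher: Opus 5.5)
Your proposal is correct and follows essentially the same argument as the paper: both reduce the comparison to the sign of $S_{n-k}-h$ via $\Sigma_{k+1}-\Sigma_k=S_{n-k}$, then use $S_{\ell-1}<h\le S_\ell$ and strict monotonicity of the partial sums. Your explicit handling of the tie $S_{n-k}=h$ (only when $n-k=\ell$ and $r=b_\ell$) and of the boundary values of $\ell$ is a harmless refinement that the paper leaves implicit.
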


\begin{proof}
    Since $\Sigma_{k+1} - \Sigma_k = S_{n-k}$, we have $\Sigma_k-(\Sigma_{k+1}-h)=h-S_{n-k}$, so $\Sigma_k\le\Sigma_{k+1}-h$ if and only if $h\le S_{n-k}$.
    As $S_{\ell-1}<h\le S_\ell$ and the partial sums $S_i$ are increasing, $h\le S_{n-k}$ holds exactly when $n-k\ge\ell$, that is, when $k\le n-\ell$.    
\end{proof}

\begin{lemma}\label{lem:convert}
    Set $B_k:=\min\{\Sigma_k,\Sigma_{k+1}-h\}$ for $1\le k\le n-1$ and $B_0:=0$, and define
\[
S_k'\ :=\ B_{n-k}-B_{n-k-1}\quad(1\le k\le n-1),
\qquad
b_k'\ :=\ S_k'-S_{k-1}'\quad(\text{with } S_0':=0).
\]
If $b_k'$ is a positive integer for every $1\le k\le n-1$, then $\X_n(\vb)[h]=\X_{n-1}(\vb')$ for $\vb'=(b_1',\dots,b_{n-1}')$.
\end{lemma}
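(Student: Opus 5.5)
The plan is to compare two inequality descriptions directly. By Lemma~\ref{lem:layerineq}, the layer $\X_n(\vb)[h]$ is cut out by $x_i\ge 1$ for $i\in[n-1]$ together with $\sum_{i\in I}x_i\le B_{|I|}$ for all nonempty $I\subseteq[n-1]$. On the other side, since every $b_k'$ is assumed to be a positive integer, $\vb'\in\Z_{>0}^{n-1}$. Hence Theorem~\ref{thm:ineq} applies to $\X_{n-1}(\vb')$. It describes that polytope as the set of $\vx\in\R^{n-1}$ with $x_i\ge1$ and $\sum_{i\in I}x_i\le\Sigma'_{|I|}$ for all nonempty $I\subseteq[n-1]$, where
\[
\Sigma'_k:=\sum_{i=(n-1)-k+1}^{n-1}S_i'
\]
and $S_i'=b_1'+\cdots+b_i'$. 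It therefore suffices to show that $\Sigma'_k=B_k$ for $1\le k\le n-1$; then the two systems coincide verbatim and the sets are equal.

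First, the $S_i'$ defined in the statement are indeed the partial sums of $\vb'$. This holds because $b_k'=S_k'-S_{k-1}'$ with $S_0'=0$ telescopes to $\sum_{j\le i}b_j'=S_i'$. So the $\Sigma'_k$ computed from $\vb'$ is exactly $\sum_{i=n-k}^{n-1}S_i'$. Next, substitute $S_i'=B_{n-i}-B_{n-i-1}$. As $i$ runs from $n-k$ to $n-1$, the index $n-i$ runs from $k$ down to $1$, so
\[
\Sigma'_k=\sum_{j=1}^{k}(B_j-B_{j-1})=B_k-B_0=B_k.
\]
This is the whole computation.

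I do not expect a real obstacle. The main care is in two places. One is the index bookkeeping: the roles of $n$ versus $n-1$ in the definition of $\Sigma'$ must be matched correctly, with the reindexing $j=n-i$. The other is confirming that positivity and integrality of $\vb'$ are precisely what is needed to invoke Theorem~\ref{thm:ineq}. That theorem is stated for $\vb'\in\Z_{>0}^{n-1}$, and its last sentence guarantees that the full, possibly redundant, system describes the polytope regardless of whether $b_1'=1$. The degenerate case $n-1=1$ is covered by the same argument, since the description there is just $1\le x_1\le B_1=S_1'$.
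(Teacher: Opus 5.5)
Your proposal is correct and follows essentially the same route as the paper. It identifies the $S_k'$ as the partial sums of $\vb'$ and telescopes to get $\Sigma_k'=B_k$, so that the system of Lemma~\ref{lem:layerineq} coincides with the full system of Theorem~\ref{thm:ineq} for $\vb'$. The only difference is that the paper telescopes the successive differences $\Sigma_k'-\Sigma_{k-1}'=B_k-B_{k-1}$ starting from $\Sigma_0'=B_0=0$, while you sum and reindex directly.
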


\begin{proof}
    Under the hypothesis, the numbers $S_k'$ are the partial sums of $\vb'\in\Z_{>0}^{n-1}$, and the cardinality-$k$ bound of $\X_{n-1}(\vb')$ is 
    \[\Sigma_k'=\sum_{j=(n-1)-k+1}^{n-1}S_j',\]
    so that
    \[
    \Sigma_k'-\Sigma_{k-1}'\ =\ S_{n-k}'\ =\ B_k-B_{k-1}
    \qquad(1\le k\le n-1).
    \]
    Since $\Sigma_0'=0=B_0$, telescoping gives $\Sigma_k'=B_k$ for all $0\le k\le n-1$.
    Hence, the system of Lemma~\ref{lem:layerineq} cutting out $\X_n(\vb)[h]$ is exactly the full system $x_i\ge1$, 
    \[\sum_{i\in I}x_i\le\Sigma_{|I|}',\]
    which cuts out $\X_{n-1}(\vb')$ by Theorem~\ref{thm:ineq}.
\end{proof}

\begin{theorem}\label{thm:slice}
Let $n\ge 2$, and write $h=S_{\ell-1}+r$ with $1\le\ell\le n$ and $1\le r\le b_\ell$; equivalently, $S_{\ell-1}<h\le S_\ell$.
Then $\X_n(\vb)[h]=\X_{n-1}(\vb')$, where $\vb'\in\Z_{>0}^{\,n-1}$ is given by
\[
\vb'\ =\
\begin{cases}
(\,b_1+b_2,\ b_3,\ \dots,\ b_n\,), & \ell=1,\\
(\,b_1,\dots,b_{\ell-2},\ b_{\ell-1}+b_\ell-r,\ b_{\ell+1}+r,\ b_{\ell+2},\dots,b_n\,), & 1<\ell<n,\\
(\,b_1,\dots,b_{n-2},\ b_{n-1}+b_n-r\,), & \ell=n.
\end{cases}
\]
\end{theorem}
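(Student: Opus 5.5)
The plan is to reduce everything to Lemma~\ref{lem:convert}. We compute the numbers $B_k=\min\{\Sigma_k,\Sigma_{k+1}-h\}$ explicitly using Lemma~\ref{lem:branch}. From these we obtain $S_k'=B_{n-k}-B_{n-k-1}$ and $b_k'=S_k'-S_{k-1}'$, and check that they match the stated vector and are positive integers.

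First compute $S_k'$ for $1\le k\le n-1$. Set $j=n-k$, so $S_k'=B_j-B_{j-1}$ with $1\le j\le n-1$. By Lemma~\ref{lem:branch}, $B_j=\Sigma_j$ for $j\le n-\ell$ and $B_j=\Sigma_{j+1}-h$ for $j\ge n-\ell+1$; also $B_0=0=\Sigma_0$. This gives three regimes.
\begin{itemize}
\item If both $j,j-1\le n-\ell$, i.e.\ $k\ge \ell$, then $S_k'=\Sigma_j-\Sigma_{j-1}=S_{n-j+1}=S_{k+1}$.
\item If both $j,j-1\ge n-\ell+1$ (with $j-1\ge1$), i.e.\ $k\le \ell-2$, then $S_k'=\Sigma_{j+1}-\Sigma_j=S_{n-j}=S_k$.
\item At the crossing $j=n-\ell+1$, i.e.\ $k=\ell-1$, we get $S_{\ell-1}'=\Sigma_{n-\ell+2}-h-\Sigma_{n-\ell+1}=S_{\ell-1}-h$.
\end{itemize}
The crossing case needs care with the endpoints $\ell=1$ (no crossing index, since $k=0$ is excluded) and $\ell=n$ (where $j=1$ and $B_0=0$).

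The crossing value looks wrong, since $S_{\ell-1}-h<0$. I expect reconciling this to be the main obstacle. Recheck the crossing: the formula is $B_j-B_{j-1}=(\Sigma_{j+1}-h)-\Sigma_{j-1}$, not $\Sigma_{j+1}-h-\Sigma_j$. Hence
\[
S'_{\ell-1}=S_{n-j}+S_{n-j+1}-h=S_{\ell-1}+S_\ell-h.
\]
At $\ell=n$ we have $j=1$, and $B_1-B_0=\Sigma_2-h=S_n+S_{n-1}-h$, consistent with the same formula. In summary:
\[
S_k'=\begin{cases} S_k, & k\le \ell-2,\\ S_{\ell-1}+S_\ell-h, & k=\ell-1,\\ S_{k+1}, & k\ge \ell.\end{cases}
\]

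Next take differences, with $h=S_{\ell-1}+r$, so that $S'_{\ell-1}=S_\ell-r$.
\begin{itemize}
\item For $k\le \ell-2$: $b_k'=b_k$.
\item For $k=\ell-1\ge 2$: $b_{\ell-1}'=S_\ell-r-S_{\ell-2}=b_{\ell-1}+b_\ell-r$. The case $k=\ell-1=1$ gives the same expression since $S_0=0$.
\item For $k=\ell$ (when $\ell\le n-1$): $b_\ell'=S_{\ell+1}-(S_\ell-r)=b_{\ell+1}+r$.
\item For $k\ge \ell+1$: $b_k'=S_{k+1}-S_k=b_{k+1}$.
\end{itemize}
When $\ell=1$ there is no crossing index, so $S_1'=S_2$ and $b_1'=b_1+b_2$, with $b_k'=b_{k+1}$ afterwards. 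When $\ell=n$, the last entry is $b_{n-1}+b_n-r$. These are exactly the three cases of the statement.

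Finally, positivity holds in every case. The entries $b_i$ are positive integers, $r\ge1$, and $r\le b_\ell$ gives $b_{\ell-1}+b_\ell-r\ge b_{\ell-1}\ge1$. So Lemma~\ref{lem:convert} applies and yields $\X_n(\vb)[h]=\X_{n-1}(\vb')$.
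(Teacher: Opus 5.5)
Your proof is correct and follows the paper's argument essentially step for step. You reduce to Lemma~\ref{lem:convert}, use Lemma~\ref{lem:branch} to split $S_k'$ into the regimes $k\le\ell-2$, $k=\ell-1$ (where $S'_{\ell-1}=S_{\ell-1}+S_\ell-h$), and $k\ge\ell$, take successive differences, and treat $\ell=1$ and $\ell=n$ as degenerate cases. In the final write-up, simply delete the erroneous first computation $S_{\ell-1}-h$, since the corrected difference $B_{n-\ell+1}-B_{n-\ell}=(\Sigma_{n-\ell+2}-h)-\Sigma_{n-\ell}$ is all that is needed.
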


\begin{proof}
By Lemma \ref{lem:layerineq}, $\X_n(\vb)[h]$ is cut out by $x_i\ge 1$ together with the bounds $\sum_{i \in I}x_i \le B_k$, where $B_k=\min\{\Sigma_k, \Sigma_{k+1}-h\}$ and $k=|I|$.
By Lemma \ref{lem:convert}, it therefore suffices to compute the numbers $S_k'=B_{n-k}-B_{n-k-1}$ and the successive differences $b_k'=S_k'-S_{k-1}'$ (with $S_0':=0$), and to verify that every $b_k'$ is a positive integer.

By Lemma \ref{lem:branch}, $B_m=\Sigma_m$ when $m\le n-\ell$ and $B_m=\Sigma_{m+1}-h$ when $m \ge n-\ell +1$. 
Thus, the two terms $B_{n-k}$ and $B_{n-k-1}$ forming $S_k'$ fall into the same piece except for a single value of $k$, and we treat the three resulting cases in turn. 
Throughout we use $\Sigma_p-\Sigma_{p-1}=S_{n-p+1}$. 

\bigskip 
\noindent \emph{Case 1: $1\le k \le \ell-2$ (both terms in the $\Sigma_\square-h$ piece).}
Here $n-k-1\ge n-\ell+1$, so $B_{n-k}=\Sigma_{n-k+1}-h$ and $B_{n-k-1}=\Sigma_{n-k}-h$, hence,
\[
S_k'=(\Sigma_{n-k+1}-h)-(\Sigma_{n-k}-h)=\Sigma_{n-k+1}-\Sigma_{n-k}=S_k.
\]
Therefore, $b_k'=S_k'-S_{k-1}'=S_k-S_{k-1}=b_k$ (and $b_1'=S_1'-S_0'=b_1$).

\bigskip
\noindent\emph{Case 2: $k=\ell-1$ (each term on a different piece).} Now $B_{n-\ell+1}=\Sigma_{n-\ell+2}-h$ while $B_{n-\ell}=\Sigma_{n-\ell}$, so,
\[
S_{\ell-1}'=(\Sigma_{n-\ell+2}-h)-\Sigma_{n-\ell}=(\Sigma_{n-\ell+2}-\Sigma_{n-\ell})-h=S_{\ell-1}+S_\ell-h.
\]
Since $S_{\ell-2}'=S_{\ell-2}$ by Case 1, and using $S_{\ell-1}-S_{\ell-2}=b_{\ell-1}$ together with $S_\ell-h=b_\ell-r$ (as $h=S_{\ell-1}+r$),
\[
b_{\ell-1}'=S_{\ell-1}'-S_{\ell-2}'=(S_{\ell-1}+S_\ell-h)-S_{\ell-2}=b_{\ell-1}+(S_\ell-h)=b_{\ell-1}+b_\ell-r.
\]

\bigskip
\noindent\emph{Case 3: $\ell\le k\le n-1$ (both terms in the $\Sigma_{\square}$ piece).} 
Here $n-k\le n-\ell$, so $B_{n-k}=\Sigma_{n-k}$ and $B_{n-k-1}=\Sigma_{n-k-1}$, hence,
\[
S_k'=\Sigma_{n-k}-\Sigma_{n-k-1}=S_{k+1}.
\]
At $k=\ell$, using $S_{\ell-1}'=S_{\ell-1}+S_\ell-h$ from Case 2,
\[
b_\ell'=S_\ell'-S_{\ell-1}'=S_{\ell+1}-(S_{\ell-1}+S_\ell-h)=(S_{\ell+1}-S_\ell)+(h-S_{\ell-1})=b_{\ell+1}+r,
\]
while for $\ell+1\le k\le n-1$ we get $b_k'=S_k'-S_{k-1}'=S_{k+1}-S_k=b_{k+1}$.

\bigskip
Assembling the three cases gives
\[
\vb'=(\,b_1,\dots,b_{\ell-2},\ b_{\ell-1}+b_\ell-r,\ b_{\ell+1}+r,\ b_{\ell+2},\dots,b_n\,),
\]
which is the statement for $1<\ell<n$. 

\bigskip
The two boundaries are the degenerate specializations:

\bigskip
\noindent\emph{Boundary $\ell=1$.} 
Then $n-\ell=n-1$, so every $B_m$ with $0\le m\le n-1$ lies in the $\Sigma_{\square}$ piece and only Case 3 occurs.
Hence, $S_k'=S_{k+1}$ for all $k$, giving $b_1'=S_1'=S_2=b_1+b_2$ and $b_k'=S_{k+1}-S_k=b_{k+1}$ for $2\le k\le n-1$; that is, $\vb'=(b_1+b_2,b_3,\dots,b_n)$.

\bigskip 
\noindent\emph{Boundary $\ell=n$.} 
Then $n-\ell=0$, so $B_0=\Sigma_0=0$ is the only term in the $\Sigma_{\square}$ piece and every $B_m$ with $m\ge1$ lies in the $\Sigma_{\square}-h$ piece.
Only Cases 1 and 2 occur: $b_k'=b_k$ for $1\le k\le n-2$, and at $k=n-1=\ell-1$, $b_{n-1}'=b_{n-1}+b_n-r$.
Thus, $\vb'=(b_1,\dots,b_{n-2},b_{n-1}+b_n-r)$.

\bigskip
In every case each entry of $\vb'$ is a positive integer; in particular $b_{\ell-1}+b_\ell-r\ge b_{\ell-1}\ge1$ because $r\le b_\ell$.
Lemma~\ref{lem:convert} now yields $\X_n(\vb)[h]=\X_{n-1}(\vb')$.
\end{proof}

\begin{remark}\label{rem:blocks}
Partition the $S_n$ layers into $n$ \emph{blocks}, where block $\ell$ consists of the layers at heights $S_{\ell-1}+1,\dots,S_\ell$, so that block $\ell$ has $b_\ell$ layers.
By Theorem~\ref{thm:slice}, the first block consists of $b_1$ identical layers, each equal to $\X_{n-1}(b_1+b_2,b_3,\dots,b_n)$. 
For a later block with $2\le\ell<n$, increasing the height by one transfers one unit from the entry $b_{\ell-1}+b_\ell-r$ to the entry $b_{\ell+1}+r$ of $\vb'$, interpolating between adjacent coordinates of $\vb$.
In the final block $\ell=n$ there is no entry $b_{n+1}$, and instead the last entry $b_{n-1}+b_n-r$ decreases by one as the height increases.
\end{remark}

\begin{example}\label{ex:running-slices}
Continuing Example~\ref{ex:running}, let $\vb=(3,2,2)$, so that $(S_1,S_2,S_3)=(3,5,7)$.
In accordance with Proposition~\ref{prop:layers}, the lattice points of $\X_3(3,2,2)$ occur at exactly the heights $x_3\in\{1,\dots,7\}$, so there are seven layers, partitioned into blocks of sizes $3$, $2$, and $2$ as in Remark~\ref{rem:blocks}.
Theorem~\ref{thm:slice} gives:
\[
\begin{array}{c|ccccccc}
h & 1 & 2 & 3 & 4 & 5 & 6 & 7\\\hline
(\ell,r) & (1,1) & (1,2) & (1,3) & (2,1) & (2,2) & (3,1) & (3,2)\\
\vb' & (5,2) & (5,2) & (5,2) & (4,3) & (3,4) & (3,3) & (3,2)
\end{array}
\]
The first block consists of three identical layers, each equal to $\X_2(5,2)$; within the second block, one unit is transferred from the first entry of $\vb'$ to the second as the height increases; and in the final block the last entry shrinks, reflecting the fact that the slices contract as the height approaches its maximum.
Figure~\ref{fig:slice} shows the layer at height $h=4$ inside the polytope, and Figure~\ref{fig:layers} shows all seven layers.
\end{example}

\begin{figure}[ht]
    \centering
    \begin{tikzpicture}[scale=0.72]
\draw[back] (0.580,0.680) -- (0.580,6.680);
\draw[back] (0.580,0.680) -- (6.580,0.680);
\draw[back] (0.580,0.680) -- (-1.940,-1.240);
\fill[subfacet] (0.580,3.680) -- (-1.940,1.760) -- (1.060,1.760) -- (5.320,2.720) -- (6.580,3.680) -- cycle;
\draw[subedge] (0.580,3.680) -- (-1.940,1.760) -- (1.060,1.760) -- (5.320,2.720) -- (6.580,3.680) -- cycle;
\node[subvertex] at (0.580,3.680) {};
\node[subvertex] at (-1.940,1.760) {};
\node[subvertex] at (1.060,1.760) {};
\node[subvertex] at (5.320,2.720) {};
\node[subvertex] at (6.580,3.680) {};
\fill[facet] (-1.940,-1.240) -- (2.060,-1.240) -- (2.060,0.760) -- (0.060,2.760) -- (-1.940,2.760) -- cycle;
\fill[facet] (4.900,1.400) -- (4.900,-0.600) -- (6.580,0.680) -- (6.580,4.680) -- (5.740,4.040) -- cycle;
\fill[facet] (-1.100,5.400) -- (0.900,5.400) -- (3.740,6.040) -- (4.580,6.680) -- (0.580,6.680) -- cycle;
\fill[facet] (2.060,-1.240) -- (4.900,-0.600) -- (4.900,1.400) -- (2.060,0.760) -- cycle;
\fill[facet] (-1.940,2.760) -- (0.060,2.760) -- (0.900,5.400) -- (-1.100,5.400) -- cycle;
\fill[facet] (5.740,4.040) -- (6.580,4.680) -- (4.580,6.680) -- (3.740,6.040) -- cycle;
\fill[facet] (0.060,2.760) -- (2.060,0.760) -- (4.900,1.400) -- (5.740,4.040) -- (3.740,6.040) -- (0.900,5.400) -- cycle;
\draw[edge] (0.580,6.680) -- (4.580,6.680);
\draw[edge] (0.580,6.680) -- (-1.100,5.400);
\draw[edge] (4.580,6.680) -- (6.580,4.680);
\draw[edge] (4.580,6.680) -- (3.740,6.040);
\draw[edge] (6.580,0.680) -- (6.580,4.680);
\draw[edge] (6.580,0.680) -- (4.900,-0.600);
\draw[edge] (6.580,4.680) -- (5.740,4.040);
\draw[edge] (3.740,6.040) -- (5.740,4.040);
\draw[edge] (3.740,6.040) -- (0.900,5.400);
\draw[edge] (5.740,4.040) -- (4.900,1.400);
\draw[edge] (-1.100,5.400) -- (0.900,5.400);
\draw[edge] (-1.100,5.400) -- (-1.940,2.760);
\draw[edge] (0.900,5.400) -- (0.060,2.760);
\draw[edge] (4.900,-0.600) -- (4.900,1.400);
\draw[edge] (4.900,-0.600) -- (2.060,-1.240);
\draw[edge] (4.900,1.400) -- (2.060,0.760);
\draw[edge] (-1.940,-1.240) -- (-1.940,2.760);
\draw[edge] (-1.940,-1.240) -- (2.060,-1.240);
\draw[edge] (-1.940,2.760) -- (0.060,2.760);
\draw[edge] (0.060,2.760) -- (2.060,0.760);
\draw[edge] (2.060,-1.240) -- (2.060,0.760);
\node[vertex] at (0.580,0.680) {};
\node[vertex] at (0.580,6.680) {};
\node[vertex] at (4.580,6.680) {};
\node[vertex] at (6.580,0.680) {};
\node[vertex] at (6.580,4.680) {};
\node[vertex] at (3.740,6.040) {};
\node[vertex] at (5.740,4.040) {};
\node[vertex] at (-1.100,5.400) {};
\node[vertex] at (0.900,5.400) {};
\node[vertex] at (4.900,-0.600) {};
\node[vertex] at (4.900,1.400) {};
\node[vertex] at (-1.940,-1.240) {};
\node[vertex] at (-1.940,2.760) {};
\node[vertex] at (0.060,2.760) {};
\node[vertex] at (2.060,-1.240) {};
\node[vertex] at (2.060,0.760) {};
\node[font=\scriptsize,orange!60!black,left=2pt] at (-1.940,1.760) {$x_3=4$};
\end{tikzpicture}
    \caption{The layer $\X_3(3,2,2)[4]=\X_2(4,3)$ inside $\X_3(3,2,2)$.}
    \label{fig:slice}
\end{figure}

\begin{figure}[ht]
    \centering
    \begin{tikzpicture}[scale=0.20]
\fill[facet] (1,1) -- (7,1) -- (7,5) -- (5,7) -- (1,7) -- cycle;
\draw[edge] (1,1) -- (7,1) -- (7,5) -- (5,7) -- (1,7) -- cycle;
\node[vertex,inner sep=1.05pt] at (1,1) {};
\node[vertex,inner sep=1.05pt] at (7,1) {};
\node[vertex,inner sep=1.05pt] at (7,5) {};
\node[vertex,inner sep=1.05pt] at (5,7) {};
\node[vertex,inner sep=1.05pt] at (1,7) {};
\node[font=\scriptsize] at (4,9.4) {$h=1$};
\node[font=\scriptsize] at (4,-1.7) {$(5,2)$};
\fill[facet] (11,1) -- (17,1) -- (17,5) -- (15,7) -- (11,7) -- cycle;
\draw[edge] (11,1) -- (17,1) -- (17,5) -- (15,7) -- (11,7) -- cycle;
\node[vertex,inner sep=1.05pt] at (11,1) {};
\node[vertex,inner sep=1.05pt] at (17,1) {};
\node[vertex,inner sep=1.05pt] at (17,5) {};
\node[vertex,inner sep=1.05pt] at (15,7) {};
\node[vertex,inner sep=1.05pt] at (11,7) {};
\node[font=\scriptsize] at (14,9.4) {$h=2$};
\node[font=\scriptsize] at (14,-1.7) {$(5,2)$};
\fill[facet] (21,1) -- (27,1) -- (27,5) -- (25,7) -- (21,7) -- cycle;
\draw[edge] (21,1) -- (27,1) -- (27,5) -- (25,7) -- (21,7) -- cycle;
\node[vertex,inner sep=1.05pt] at (21,1) {};
\node[vertex,inner sep=1.05pt] at (27,1) {};
\node[vertex,inner sep=1.05pt] at (27,5) {};
\node[vertex,inner sep=1.05pt] at (25,7) {};
\node[vertex,inner sep=1.05pt] at (21,7) {};
\node[font=\scriptsize] at (24,9.4) {$h=3$};
\node[font=\scriptsize] at (24,-1.7) {$(5,2)$};
\fill[facet] (31,1) -- (37,1) -- (37,4) -- (34,7) -- (31,7) -- cycle;
\draw[edge] (31,1) -- (37,1) -- (37,4) -- (34,7) -- (31,7) -- cycle;
\node[vertex,inner sep=1.05pt] at (31,1) {};
\node[vertex,inner sep=1.05pt] at (37,1) {};
\node[vertex,inner sep=1.05pt] at (37,4) {};
\node[vertex,inner sep=1.05pt] at (34,7) {};
\node[vertex,inner sep=1.05pt] at (31,7) {};
\node[font=\scriptsize] at (34,9.4) {$h=4$};
\node[font=\scriptsize] at (34,-1.7) {$(4,3)$};
\fill[facet] (41,1) -- (47,1) -- (47,3) -- (43,7) -- (41,7) -- cycle;
\draw[edge] (41,1) -- (47,1) -- (47,3) -- (43,7) -- (41,7) -- cycle;
\node[vertex,inner sep=1.05pt] at (41,1) {};
\node[vertex,inner sep=1.05pt] at (47,1) {};
\node[vertex,inner sep=1.05pt] at (47,3) {};
\node[vertex,inner sep=1.05pt] at (43,7) {};
\node[vertex,inner sep=1.05pt] at (41,7) {};
\node[font=\scriptsize] at (44,9.4) {$h=5$};
\node[font=\scriptsize] at (44,-1.7) {$(3,4)$};
\fill[facet] (51,1) -- (56,1) -- (56,3) -- (53,6) -- (51,6) -- cycle;
\draw[edge] (51,1) -- (56,1) -- (56,3) -- (53,6) -- (51,6) -- cycle;
\node[vertex,inner sep=1.05pt] at (51,1) {};
\node[vertex,inner sep=1.05pt] at (56,1) {};
\node[vertex,inner sep=1.05pt] at (56,3) {};
\node[vertex,inner sep=1.05pt] at (53,6) {};
\node[vertex,inner sep=1.05pt] at (51,6) {};
\node[font=\scriptsize] at (54,9.4) {$h=6$};
\node[font=\scriptsize] at (54,-1.7) {$(3,3)$};
\fill[facet] (61,1) -- (65,1) -- (65,3) -- (63,5) -- (61,5) -- cycle;
\draw[edge] (61,1) -- (65,1) -- (65,3) -- (63,5) -- (61,5) -- cycle;
\node[vertex,inner sep=1.05pt] at (61,1) {};
\node[vertex,inner sep=1.05pt] at (65,1) {};
\node[vertex,inner sep=1.05pt] at (65,3) {};
\node[vertex,inner sep=1.05pt] at (63,5) {};
\node[vertex,inner sep=1.05pt] at (61,5) {};
\node[font=\scriptsize] at (64,9.4) {$h=7$};
\node[font=\scriptsize] at (64,-1.7) {$(3,2)$};
\end{tikzpicture}
    \caption{The layers of $\X_3(3,2,2)$; below each layer is its parameter vector $\vb'$.}
    \label{fig:layers}
\end{figure}

%%%%%%%%%%%%%%%%%%%%%%%%%%%%
\section{The lattice-point count recursion}\label{sec:recursion}

For $\vb=(b_1,\dots,b_n)\in\Z_{>0}^n$, let
\[
L(\vb)\ :=\ \#\bigl(\X_n(\vb)\cap\Z^n\bigr)
\]
denote the number of lattice points of the $\vb$-parking-function polytope; since the length of the argument determines the ambient dimension, this notation is unambiguous.
Equivalently, $L(\vb)=\ehr_{\X_n(\vb)}(1)$ in the notation of Section~\ref{subsec:ehrhart}.
Partitioning the lattice points of $\X_n(\vb)$ by the value of the last coordinate and applying Theorem~\ref{thm:slice} immediately yields the following recursion.
\begin{theorem}\label{thm:recursion}
Let $n\ge2$ and $\vb=(b_1,\dots,b_n)\in\Z_{>0}^n$. Then
\begin{align*}
L(b_1,\dots,b_n)
={}& b_1\,L(b_1+b_2,\,b_3,\dots,b_n)\\
&{}+\sum_{\ell=2}^{n-1}\ \sum_{r=1}^{b_\ell}
   L\bigl(b_1,\dots,b_{\ell-2},\,b_{\ell-1}+b_\ell-r,\,b_{\ell+1}+r,\,b_{\ell+2},\dots,b_n\bigr)\\
&{}+\sum_{r=1}^{b_n} L\bigl(b_1,\dots,b_{n-2},\,b_{n-1}+b_n-r\bigr).
\end{align*}
\end{theorem}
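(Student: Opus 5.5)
The recursion comes from partitioning the lattice points of $\X_n(\vb)$ by their last coordinate and counting each layer with Theorem~\ref{thm:slice}.

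By Proposition~\ref{prop:layers}, every lattice point $(x_1,\dots,x_n)$ of $\X_n(\vb)$ has $x_n=h$ for exactly one integer $h$ with $1\le h\le S_n$. For fixed $h$, the map $(x_1,\dots,x_{n-1},h)\mapsto(x_1,\dots,x_{n-1})$ is a bijection between the lattice points of $\X_n(\vb)$ at height $h$ and $\X_n(\vb)[h]\cap\Z^{n-1}$. This holds by the definition of the layer, since $h$ is an integer. Therefore
\[
L(\vb)=\sum_{h=1}^{S_n}\#\bigl(\X_n(\vb)[h]\cap\Z^{n-1}\bigr).
\]

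Each $h\in\{1,\dots,S_n\}$ can be written uniquely as $h=S_{\ell-1}+r$ with $1\le\ell\le n$ and $1\le r\le b_\ell$. This is because the intervals $(S_{\ell-1},S_\ell]$ partition $(0,S_n]$, the partial sums being strictly increasing since every $b_i>0$. We therefore reindex the sum as $\sum_{\ell=1}^{n}\sum_{r=1}^{b_\ell}$, which is exactly the block decomposition of Remark~\ref{rem:blocks}.

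Next, Theorem~\ref{thm:slice} (valid for $n\ge2$) replaces each summand by $L(\vb')$, with $\vb'$ given by the three cases of that theorem:
\begin{itemize}
\item For $\ell=1$, the summand $L(b_1+b_2,b_3,\dots,b_n)$ does not depend on $r$, and the $b_1$ values of $r$ give the term $b_1\,L(b_1+b_2,b_3,\dots,b_n)$.
\item For $2\le\ell\le n-1$, we obtain the middle double sum directly.
\item For $\ell=n$, we obtain the final sum.
\end{itemize}

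The only point needing care is $n=2$. There the middle sum is empty, and the cases $\ell=1$ and $\ell=n$ are distinct and both apply, as the theorem's case split already allows. In the final term the entries $b_1,\dots,b_{n-2}$ are vacuous, so it reads $\sum_{r}L(b_1+b_2-r)$, consistent with Theorem~\ref{thm:slice}. Each $\vb'$ has positive entries, as verified in that theorem, so $L(\vb')$ is defined. No real obstacle is expected; the argument is bookkeeping.
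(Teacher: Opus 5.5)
Your proposal is correct and follows essentially the same route as the paper: partition the lattice points by the value of $x_n$ using Proposition~\ref{prop:layers}, identify each layer with $\X_{n-1}(\vb')$ via Theorem~\ref{thm:slice}, and group the heights into the blocks of Remark~\ref{rem:blocks}, with the first block contributing $b_1$ identical copies. Your explicit handling of the unique decomposition $h=S_{\ell-1}+r$ and of the degenerate case $n=2$ simply spells out bookkeeping that the paper leaves implicit.
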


\begin{proof}
By Proposition~\ref{prop:layers}, the lattice points of $\X_n(\vb)$ are partitioned according to the level $x_n=h$ for $1\le h\le S_n$. 
The number of lattice points at level $h$ equals $|\X_n(\vb)[h]\cap\Z^{n-1}|=L(\vb')$, where $\X_n(\vb)[h]=\X_{n-1}(\vb')$ is the layer of Theorem~\ref{thm:slice}.
Summing over $h$ and grouping the levels into the blocks of Remark~\ref{rem:blocks} gives the stated identity, the first block contributing $b_1$ identical copies.
\end{proof}

Together with the initial condition $L(b_1)=b_1$ for $n=1$ (the lattice points of $\X_1(b_1)=[1,b_1]$ are $1,\dots,b_1$), Theorem~\ref{thm:recursion} determines $L(\vb)$ for every $\vb$.
Note that the parameter changes $\vb\mapsto\vb'$ with $\ell<n$ preserve the total weight $b_1+\cdots+b_n$, while those with $\ell=n$ decrease it by $r$; in particular, every argument on the right-hand side has length $n-1$, so the recursion terminates after $n-1$ steps.

\begin{example}\label{ex:running-count}
For $\X_3(3,2,2)$ (Examples~\ref{ex:running} and~\ref{ex:running-slices}), Theorem~\ref{thm:recursion} reads
\[
L(3,2,2)=3\,L(5,2)+\bigl(L(4,3)+L(3,4)\bigr)+\bigl(L(3,3)+L(3,2)\bigr)
=3\cdot46+(43+39)+(30+22)=272,
\]
where the length-two counts are obtained from a further application of the recursion, which, together with the initial condition $L(b_1)=b_1$, yields the closed form $L(b_1,b_2)=(b_1+b_2)^2-\binom{b_2+1}{2}$.
\end{example}

%%%%%%%%%%%%%%%%%%%%%%%%%%%%

\section{The Ehrhart polynomial of \texorpdfstring{$\X_n(\vb)$}{Xn(b)} for arbitrary \texorpdfstring{$\vb$}{b}}
\label{sec:general}

In this section we determine the Ehrhart polynomial of $\X_n(\vb)$ for arbitrary $\vb\in\Z_{>0}^n$, thereby resolving parts (3) and (4) of \cite[Problem~5.8]{HanadaLentferVindas}.
The results of this section were obtained independently of the recent work of Liu and Thawinrak \cite{LiuThawinrak}, where an equivalent formula is established in the more general setting of their polytopes $PF(\mathbf u)$.
We compare the two approaches in Remark~\ref{rem:LT}.
The proof of the main result of this section (Theorem~\ref{thm:general}) proceeds in four steps.
\begin{enumerate}
    \item First, we show that dilating a $\vb$-parking-function polytope produces a lattice translate of another $\vb$-parking-function polytope, so that the Ehrhart function of $\X_n(\vb)$ is a specialization of the lattice-point count $L$ (Lemma~\ref{lem:dilation}).

    \item Second, we show that $L$, initially defined only on positive integer parameter vectors, is the restriction of a single polynomial $\Lp_n$ in the parameters (Proposition~\ref{prop:polynomiality}); combined with the first step, this expresses $\ehr_{\X_n(\vb)}(t)$ as a polynomial in $\vb$ and $t$ (Corollary~\ref{cor:ehrpoly}).

    \item Third, for those $\vb$ whose signed Minkowski-sum decomposition \eqref{eq:signedmink} has nonnegative coefficients, we apply Postnikov's lattice-point formula for trimmed generalized permutahedra \cite[Theorem~11.3]{Postnikov} to the lifted polytope, obtaining an explicit formula as a sum over draconian sequences.

    \item Finally, since both sides of the resulting identity are polynomials in $\vb$, the vanishing principle of Lemma~\ref{lem:vanishing} extends it to every $\vb\in\Z_{>0}^n$, completing the proof of Theorem~\ref{thm:general}.
\end{enumerate}
Throughout, $\mathbbm{1}:=(1,\dots,1)$ denotes the all-ones vector of the appropriate length.

\subsection{Dilation and polynomiality}\label{subsec:dilation}

We begin by recording how dilation acts on a $\vb$-parking-function polytope.
The following identity generalizes the dilation relations of Hanada, Lentfer, and Vindas-Mel\'endez \cite{HanadaLentferVindas} for the two-parameter family, making explicit how dilation transforms an arbitrary parameter vector.

\begin{lemma}\label{lem:dilation}
Let $t$ be a positive integer and set
\[
\vb^{(t)}\ :=\ \bigl(t(b_1-1)+1,\ tb_2,\ tb_3,\ \dots,\ tb_n\bigr)\in\Z_{>0}^n.
\]
Then $t\,\X_n(\vb)=(t-1)\mathbbm{1}+\X_n\bigl(\vb^{(t)}\bigr)$.
In particular,
\[
\ehr_{\X_n(\vb)}(t)\ =\ L\bigl(\vb^{(t)}\bigr).
\]
\end{lemma}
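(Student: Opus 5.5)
The plan is to compare the two sides through the inequality description of Theorem~\ref{thm:ineq}; that description is taken in its full, possibly redundant form, so it applies to both $\X_n(\vb)$ and $\X_n(\vb^{(t)})$ without any case distinction on $b_1$. Since translation by $(t-1)\mathbbm{1}$ and dilation by $t$ are both affine bijections, it suffices to show that $\vx\in t\X_n(\vb)$ if and only if $\vy:=\vx-(t-1)\mathbbm{1}\in\X_n(\vb^{(t)})$.

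First, $\vx\in t\X_n(\vb)$ means $\vx/t\in\X_n(\vb)$. By Theorem~\ref{thm:ineq} this is equivalent to $x_i\ge t$ for all $i$ and $\sum_{i\in I}x_i\le t\Sigma_{|I|}$ for all nonempty $I\subseteq[n]$. Substituting $x_i=y_i+t-1$ turns these into $y_i\ge 1$ and
\[
\sum_{i\in I}y_i\ \le\ t\Sigma_{|I|}-(t-1)|I|.
\]

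Next, I compute the bounds $\Sigma_k^{(t)}$ for $\vb^{(t)}$. Its partial sums are
\[
S_i^{(t)}=t(b_1-1)+1+t(b_2+\cdots+b_i)=tS_i-(t-1)
\]
for every $1\le i\le n$. Hence
\[
\Sigma_k^{(t)}=\sum_{i=n-k+1}^{n}S_i^{(t)}=t\Sigma_k-(t-1)k.
\]
This is exactly the right-hand side above with $k=|I|$. Since $t(b_1-1)+1\ge1$ and $tb_j\ge1$, we have $\vb^{(t)}\in\Z_{>0}^n$, so Theorem~\ref{thm:ineq} applies to $\X_n(\vb^{(t)})$, and the two systems coincide. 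This proves $t\X_n(\vb)=(t-1)\mathbbm{1}+\X_n(\vb^{(t)})$.

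Finally, translation by the integer vector $(t-1)\mathbbm{1}$ is a bijection of $\Z^n$. Therefore $\#(t\X_n(\vb)\cap\Z^n)=\#(\X_n(\vb^{(t)})\cap\Z^n)=L(\vb^{(t)})$, which gives the Ehrhart function identity $\ehr_{\X_n(\vb)}(t)=L(\vb^{(t)})$.

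There is no real obstacle here. The only points needing care are that the partial-sum shift $-(t-1)$ is uniform in $i$, which is what makes the translation by $(t-1)\mathbbm{1}$ line up with $k\cdot(t-1)$, and that the full inequality system is valid for both polytopes regardless of redundancy.
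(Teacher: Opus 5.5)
Your proposal is correct and follows essentially the same route as the paper. Both arguments dilate the full inequality system of Theorem~\ref{thm:ineq}, substitute $\vx=\vz+(t-1)\mathbbm{1}$, and identify the new bounds $t\Sigma_k-(t-1)k$ as those of $\vb^{(t)}$; the only cosmetic difference is that you compute the partial sums $tS_i-(t-1)$ of $\vb^{(t)}$ directly, whereas the paper recovers them from successive differences of the transformed bounds.
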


\begin{proof}
By Theorem~\ref{thm:ineq}, $\X_n(\vb)$ is the set of solutions of the full system $x_i\ge1$ ($i\in[n]$) and $\sum_{i\in I}x_i\le\Sigma_{|I|}$ ($\emptyset\ne I\subseteq[n]$), so its dilate $t\,\X_n(\vb)$ is the set of solutions of $x_i\ge t$ and $\sum_{i\in I}x_i\le t\,\Sigma_{|I|}$.
Substituting $\vx=\vz+(t-1)\mathbbm{1}$ transforms this system into $z_i\ge1$ and
\[
\sum_{i\in I}z_i\ \le\ t\,\Sigma_{k}-k(t-1)\ =:\ \Sigma_k'\qquad(k=|I|).
\]
The successive differences of the right-hand sides are $\Sigma_k'-\Sigma_{k-1}'=t\,S_{n-k+1}-(t-1)$, so the numbers $\Sigma_k'$ arise from the partial sums $S_i':=t\,S_i-(t-1)$, whose difference sequence is exactly $\vb^{(t)}$: indeed $b_1'=S_1'=t(b_1-1)+1\ge1$ and $b_i'=S_i'-S_{i-1}'=tb_i$ for $2\le i\le n$.
By Theorem~\ref{thm:ineq} applied to $\vb^{(t)}$, the transformed system cuts out precisely $\X_n(\vb^{(t)})$.
Finally, translation by the lattice vector $(t-1)\mathbbm{1}$ preserves the number of lattice points, so $\ehr_{\X_n(\vb)}(t)=\#\bigl(\X_n(\vb^{(t)})\cap\Z^n\bigr)=L(\vb^{(t)})$.
\end{proof}

\begin{example}\label{ex:dilation}
Take $\vb=(3,2,2)$ and $t=2$, so that $\vb^{(2)}=(2\cdot2+1,\ 2\cdot2,\ 2\cdot2)=(5,4,4)$.
By Example~\ref{ex:running}, the polytope $\X_3(3,2,2)$ is cut out by \[x_i\ge1, \quad x_i\le7, \quad x_i+x_j\le12,\quad \text{ and } \quad x_1+x_2+x_3\le15,\] so its dilate $2\,\X_3(3,2,2)$ is cut out by \[x_i\ge2, \quad x_i\le14, \quad x_i+x_j\le24, \quad \text{ and } \quad x_1+x_2+x_3\le30.\]
Substituting $\vx=\vz+\mathbbm 1$ transforms these into \[z_i\ge1, \quad z_i\le13, \quad z_i+z_j\le22, \text{ and } \quad z_1+z_2+z_3\le27,\] which are exactly the inequalities of $\X_3(5,4,4)$: its partial sums are $(S_1,S_2,S_3)=(5,9,13)$, giving $(\Sigma_1,\Sigma_2,\Sigma_3)=(13,22,27)$.
Hence, $2\,\X_3(3,2,2)=\mathbbm 1+\X_3(5,4,4)$, and a lattice-point count in SageMath confirms $\ehr_{\X_3(3,2,2)}(2)=L(5,4,4)=1743$.
\end{example}

To make use of Lemma~\ref{lem:dilation} we show that $L$ is a polynomial function of the parameter vector.
We will use the following standard polynomial-vanishing principle twice.

\begin{lemma}\label{lem:vanishing}
If a polynomial $f\in\Q[x_1,\dots,x_n]$ vanishes at every point of $\Z_{>0}^n$, then $f=0$.
\end{lemma}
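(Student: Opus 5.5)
We argue by induction on the number of variables $n$, using the standard fact that a nonzero univariate polynomial over $\Q$ has only finitely many roots. We write $f$ as a polynomial in the last variable with coefficients in the remaining ones, and show every coefficient vanishes identically.

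For the base case $n=1$, the polynomial $f\in\Q[x_1]$ vanishes at every positive integer. Since there are infinitely many of these, $f$ has infinitely many roots, so $f=0$.

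For the inductive step, suppose the statement holds for $n-1$ variables. Write
\[
f(x_1,\dots,x_n)=\sum_{j=0}^{d} g_j(x_1,\dots,x_{n-1})\,x_n^{\,j},
\qquad g_j\in\Q[x_1,\dots,x_{n-1}].
\]
Fix any $(c_1,\dots,c_{n-1})\in\Z_{>0}^{n-1}$. The univariate polynomial $x_n\mapsto f(c_1,\dots,c_{n-1},x_n)$ vanishes at every positive integer, so by the base case it is the zero polynomial. Hence $g_j(c_1,\dots,c_{n-1})=0$ for every $j$.

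Since the point $(c_1,\dots,c_{n-1})$ was arbitrary, each $g_j$ vanishes on all of $\Z_{>0}^{n-1}$. The induction hypothesis then gives $g_j=0$ for every $j$, and therefore $f=0$.

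No step here is a real obstacle. The one point requiring care is that the coefficients $g_j$ must be identified as polynomials, not merely as functions, before we apply the induction hypothesis; the decomposition in $x_n$ with polynomial coefficients $g_j$ makes this automatic.
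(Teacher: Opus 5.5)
Your proof is correct. The paper states this lemma without proof, calling it a standard polynomial-vanishing principle, and your induction on the number of variables (reducing one coordinate at a time to the fact that a nonzero univariate polynomial over $\Q$ has only finitely many roots) is the standard argument the paper leaves implicit.
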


\begin{proposition}\label{prop:polynomiality}
There is a unique polynomial $\Lp_n\in\Q[x_1,\dots,x_n]$ of degree at most $n$ such that
\[
L(\vb)=\Lp_n(b_1,\dots,b_n)\qquad\text{for all }\vb\in\Z_{>0}^n.
\]
\end{proposition}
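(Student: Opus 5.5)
Uniqueness is immediate from Lemma~\ref{lem:vanishing}: if two polynomials agree with $L$ on $\Z_{>0}^n$, their difference vanishes there and is therefore zero. For existence I plan to induct on $n$ using the recursion of Theorem~\ref{thm:recursion}. The base case is $\Lp_1(x_1)=x_1$, which has degree $1$. Assume $\Lp_{n-1}$ exists and has degree at most $n-1$. I would then define $\Lp_n$ by replacing each occurrence of $L$ on the right-hand side of Theorem~\ref{thm:recursion} with $\Lp_{n-1}$. The task is to show that each of the three kinds of terms is a polynomial in $b_1,\dots,b_n$ of degree at most $n$.

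The first term, $b_1\,\Lp_{n-1}(b_1+b_2,b_3,\dots,b_n)$, is a product of a linear form with a polynomial of degree at most $n-1$, so its degree is at most $n$. The remaining terms are sums over $r$ from $1$ to $b_\ell$, where the upper limit is itself a variable. For fixed $\ell$, consider the summand $\Lp_{n-1}(b_1,\dots,b_{\ell-2},\,b_{\ell-1}+b_\ell-r,\,b_{\ell+1}+r,\,b_{\ell+2},\dots,b_n)$. I would expand it as a polynomial in $r$ whose coefficients are polynomials in $\vb$. Each monomial $r^j c_j(\vb)$ satisfies $j+\deg c_j\le n-1$, because the entries being substituted are linear forms in $(\vb,r)$.

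Next I use Faulhaber's formula. The sum $\sum_{r=1}^{m} r^j$ equals a polynomial $F_j(m)$ of degree $j+1$ with rational coefficients, valid for every integer $m\ge 1$. Substituting $m=b_\ell$ turns each summed term into $\sum_j c_j(\vb)F_j(b_\ell)$, a polynomial of degree at most $\max_j(\deg c_j+j+1)\le n$. The last sum, over $\ell=n$, is handled the same way with $m=b_n$. Adding the three contributions gives a polynomial $\Lp_n$ of degree at most $n$. It agrees with $L$ on $\Z_{>0}^n$ because the recursion holds at every positive integer vector, and the inductive hypothesis applies there since every argument $\vb'$ on the right-hand side lies in $\Z_{>0}^{n-1}$ by Theorem~\ref{thm:slice}.

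The step that needs care is the degree count in the variable-limit sums: summing over $r$ raises the $r$-degree by one, and this must not push the total degree beyond $n$. The inequality $j+\deg c_j\le n-1$ controls this, since $\Lp_{n-1}$ evaluated at affine-linear arguments in $(\vb,r)$ has total degree at most $n-1$ in $(\vb,r)$ jointly. The only other thing to confirm is that Faulhaber's identity is used only at positive integer values of $b_\ell$, which holds throughout.
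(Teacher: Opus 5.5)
Your proposal is correct and follows the paper's own proof essentially step for step. Both arguments get uniqueness from Lemma~\ref{lem:vanishing}, and both get existence by induction on $n$ through the recursion of Theorem~\ref{thm:recursion}, using Faulhaber's formula to turn each sum over $r$ with a variable upper limit into a polynomial while raising the degree by at most one. You also check explicitly that every argument $\vb'$ on the right-hand side lies in $\Z_{>0}^{n-1}$, so the inductive hypothesis applies there; the paper leaves this point implicit.
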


Parameter polynomiality also follows from the signed generalized-permutahedron valuation theory of Jochemko and Ravichandran \cite[Proposition~4.7 and Corollary~4.8]{JochemkoRavichandran}, applied to the decomposition of Bayer \etal \cite{BBCDDDLMMNV}, whose simplex coefficients are affine-linear in $\vb$; the proof below instead gives a direct, self-contained derivation from the slice recursion of Theorem~\ref{thm:recursion}.

\begin{proof}
Uniqueness is immediate from Lemma~\ref{lem:vanishing}, applied to the difference of two candidates.
For existence we induct on $n$.
For $n=1$ we may take $\Lp_1(x_1)=x_1$.
For $n\ge2$, recall the elementary fact that for any polynomial $p\in\Q[x_1,\dots,x_m,r]$ there is a polynomial $\widetilde p\in\Q[x_1,\dots,x_m,y]$ with $\deg\widetilde p\le\deg p+1$ such that $\sum_{r=1}^{y}p(\vx,r)=\widetilde p(\vx,y)$ for every positive integer $y$.
Indeed, expanding $p$ in powers of $r$ and summing term by term, this follows from Faulhaber's formula, which expresses the power sum $\sum_{r=1}^{y}r^d$ as a polynomial in $y$ of degree $d+1$ \cite[Eq.~(6.78)]{GKP}; summation over $r$ therefore raises the degree by at most one.
For the history of Faulhaber's formula, see \cite{KnuthFaulhaber}.
Now apply the recursion of Theorem~\ref{thm:recursion}.
By the induction hypothesis, each evaluation of $L$ at a length-$(n-1)$ vector on the right-hand side is the evaluation of $\Lp_{n-1}$ at an argument depending affinely on $(b_1,\dots,b_n)$ and on the summation index $r$; hence, each inner summand is a polynomial in $(b_1,\dots,b_n,r)$ of degree at most $n-1$.
Summing over $1\le r\le b_\ell$ (respectively $1\le r\le b_n$) and using the Faulhaber fact, each block contributes a polynomial in $(b_1,\dots,b_n)$ of degree at most $n$, as does the first term $b_1\,\Lp_{n-1}(b_1+b_2,b_3,\dots,b_n)$.
The sum of these polynomials is the desired $\Lp_n$.
\end{proof}

\begin{example}\label{ex:polynomiality}
Example~\ref{ex:running-count} records the length-two count $\Lp_2(x_1,x_2)=(x_1+x_2)^2-\binom{x_2+1}{2}$; here we revisit it to illustrate the polynomiality mechanism of the proof.
From the base case $\Lp_1(x_1)=x_1$, the $n=2$ instance of Theorem~\ref{thm:recursion} writes \[L(b_1,b_2)=b_1\,\Lp_1(b_1+b_2)+\sum_{r=1}^{b_2}\Lp_1(b_1+b_2-r)\] as a sum of values of $\Lp_1$ at arguments affine in $(b_1,b_2,r)$.
Carrying out the sum over $r$ by Faulhaber's formula turns this into a polynomial and raises the degree by exactly one, from $\deg\Lp_1=1$ to $\deg\Lp_2=2=n$; the same step drives the induction in general, and one more application yields the degree-three polynomial $\Lp_3$ of Example~\ref{ex:running-ehrhart}.
\end{example}

\begin{corollary}\label{cor:ehrpoly}
For every $\vb\in\Z_{>0}^n$,
\[
\ehr_{\X_n(\vb)}(t)\ =\ \Lp_n\bigl(t(b_1-1)+1,\ tb_2,\ \dots,\ tb_n\bigr).
\]
In particular, each coefficient of $\ehr_{\X_n(\vb)}(t)$ is a polynomial function of $\vb$, and, provided $\vb\ne(1)$ (so that $\X_n(\vb)$ is full-dimensional),
\[
\vol\X_n(\vb)\ =\ \Lp_n^{\mathrm{top}}(b_1-1,\,b_2,\,\dots,\,b_n),
\]
where $\Lp_n^{\mathrm{top}}$ denotes the degree-$n$ homogeneous component of $\Lp_n$.
In the excluded case, $\X_1(1)=\{(1)\}$ is a point, of relative volume $1$.
\end{corollary}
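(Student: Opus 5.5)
The first identity follows by combining two earlier results. Lemma~\ref{lem:dilation} gives $\ehr_{\X_n(\vb)}(t)=L(\vb^{(t)})$ for every positive integer $t$, where $\vb^{(t)}=(t(b_1-1)+1,tb_2,\dots,tb_n)\in\Z_{>0}^n$. Proposition~\ref{prop:polynomiality} gives $L(\vb^{(t)})=\Lp_n(\vb^{(t)})$. So the identity holds at every positive integer $t$.

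The right-hand side is a polynomial in $t$, since we compose $\Lp_n$ with affine functions of $t$. The left-hand side agrees with the Ehrhart polynomial at all positive integers by Ehrhart's theorem. Two polynomials agreeing on infinitely many points coincide, so the identity holds as an identity of polynomials in $t$.

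Coefficient polynomiality comes from the same composite. The expression $\Lp_n(t(b_1-1)+1,tb_2,\dots,tb_n)$ is a polynomial in $(b_1,\dots,b_n,t)$ with rational coefficients. Expanding in powers of $t$, each coefficient is a polynomial in $\vb$.

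For the volume, I will read off the coefficient of $t^n$. Write $\Lp_n=\sum_{d\le n}\Lp_n^{(d)}$ as a sum of homogeneous components. The argument is $t\,\mathbf{w}+\mathbf{e}_1$, where $\mathbf{w}=(b_1-1,b_2,\dots,b_n)$ and $\mathbf{e}_1=(1,0,\dots,0)$. For each $d$, $\Lp_n^{(d)}(t\mathbf{w}+\mathbf{e}_1)$ has degree at most $d$ in $t$, with $t^d$-coefficient $\Lp_n^{(d)}(\mathbf{w})$. Hence only $d=n$ contributes to $t^n$, and that coefficient is $\Lp_n^{\mathrm{top}}(\mathbf{w})$.

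By Ehrhart's theorem the leading coefficient of a polynomial of degree $\dim P$ is the relative volume. Since $\X_n(\vb)$ is full-dimensional when $\vb\ne(1)$ (cited from \cite{BBCDDDLMMNV} in Section~\ref{subsec:ehrhart}), the degree is $n$. Its $t^n$-coefficient is therefore $\vol\X_n(\vb)$, which gives the formula.

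For the excluded case $\vb=(1)$, $\X_1(1)=[1,1]$ is a single point, whose relative volume is $1$ by convention.

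The only subtle point is not to claim that $\Lp_n^{\mathrm{top}}(\mathbf{w})$ is nonzero a priori. Equality with the volume, which is positive, settles this automatically, so I do not expect a real obstacle here.
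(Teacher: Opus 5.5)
Your proposal is correct and matches the paper's proof essentially step for step. Like the paper, you combine Lemma~\ref{lem:dilation} with Proposition~\ref{prop:polynomiality}, upgrade agreement at positive integers $t$ to an identity of polynomials, and read off the $t^n$-coefficient by observing that only the degree-$n$ homogeneous component of $\Lp_n$ contributes. Your component-by-component bookkeeping is simply a slightly more explicit version of the paper's split $\Lp_n=\Lp_n^{\mathrm{top}}+g$.
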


\begin{proof}
The displayed identity combines Lemma~\ref{lem:dilation} with Proposition~\ref{prop:polynomiality}; both sides are polynomials in $t$ agreeing at every positive integer.
For the volume, recall that $\vol\X_n(\vb)=\lim_{t\to\infty}t^{-n}\ehr_{\X_n(\vb)}(t)$ is the coefficient of $t^n$ in the Ehrhart polynomial.
Write $\Lp_n=\Lp_n^{\mathrm{top}}+g$, where $\Lp_n^{\mathrm{top}}$ is the degree-$n$ homogeneous part of $\Lp_n$ and $\deg g\le n-1$.
Each argument $t(b_1-1)+1,\,tb_2,\dots,tb_n$ is affine in $t$, so $g$ contributes only terms of degree at most $n-1$ in $t$, while the homogeneity of $\Lp_n^{\mathrm{top}}$ gives
\[
\Lp_n^{\mathrm{top}}\bigl(t(b_1-1)+1,\,tb_2,\dots,tb_n\bigr)
=t^{\,n}\,\Lp_n^{\mathrm{top}}(b_1-1,\,b_2,\dots,b_n)+O\bigl(t^{\,n-1}\bigr),
\]
since scaling all arguments by $t$ pulls out $t^n$ and the constant $+1$ in the first argument affects only lower-order terms in $t$.
Hence, the coefficient of $t^n$ is $\Lp_n^{\mathrm{top}}(b_1-1,\,b_2,\dots,b_n)$, as claimed.
\end{proof}

\begin{example}\label{ex:cor-ehrpoly}
For $n=2$ and $\vb=(3,2)$, Lemma~\ref{lem:dilation} gives $\vb^{(t)}=(2t+1,\,2t)$, so Corollary~\ref{cor:ehrpoly}, together with the polynomial $\Lp_2$ of Example~\ref{ex:polynomiality}, yields
\[
\ehr_{\X_2(3,2)}(t)=\Lp_2(2t+1,\,2t)=(4t+1)^2-\binom{2t+1}{2}=14t^2+7t+1.
\]
The leading coefficient is the volume, obtained as in the proof of Corollary~\ref{cor:ehrpoly} by evaluating the top component at $(b_1-1,b_2)=(2,2)$: since $\Lp_2^{\mathrm{top}}(x_1,x_2)=x_1^2+2x_1x_2+\tfrac12 x_2^2$, we get $\vol\X_2(3,2)=\Lp_2^{\mathrm{top}}(2,2)=14$.
The substitution uses $t(b_1-1)+1$ and the volume is evaluated at $b_1-1$, not $b_1$: replacing $b_1-1$ by $b_1$ would give the incorrect value $\Lp_2^{\mathrm{top}}(3,2)=23$.
\end{example}

\subsection{The explicit formula}\label{subsec:formula}

To derive an explicit formula, we first record a Minkowski decomposition of the lifted polytope.
Following \cite[Section~2.2]{BBCDDDLMMNV}, let $\overline{\X}_n(\vb)$ denote the lifting of $\X_n(\vb)$ into the hyperplane $\{\vx\in\R^{n+1}:\sum_{i=1}^{n+1}x_i=\sum_{k=1}^n S_k\}$ obtained by appending the slack coordinate $x_{n+1}=\sum_{k=1}^n S_k-\sum_{i=1}^n x_i$.
This map is a unimodular equivalence, so $\overline{\X}_n(\vb)$ and $\X_n(\vb)$ have the same Ehrhart polynomial.
Write $\triangle_I:=\conv\{\ve_i:i\in I\}$ for the coordinate simplex on $I\subseteq[n+1]$.
Bayer \etal proved that $\overline{\X}_n(\vb)$ is a \emph{signed} Minkowski sum of coordinate simplices with explicit coefficients determined by $\vb$~\cite[Proposition~2.14]{BBCDDDLMMNV}; since those coefficients depend only on the cardinality of the index set, their result may be recast as follows.
For $1\le k\le n$, define
\begin{equation}\label{eq:wk}
w_1(\vb):=b_1-1,
\qquad
w_k(\vb):=\sum_{j=0}^{k-2}(-1)^{j}\binom{k-2}{j}\,b_{k-j}\quad(2\le k\le n),
\end{equation}
so that $w_k(\vb)$ is, for $k\ge2$, the $(k-2)$-th finite difference of the sequence $(b_2,b_3,\dots,b_n)$ at its initial term; for instance $w_2=b_2$, $w_3=b_3-b_2$, and $w_4=b_4-2b_3+b_2$.
Then \cite[Proposition~2.14]{BBCDDDLMMNV} states that
\begin{equation}\label{eq:signedmink}
\overline{\X}_n(\vb)
=\sum_{i=1}^n\triangle_{\{i\}}
\ +\!\!\sum_{\emptyset\ne A\subseteq[n]}\!\! w_{|A|}(\vb)\,\triangle_{A\cup\{n+1\}}
\end{equation}
as a signed Minkowski sum.
Note that the map $\vb\mapsto(w_1,\dots,w_n)$ is a triangular affine bijection of $\Z^n$: by \eqref{eq:wk}, $w_1-b_1=-1$ and, for $k\ge2$, $w_k-b_k$ is a linear function of $b_2,\dots,b_{k-1}$ alone, so the linear part of the map is unitriangular and $\vb$ may be recovered from $\mathbf w$ by back-substitution, giving the inverse $b_1=w_1+1$ and $b_k=\sum_{j=2}^{k}\binom{k-2}{j-2}w_j$ for $k\ge2$.
In particular, the inverse has nonnegative coefficients, so every $(w_1,\dots,w_n)\in\Z_{>0}^n$ arises from some $\vb\in\Z_{>0}^n$.

Our formula is indexed by sequences satisfying the following classical condition of Hall type, called \emph{draconian} (more precisely, \emph{G-draconian}) by Postnikov \cite[Definition~9.2]{Postnikov}.

\begin{definition}\label{def:draconian}
Let $\mathcal D(n)$ be the set of all sequences $\alpha=(\alpha_A)$ of nonnegative integers, indexed by the nonempty subsets $A\subseteq[n]$, such that
\begin{equation}\label{eq:hallgen}
\sum_{A\in E}\alpha_A\ \le\ \Bigl|\,\bigcup_{A\in E}A\,\Bigr|
\qquad\text{for every nonempty collection $E$ of nonempty subsets of $[n]$.}
\end{equation}
\end{definition}

By Hall's marriage theorem, $\alpha\in\mathcal D(n)$ if and only if the multiset consisting of $\alpha_A$ copies of each $A$ admits a system of distinct representatives.
Taking $E$ to be the collection of all nonempty subsets in \eqref{eq:hallgen} shows that $\sum_A\alpha_A\le n$, so $\mathcal D(n)$ is finite.

\begin{theorem}[{compare with \cite[Corollary~7.6]{LiuThawinrak}}]\label{thm:general}
For every $\vb\in\Z_{>0}^n$,
\begin{equation}\label{eq:generalsum}
\ehr_{\X_n(\vb)}(t)
\ =\ \sum_{\alpha\in\mathcal D(n)}\ \prod_{\emptyset\ne A\subseteq[n]}
\binom{w_{|A|}(\vb)\,t+\alpha_A-1}{\alpha_A},
\end{equation}
where each factor is understood as the polynomial $\binom{y+\alpha-1}{\alpha}=\frac{y(y+1)\cdots(y+\alpha-1)}{\alpha!}$ in $y=w_{|A|}(\vb)\,t$.
\end{theorem}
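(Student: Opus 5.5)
The plan follows the four-step outline at the start of this section. The first two steps are already in hand: by Corollary~\ref{cor:ehrpoly}, the left-hand side of \eqref{eq:generalsum} equals $\Lp_n\bigl(t(b_1-1)+1,tb_2,\dots,tb_n\bigr)$, which is a polynomial in $(\vb,t)$. The right-hand side is also a polynomial in $(\vb,t)$. Indeed, $\mathcal D(n)$ is finite, each $w_k(\vb)$ is affine in $\vb$, and each factor $\binom{y+\alpha-1}{\alpha}$ is a polynomial in $y$. So it suffices to prove the identity for positive integer $t$ and for $\vb$ in a Zariski-dense set, and then invoke Lemma~\ref{lem:vanishing}.

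For the dense set I will use those $\vb$ with $w_k(\vb)\ge1$ for all $k$. By the triangular affine bijection noted after \eqref{eq:wk}, these are exactly the preimages of $\mathbf w\in\Z_{>0}^n$. Fix such $\vb$ and a positive integer $t$. Dilating \eqref{eq:signedmink}, which now has nonnegative coefficients, gives
\[
t\,\overline{\X}_n(\vb)=t\sum_{i}\triangle_{\{i\}}+\sum_{A} t\,w_{|A|}\,\triangle_{A\cup\{n+1\}}.
\]
The first sum is a single lattice point, so it is only a translate. The remainder has the form $\Delta+P_G(\{y_A\})$, where $P_G(\{y_A\})=\sum_A y_A\triangle_{A\cup\{n+1\}}$ with $y_A=tw_{|A|}\in\Z_{>0}$. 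I plan to apply Postnikov's formula \cite[Theorem~11.3]{Postnikov}. For a generalized permutahedron $P_G(\{y_I\})=\sum y_I\triangle_I$ with nonnegative integer $y_I$, it says that the number of lattice points of the trimmed polytope $P_G^-$ (the Minkowski difference with $\triangle_{[N]}$) equals $\sum_{\alpha}\prod_I\binom{y_I+\alpha_I-1}{\alpha_I}$, summed over $G$-draconian sequences.

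The key identification is to view $\overline{\X}_n(\vb)$ as such a trimmed polytope. Take the building family $G=\{A\cup\{n+1\}\}_{A}$ on the ground set $[n+1]$. I will check that
\[
\overline{\X}_n(\vb)-\text{(translate)}=\Bigl(\sum_A y_A\triangle_{A\cup\{n+1\}}\Bigr)-\triangle_{[n+1]}\ \text{shifted}
\]
up to the unit-simplex summands $\triangle_{\{i\}}$. Concretely, the summands $\triangle_{\{i\}}$ contribute the lower bounds $x_i\ge1$, and Postnikov's trimming replaces the inequalities $\sum_{i\in I}x_i\ge z_I$ by $\ge z_I+1$ on $[n+1]$; I will match the two facet by facet. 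I also need to check that $G$-draconian for this family reduces exactly to Definition~\ref{def:draconian} after dropping the common element $n+1$ from each set. The intended reduction uses Postnikov's Hall-type condition $\sum_{A\in E}\alpha_A\le|\bigcup_{A\in E}(A\cup\{n+1\})|-1=|\bigcup_{A\in E} A|$, and I must confirm that his sum of the $\alpha$'s equals $N-1=n$ versus $\le n$. If his formula requires $\sum\alpha=N-1$, I will instead pad with the singletons or split the count by the slack. This matching of conventions is the main obstacle: getting the shift (lower bounds $x_i\ge1$ versus the $-1$ of trimming) and the draconian index set exactly right. I will try first to verify it on $n=1,2$ against $\Lp_2$ from Example~\ref{ex:polynomiality}.

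With this in place, the count of lattice points of $t\overline{\X}_n(\vb)$, and hence $\ehr_{\X_n(\vb)}(t)$ by unimodular equivalence, equals the right-hand side of \eqref{eq:generalsum} for every $\mathbf w\in\Z_{>0}^n$ and every $t\ge1$. Rewriting the right-hand side as a polynomial in $\mathbf w$ and $t$, the difference of the two sides vanishes on $\Z_{>0}^{n+1}$. Lemma~\ref{lem:vanishing} then shows that the difference is zero identically in $(\mathbf w,t)$, hence identically in $(\vb,t)$ via the affine bijection. This yields \eqref{eq:generalsum} for all $\vb\in\Z_{>0}^n$.
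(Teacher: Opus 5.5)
Your overall architecture matches the paper's and is sound: both sides are polynomial in $(\vb,t)$ via Corollary~\ref{cor:ehrpoly}, you prove the identity where all $w_k(\vb)\ge1$, and you conclude with Lemma~\ref{lem:vanishing} through the affine bijection $\vb\leftrightarrow\mathbf w$. The gap is in the step you yourself flag as the main obstacle, and as written that step is wrong, not merely unverified. Applying \cite[Theorem~11.3]{Postnikov} to $P_G=\sum_A tw_{|A|}\triangle_{A\cup\{n+1\}}$ counts the lattice points of the trimmed polytope $P_G-\triangle_{[n+1]}$. That is the same Minkowski sum with the coefficient of $\triangle_{[n+1]}$ lowered from $tw_n$ to $tw_n-1$, which is not a translate of $t\,\overline{\X}_n(\vb)$. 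The point summands $\triangle_{\{i\}}$ cannot compensate: they only translate, while a Minkowski difference with a simplex changes the shape. Already for $n=1$ your version gives $t(b_1-1)$ instead of $t(b_1-1)+1$. Relatedly, Postnikov's $G$-draconian sequences have total exactly $n$, so your sum would also run over the wrong index set. Padding with singletons does not help, because the Hall condition for a single set $\{i\}$ forces its entry to be $0$.

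The missing idea is to add the simplex by hand before trimming. The dilated polytope is $P_G$ itself up to translation, with no extra $\triangle_{[n+1]}$ summand. So set $\mathcal Q:=\triangle_{[n+1]}+P_G$; merging with the term $A=[n]$, its coefficient on $\triangle_{[n+1]}$ is $1+tw_n$, and \cite[Lemma~11.1]{Postnikov} gives $\mathcal Q^-=P_G$. Theorem~11.3 then gives a sum over pairs $(c,\beta)$ with $c+\sum_{A\subsetneq[n]}\beta_A=n$ of
\[
\binom{tw_n+c}{c}\prod_{\emptyset\ne A\subsetneq[n]}\binom{tw_{|A|}+\beta_A-1}{\beta_A}.
\]
The Hall condition becomes $\sum_{A\in E}\beta_A\le\bigl|\bigcup_{A\in E}A\bigr|$ for subfamilies avoiding $[n+1]$, and is automatic for subfamilies containing $[n+1]$. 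The hockey-stick identity $\binom{tw_n+c}{c}=\sum_{\alpha_{[n]}=0}^{c}\binom{tw_n+\alpha_{[n]}-1}{\alpha_{[n]}}$ then splits the slack $c$. The pairs $(\beta,\alpha_{[n]})$ with $\alpha_{[n]}\le c$ are exactly the elements of $\mathcal D(n)$, whose totals are at most $n$. This resolves the ``$=n$ versus $\le n$'' issue you raised and is the heart of the paper's proof. Without it, your proposal does not establish \eqref{eq:generalsum} on the set where all $w_k(\vb)\ge1$.
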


\begin{proof}
Both sides of \eqref{eq:generalsum} are polynomial functions of $(b_1,\dots,b_n,t)$.
The right-hand side of \eqref{eq:generalsum}, being a finite sum of products of the polynomials $\binom{w_{|A|}(\vb)\,t+\alpha_A-1}{\alpha_A}$, is visibly polynomial, while the left-hand side, the Ehrhart function $\ehr_{\X_n(\vb)}(t)$, is polynomial in $(b_1,\dots,b_n,t)$ by Corollary~\ref{cor:ehrpoly}.
For $n=1$ the identity is immediate: $\mathcal D(1)=\bigl\{(\alpha_{\{1\}}):\alpha_{\{1\}}\in\{0,1\}\bigr\}$ and the right-hand side is $1+w_1t=(b_1-1)t+1=\ehr_{[1,b_1]}(t)$.
So assume $n\ge2$.

First suppose that $\vb$ lies in the set
\[
D\ :=\ \bigl\{\vb\in\Z_{>0}^n:\ w_k(\vb)\ge 1\ \text{for } 1\le k\le n\bigr\},
\]
and let $t$ be a positive integer.
Dilating \eqref{eq:signedmink} gives
\[
t\,\overline{\X}_n(\vb)= t(\ve_1+\cdots+\ve_n)+P,
\qquad
P:=\!\!\sum_{\emptyset\ne A\subseteq[n]}\!\!\bigl(t\,w_{|A|}(\vb)\bigr)\triangle_{A\cup\{n+1\}},
\]
where the coefficients are now nonnegative integers, so that $P$ is a genuine (unsigned) Minkowski sum, that is, a type-$\mathcal Y$ generalized permutahedron.
Discarding the lattice translation, $\ehr_{\X_n(\vb)}(t)=\#\bigl(P\cap\Z^{n+1}\bigr)$.
Let
\[
\mathcal Q\ :=\ \triangle_{[n+1]}+P\ =\ \bigl(1+tw_n\bigr)\triangle_{[n+1]}\ +\!\!\sum_{\emptyset\ne A\subsetneq[n]}\!\!\bigl(tw_{|A|}\bigr)\triangle_{A\cup\{n+1\}},
\]
where we abbreviate $w_k=w_k(\vb)$ and have merged the summand $\triangle_{[n+1]}$ with the term $A=[n]$.
The trimmed generalized permutahedron of $\mathcal Q$, in the sense of \cite[Definition~11.2]{Postnikov}, is $\mathcal Q^-=P$, since trimming removes one summand $\triangle_{[n+1]}$ from the Minkowski sum \cite[Lemma~11.1]{Postnikov}.
Postnikov's lattice-point formula \cite[Theorem~11.3]{Postnikov}, applied to $\mathcal Q$ on the ground set $[n+1]$, now yields
\begin{equation}\label{eq:postapplied}
\#\bigl(P\cap\Z^{n+1}\bigr)
=\sum_{(c,\beta)}\binom{tw_n+c}{c}\prod_{\emptyset\ne A\subsetneq[n]}\binom{tw_{|A|}+\beta_A-1}{\beta_A},
\end{equation}
where the sum ranges over the draconian data for $\mathcal Q$: nonnegative integers $c$ and $\beta=(\beta_A)_{\emptyset\ne A\subsetneq[n]}$ such that $c+\sum_A\beta_A=n$ and, for every subfamily of the index sets, the number of chosen members (with multiplicity) is at most the cardinality of their union minus one.
For a subfamily involving the index set $[n+1]$ this condition reads $c+\sum_{A\in E}\beta_A\le n$, which already follows from the total count $c+\sum_A\beta_A=n$; for a subfamily avoiding $[n+1]$ it reads \[\sum_{A\in E}\beta_A\le\bigl|\bigcup_{A\in E}A\cup\{n+1\}\bigr|-1=\bigl|\bigcup_{A\in E}A\bigr|,\] which is condition \eqref{eq:hallgen} restricted to proper subsets.
Thus, the admissible data are exactly the sequences $\beta$ satisfying \eqref{eq:hallgen} for collections of proper subsets, with $c:=n-\sum_A\beta_A$; note that $c\ge0$ holds automatically, by \eqref{eq:hallgen} applied to the collection of all nonempty proper subsets of $[n]$, whose union is $[n]$.
Expanding each first factor by the hockey-stick identity
\[
\binom{tw_n+c}{c}=\sum_{\alpha_{[n]}=0}^{c}\binom{tw_n+\alpha_{[n]}-1}{\alpha_{[n]}}
\]
converts the sum \eqref{eq:postapplied} into a sum over pairs $(\beta,\alpha_{[n]})$ with $\alpha_{[n]}\le n-\sum_A\beta_A$.
Setting $\alpha_A:=\beta_A$ for $A\subsetneq[n]$, such pairs correspond bijectively to the sequences $\alpha\in\mathcal D(n)$: conditions \eqref{eq:hallgen} for collections $E$ of proper subsets are those imposed on $\beta$, while for collections $E$ containing $[n]$ the condition $\sum_{A\in E}\alpha_A\le n$ follows from $\alpha_{[n]}+\sum_{A\subsetneq[n]}\beta_A\le n$.
This proves \eqref{eq:generalsum} for all $\vb\in D$ and all positive integers $t$.

Finally, we remove the restriction on $\vb$, and fix a positive integer $t$.
As noted at the start of the proof, each side of \eqref{eq:generalsum} is then a polynomial function of $(b_1,\dots,b_n)$; let $g$ denote their difference.
By the previous paragraph, $g$ vanishes on $D$.
The triangular affine map $\vb\mapsto(w_1,\dots,w_n)$ described after \eqref{eq:signedmink} restricts to a bijection from $D$ onto $\Z_{>0}^n$, so composing $g$ with the inverse map $\mathbf{w}\mapsto\vb$ produces a polynomial vanishing at every point of $\Z_{>0}^n$; by Lemma~\ref{lem:vanishing} that polynomial is zero, and hence, $g=0$.
It follows that \eqref{eq:generalsum} holds for every $\vb\in\Z_{>0}^n$ and every positive integer $t$, and therefore, since each side is a polynomial in $t$, as an identity of polynomials.
\end{proof}

\begin{example}\label{ex:n2}
Let $n=2$ and write $w_1=b_1-1$, $w_2=b_2$.
The set $\mathcal D(2)$ consists of the eight sequences $(\alpha_{\{1\}},\alpha_{\{2\}},\alpha_{\{1,2\}})$ given by $(0,0,0)$, $(1,0,0)$, $(0,1,0)$, $(1,1,0)$, $(0,0,1)$, $(1,0,1)$, $(0,1,1)$, and $(0,0,2)$, so Theorem~\ref{thm:general} gives
\[
\ehr_{\X_2(b_1,b_2)}(t)=1+2w_1t+(w_1t)^2+w_2t+2w_1w_2t^2+\binom{w_2t+1}{2}.
\]
For $\vb=(1,1)$, this is $1+t+\tbinom{t+1}{2}=\tfrac{(t+1)(t+2)}{2}$, the Ehrhart polynomial of the unimodular triangle $PF_2=\X_2(1,1)$.
\end{example}

Extracting the leading coefficient of \eqref{eq:generalsum} gives an explicit formula for the volume, which for $\vb\in D$ is an instance of Postnikov's volume formula \cite[Theorem~9.3]{Postnikov}.

\begin{corollary}\label{cor:volume}
For every $\vb\in\Z_{>0}^n$ with $\vb\ne(1)$,
\[
\vol\X_n(\vb)\ =\ \sum_{\substack{\alpha\in\mathcal D(n)\\ \sum_A\alpha_A=n}}\ \prod_{\emptyset\ne A\subseteq[n]}\frac{w_{|A|}(\vb)^{\alpha_A}}{\alpha_A!}.
\]
\end{corollary}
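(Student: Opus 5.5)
The plan is to read off the coefficient of $t^n$ on both sides of \eqref{eq:generalsum} in Theorem~\ref{thm:general}. Since $\vb\ne(1)$, the polytope $\X_n(\vb)$ is full-dimensional, so its volume is its relative volume. By Ehrhart's theorem (see also Corollary~\ref{cor:ehrpoly}), this volume is the leading coefficient of $\ehr_{\X_n(\vb)}(t)$, namely the coefficient of $t^n$. It therefore suffices to compute the coefficient of $t^n$ in the right-hand side of \eqref{eq:generalsum}, viewed as a polynomial in $t$ with $\vb$ fixed.

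The first step is to expand each factor as a polynomial in $t$. The factor $\binom{w t+\alpha-1}{\alpha}=\frac{wt(wt+1)\cdots(wt+\alpha-1)}{\alpha!}$ has degree at most $\alpha$ in $t$, and its $t^\alpha$-coefficient is $w^\alpha/\alpha!$. This holds even when $w=0$ or $w<0$, where the formula still gives the correct value: for $\alpha\ge1$ and $w=0$ the factor vanishes identically, consistent with $0^\alpha=0$, and for $\alpha=0$ the factor is $1$, consistent with $w^0/0!=1$.

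Next, the product over $A$ for a fixed $\alpha\in\mathcal D(n)$ has degree at most $\sum_A\alpha_A$ in $t$. Its coefficient of $t^{\sum_A\alpha_A}$ is the product of the leading coefficients, namely $\prod_A w_{|A|}^{\alpha_A}/\alpha_A!$. By the remark after Definition~\ref{def:draconian}, every $\alpha\in\mathcal D(n)$ satisfies $\sum_A\alpha_A\le n$. So the summands with $\sum_A\alpha_A<n$ contribute nothing to $t^n$, and those with $\sum_A\alpha_A=n$ contribute exactly $\prod_A w_{|A|}(\vb)^{\alpha_A}/\alpha_A!$. Summing over these $\alpha$ gives the stated formula.

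There is no real obstacle here; the only points needing care are the bookkeeping for vanishing or negative $w_k$, which the polynomial-identity form of Theorem~\ref{thm:general} already covers, and the full-dimensionality hypothesis. The latter is exactly why the case $\vb=(1)$ is excluded: for $\vb=(1)$ the right-hand side evaluates to $0$, since $w_1=0$, while the relative volume of the point is $1$.
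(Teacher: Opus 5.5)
Your proposal is correct and matches the paper's proof: both extract the coefficient of $t^n$ from \eqref{eq:generalsum}, using that the summand indexed by $\alpha$ has degree $\sum_A\alpha_A\le n$ in $t$ with top coefficient $\prod_A w_{|A|}^{\alpha_A}/\alpha_A!$. Your extra remarks, that this coefficient is valid for zero or negative $w_k$ and that the right-hand side vanishes at the excluded $\vb=(1)$, are accurate.
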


\begin{proof}
Divide \eqref{eq:generalsum} by $t^n$ and let $t\to\infty$: the term indexed by $\alpha$ has degree $\sum_A\alpha_A\le n$ in $t$, with equality contributing leading coefficient $\prod_A w_{|A|}^{\alpha_A}/\alpha_A!$.
\end{proof}

\begin{example}\label{ex:running-ehrhart}
We illustrate the results of this section in dimension three.
Iterating the recursion of Theorem~\ref{thm:recursion} as in the proof of Proposition~\ref{prop:polynomiality} produces the polynomial
\begin{align*}
\Lp_3\ ={}& b_1^3+3b_1^2b_2+3b_1^2b_3+3b_1b_2^2+6b_1b_2b_3+\tfrac32\,b_1b_3^2+\tfrac56\,b_2^3+2b_2^2b_3+b_2b_3^2+\tfrac16\,b_3^3\\
&-\tfrac32\,b_1b_3-\tfrac12\,b_2^2-2b_2b_3-\tfrac12\,b_3^2-\tfrac13\,b_2+\tfrac13\,b_3,
\end{align*}
which returns $\Lp_3(3,2,2)=272$, as in Example~\ref{ex:running-count}.
For the running example $\vb=(3,2,2)$, Lemma~\ref{lem:dilation} gives $\vb^{(t)}=(2t+1,\,2t,\,2t)$, and Corollary~\ref{cor:ehrpoly} yields
\[
\ehr_{\X_3(3,2,2)}(t)\ =\ \Lp_3(2t+1,\,2t,\,2t)\ =\ 172t^3+84t^2+15t+1.
\]
The same polynomial arises from Theorem~\ref{thm:general}: here the coefficients \eqref{eq:wk} are $(w_1,w_2,w_3)=(2,2,0)$, and of the $84$ draconian sequences in $\mathcal D(3)$ only those supported on subsets of size at most two contribute to \eqref{eq:generalsum}, because $w_3=0$.
By Corollary~\ref{cor:volume}, $\vol\X_3(3,2,2)=172$.
To illustrate the case of mixed signs, take instead $\vb=(2,3,1)$: then $(w_1,w_2,w_3)=(1,3,-2)$, and \eqref{eq:generalsum} nevertheless yields the correct Ehrhart polynomial
\[
\ehr_{\X_3(2,3,1)}(t)\ =\ \Lp_3(t+1,\,3t,\,t)\ =\ \tfrac{619}{6}t^3+61t^2+\tfrac{77}{6}t+1,
\]
with $\ehr_{\X_3(2,3,1)}(1)=L(2,3,1)=178$.
\end{example}

\begin{remark}
The existence of a lattice-point formula for $\X_n(\vb)$ via generalized permutahedra was already observed in \cite{BBCDDDLMMNV}, where the number of lattice points is noted to be computable from a result of Jochemko and Ravichandran \cite[Corollary~4.8]{JochemkoRavichandran}.
Theorem~\ref{thm:general} makes the enumeration explicit in the parameters $\vb$ and $t$ simultaneously, and Lemma~\ref{lem:dilation} explains why a single polynomial in $\vb$ governs the entire Ehrhart theory of the family.
\end{remark}

\begin{remark}\label{rem:LT}
Liu and Thawinrak \cite{LiuThawinrak} study the polytopes $PF(\mathbf u)$, defined for any nondecreasing $\mathbf u=(u_1,\dots,u_n)\in\R_{\ge0}^n$ as the convex hull of all $\mathbf u$-parking functions; in their notation, $\X_n(\vb)=\mathbbm{1}+PF(S_1-1,\,S_2-1,\,\dots,\,S_n-1)$, and conversely every $PF(\mathbf u)$ with $\mathbf u$ a strictly increasing vector of nonnegative integers is a lattice translate of some $\X_n(\vb)$.
Their Corollary~7.6 is the analogue of Theorem~\ref{thm:general} for integral $PF(\mathbf u)$, obtained from the counterpart of the decomposition \eqref{eq:signedmink} \cite[Proposition~7.3]{LiuThawinrak} by applying Postnikov's formula \cite[Theorem~11.3]{Postnikov}.
We note that Postnikov's theorem is stated for nonnegative Minkowski coefficients, whereas the coefficients $w_k$ (equivalently, their $y_I$) may be negative.
The extension to signed Minkowski sums is already provided in full generality by Jochemko and Ravichandran \cite[Proposition~4.7 and Corollary~4.8]{JochemkoRavichandran}; the passage through Lemma~\ref{lem:dilation}, Proposition~\ref{prop:polynomiality}, and Lemma~\ref{lem:vanishing} in the proof of Theorem~\ref{thm:general} gives an independent, family-specific derivation of the same extension from the slice recursion.
For the volume, Liu and Thawinrak establish the counterpart of Corollary~\ref{cor:volume} \cite[Corollary~7.7]{LiuThawinrak} and extend it to non-integral $\mathbf u$ by polynomiality of mixed volumes.
\end{remark}

%%%%%%%%%%%%%%%%%%%%%%%%%%%%

\section{The Ehrhart polynomial of \texorpdfstring{$\X_n(a,b)$}{Xn(a,b)}}
\label{sec:ehrhart}

Throughout this section we specialize to $\vb=(a,b,\dots,b)\in\Z_{>0}^n$, using the abbreviation $\X_n(a,b)$ introduced in Section~\ref{sec:intro}; these are the generalized parking-function polytopes studied by Hanada, Lentfer, and Vindas-Mel\'endez~\cite{HanadaLentferVindas}.
For these polytopes the partial sums specialize to $S_i=a+(i-1)b$ in the inequality description of Theorem~\ref{thm:ineq}.
We retain from Section~\ref{sec:general} the lifting $\overline{\X}_n(a,b)$, the coordinate simplices $\triangle_I$, and the coefficients $w_k$ of \eqref{eq:wk}.

We obtain the full Ehrhart polynomial of $\X_n(a,b)$ in closed form (Theorem~\ref{thm:ehrhart-ab}) by adapting the method Behrend used for partial permutahedra in \cite{Behrend}.
The proof of Theorem~\ref{thm:ehrhart-ab} proceeds in four steps.
\begin{enumerate}
    \item First, we observe that for $\vb=(a,b,\dots,b)$ the signed Minkowski sum \eqref{eq:signedmink} has nonnegative coefficients, and is thus an unsigned Minkowski sum of dilated coordinate simplices, all of dimension at most two (Corollary \ref{lem:minkowski}); this exhibits $\overline{\X}_n(a,b)$ as a type-$\mathcal Y$ generalized permutahedron.

    \item Second, the general formula of Theorem~\ref{thm:general} collapses to a sum over sequences supported on subsets of size at most two, which Behrend's bijection reindexes as a weighted sum over the graphs $\mathcal G(n)$ whose components each carry at most one cycle (Proposition \ref{prop:graphsum}).

    \item Third, since the weight is multiplicative over
connected components, the exponential formula converts this into a generating function $\exp(f)$, and summing $f$ over the four types of connected graphs (trees, looped trees, enhanced trees, quasitrees) via the tree function $T$ closes
it into \eqref{eq:gf-ab}.

\item Finally, a Lambert-$W$ coefficient extraction turns the generating function into the explicit polynomial \eqref{eq:ehrhart-ab}.
\end{enumerate}

\begin{corollary}[{of \cite[Proposition~2.14]{BBCDDDLMMNV}}]\label{lem:minkowski}
For $\vb=(a,b,\dots,b)$, the lifted polytope is the Minkowski sum
\[
\overline{\X}_n(a,b)
=\sum_{i=1}^n\triangle_{\{i\}}
+(a-1)\sum_{i=1}^n\triangle_{\{i,\,n+1\}}
+b\!\!\sum_{1\le i<j\le n}\!\!\triangle_{\{i,j,\,n+1\}}.
\]
In particular, $\overline{\X}_n(a,b)$ is a type-$\mathcal Y$ generalized permutahedron: a Minkowski sum $\sum_I y_I\triangle_I$ with all $y_I\ge 0$.
\end{corollary}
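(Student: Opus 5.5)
The plan is to specialize the signed decomposition \eqref{eq:signedmink} of \cite[Proposition~2.14]{BBCDDDLMMNV} to $\vb=(a,b,\dots,b)$ by computing the coefficients $w_k(\vb)$ from \eqref{eq:wk}. Three cases occur:
\begin{itemize}
\item $k=1$: $w_1=b_1-1=a-1$.
\item $k=2$: $w_2=b_2=b$.
\item $k\ge3$: $w_k$ is the $(k-2)$-th finite difference of the sequence $(b_2,b_3,\dots,b_n)=(b,b,\dots,b)$ at its initial term, namely $w_k=b\sum_{j=0}^{k-2}(-1)^j\binom{k-2}{j}=b(1-1)^{k-2}=0$, since $k-2\ge1$.
\end{itemize}
So only the subsets $A$ with $|A|\le2$ contribute. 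Substituting into \eqref{eq:signedmink}, the terms with $A=\{i\}$ give $(a-1)\triangle_{\{i,n+1\}}$, and those with $A=\{i,j\}$, $i<j$, give $b\,\triangle_{\{i,j,n+1\}}$. Together with the fixed summands $\sum_i\triangle_{\{i\}}$, this is exactly the displayed formula.

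For the type-$\mathcal Y$ claim, observe that every coefficient is nonnegative: $1$ for the singletons, $a-1\ge0$ because $a\ge1$, and $b>0$. The expression is therefore an honest Minkowski sum $\sum_I y_I\triangle_I$ with all $y_I\ge0$. No cancellation in a signed Minkowski sum has to be interpreted, so the identity holds as polytopes in the usual sense.

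The only delicate point is how ``signed Minkowski sum'' is read in \cite{BBCDDDLMMNV}. When all coefficients are nonnegative, the signed sum must coincide with the ordinary Minkowski sum. I would confirm this through support functions: a signed Minkowski sum is defined by the equality $h_{\overline{\X}}=\sum_I y_I h_{\triangle_I}$, and when the $y_I\ge0$ the right-hand side is the support function of the ordinary Minkowski sum. Equal support functions give equal convex bodies. Failing that, one can compare inequality descriptions: the submodular right-hand sides $z(J)=\sum_{I\cap J\ne\emptyset}y_I$ of the ordinary sum should match the lifted form of Theorem~\ref{thm:ineq} with $S_i=a+(i-1)b$. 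That check is routine counting of the subsets $I$ of size $1,2,3$ (with or without $n+1$) that meet a given $J$.
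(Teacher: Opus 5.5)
Your proposal is correct and matches the paper's proof: compute $w_1=a-1$, $w_2=b$, and $w_k=0$ for $k\ge3$ as a finite difference of a constant sequence, substitute into the signed decomposition of Bayer et al., and note that all coefficients are nonnegative. Your support-function remark on why a signed sum with nonnegative coefficients is an ordinary Minkowski sum is correct; the paper leaves that point implicit.
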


\begin{proof}
For $\vb=(a,b,\dots,b)$, the coefficients \eqref{eq:wk} are $w_1=a-1$ and $w_2=b_2=b$, while $w_k=0$ for $3\le k\le n$, because $w_k$ is the $(k-2)$-nd finite difference of the constant sequence $(b,b,\dots,b)$.
Substituting into \eqref{eq:signedmink} gives the stated decomposition, and all coefficients are nonnegative because $a,b\ge1$.
\end{proof}

\begin{remark}
    When $b=1$ and $a=p-n+2$, Corollary~\ref{lem:minkowski} is the decomposition of the lifted partial permutahedron $\widetilde{\mathcal P}(n,p)$ in \cite[Eq.~(6)]{Behrend}; indeed $\X_n(a,1)\cong\mathcal P(n,n+a-2)$ \cite[Proposition~3.16]{HanadaLentferVindas}.
\end{remark}

We now run Behrend's argument with the coefficients $a-1$ and $b$ in place of the coefficients $p-n+1$ and $1$ for $\mathcal P(n,p)$, respectively.
Recall the tree function 
\begin{equation}\label{eq:treefn}
T(z):=\sum_{m\ge1}m^{m-1}\frac{z^m}{m!}=-W_0(-z),\qquad T(z)=z\,e^{T(z)},
\end{equation}
where $W_0$ is the principal branch of the Lambert $W$ function; this is the function $g_b$ of \cite[Lemma 3.26]{HanadaLentferVindas} with parameter equal to $1$.
Let $\mathcal{G}(n)$ be the set of graphs on $[n]$ (loops and multiple edges are allowed) in which every connected component contains at most one cycle, where a loop is a $1$-cycle and a pair of parallel edges a $2$-cycle. 
For $G\in \mathcal{G}(n)$, an \emph{edge pair} is a pair of parallel edges and a \emph{single edge} is a non-loop edge outside any edge pair. 

\begin{proposition}\label{prop:graphsum}
For all positive integers $a,b,n$,
\begin{equation}\label{eq:graphsum}
\ehr_{\X_n(a,b)}(t)
=\sum_{G\in\mathcal G(n)}
\bigl((a-1)t\bigr)^{\#\mathrm{loops}(G)}(bt)^{\#\mathrm{single}(G)}
\left(\frac{bt(bt+1)}{2}\right)^{\#\mathrm{pairs}(G)}.
\end{equation}
\end{proposition}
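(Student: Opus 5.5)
We start from Theorem~\ref{thm:general} with $\vb=(a,b,\dots,b)$. By the proof of Corollary~\ref{lem:minkowski}, $w_1=a-1$, $w_2=b$ and $w_k=0$ for $k\ge3$.

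\textbf{Step 1: truncate the draconian sum.} For $\alpha_A\ge1$ the factor $\binom{w_{|A|}t+\alpha_A-1}{\alpha_A}$ contains the factor $y=w_{|A|}t$, so it vanishes when $|A|\ge3$. Hence only sequences $\alpha\in\mathcal D(n)$ supported on subsets of size one or two contribute. For such $\alpha$, the Hall condition \eqref{eq:hallgen} forces $\alpha_{\{i\}}\in\{0,1\}$ and $\alpha_{\{i,j\}}\in\{0,1,2\}$. The surviving factors are therefore
\[
\binom{(a-1)t}{1}=(a-1)t,\qquad \binom{bt}{1}=bt,\qquad \binom{bt+1}{2}=\tfrac{bt(bt+1)}{2},
\]
matching the three weights in \eqref{eq:graphsum}.

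\textbf{Step 2: the correspondence with graphs.} To $\alpha$ we associate the multigraph $G_\alpha$ on $[n]$ with a loop at $i$ whenever $\alpha_{\{i\}}=1$, and with $\alpha_{\{i,j\}}$ parallel edges between $i$ and $j$. So $\alpha_{\{i,j\}}=1$ gives a single edge and $\alpha_{\{i,j\}}=2$ gives an edge pair. Under this map, $\alpha\mapsto G_\alpha$ is a bijection from the truncated draconian sequences onto the multigraphs with at most one loop per vertex and edge multiplicity at most $2$, subject to condition \eqref{eq:hallgen}. The weight of $\alpha$ in \eqref{eq:generalsum} is then exactly the weight of $G_\alpha$ in \eqref{eq:graphsum}.

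\textbf{Step 3: the Hall condition is the unicyclicity condition.} This is Behrend's observation, and it is the main step to verify. It suffices to test \eqref{eq:hallgen} on collections $E$ whose union $U$ induces a connected subgraph. Taking $E$ to be all edges and loops inside $U$, the condition says
\[
\#\text{edges}+\#\text{loops}\le|U|
\]
for every connected vertex set $U$. Since a connected graph on $|U|$ vertices has at least $|U|-1$ edges, this says every connected component, and every connected subgraph, has cyclomatic number at most one. Counting a loop as a $1$-cycle and a doubled edge as a $2$-cycle, this means each component contains at most one cycle.

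The condition also shows that components with a loop and a doubled edge, or with two loops, are excluded. Hence the graphs obtained are exactly $\mathcal G(n)$, and the bound $\alpha_{\{i,j\}}\le2$ is automatic from the condition with $E=\{\{i,j\}\}$.

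Conversely, if every component of $G$ has at most one cycle, then for any $E$ the edges of $E$ restricted to each component of the union number at most the vertex count, which gives \eqref{eq:hallgen}. Equivalently, one can orient each unicyclic component so that every edge picks a distinct endpoint, which produces the system of distinct representatives directly.

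Substituting the bijection into the truncated sum gives \eqref{eq:graphsum}. The only delicate points are the multiplicity conventions: parallel edges carry no labels, so each $\alpha$ gives exactly one graph, and a loop counts once.
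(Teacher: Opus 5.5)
Your proposal is correct and follows essentially the same route as the paper. You truncate the draconian sum of Theorem~\ref{thm:general} using $w_k=0$ for $k\ge3$, bound the multiplicities by the Hall condition, and identify the surviving sequences with $\mathcal G(n)$ via the equivalence of the Hall/SDR condition with an indegree-at-most-one orientation, i.e.\ with each component having at most one cycle. The only difference is that you prove the direction ``Hall condition $\Rightarrow$ at most one cycle'' directly by an edge-count (cyclomatic number) argument, whereas the paper cites Behrend for this verification.
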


\begin{proof}
Let $\mathcal E(n)$ denote the collection of all subsets of $[n]$ of size one or two, and let $\mathcal A(n)$ be the set of sequences $\alpha=(\alpha_S)_{S\in\mathcal E(n)}$ of nonnegative integers satisfying the Hall-type condition
\begin{equation}\label{eq:hall}
\sum_{S\in E}\alpha_S\ \le\ \Bigl|\,\bigcup_{S\in E}S\,\Bigr|
\qquad\text{for all nonempty } E\subseteq\mathcal E(n).
\end{equation}
We specialize Theorem~\ref{thm:general} to $\vb=(a,b,\dots,b)$.
As in the proof of Corollary~\ref{lem:minkowski}, we have $w_1=a-1$, $w_2=b$, and $w_k=0$ for $k\ge3$.
Since $\binom{0\cdot t+\alpha-1}{\alpha}=0$ for $\alpha\ge1$, every $\alpha\in\mathcal D(n)$ with $\alpha_A\ge1$ for some $|A|\ge3$ contributes nothing to \eqref{eq:generalsum}, so the sum may be restricted to sequences supported on $\mathcal E(n)$; for such sequences the condition \eqref{eq:hallgen} reduces to \eqref{eq:hall}, because enlarging a collection $E$ by sets $A$ with $\alpha_A=0$ only weakens the constraint.
Hence,
\begin{equation}\label{eq:binomsum}
\ehr_{\X_n(a,b)}(t)
=\sum_{\alpha\in\mathcal A(n)}\ \prod_{i=1}^{n}\binom{(a-1)t+\alpha_{\{i\}}-1}{\alpha_{\{i\}}}\prod_{1\le i<j\le n}\binom{bt+\alpha_{\{i,j\}}-1}{\alpha_{\{i,j\}}};
\end{equation} 
for $b=1$ and $a=p-n+2$ this recovers \cite[Eq.~(15)]{Behrend}, whose coefficients $p-n+1$ and $1$ are replaced here by $a-1$ and $b$.
Taking $E=\{\{i\}\}$ and $E=\{\{i,j\}\}$ in \eqref{eq:hall} shows that $\alpha_{\{i\}}\in\{0,1\}$ and $\alpha_{\{i,j\}}\in\{0,1,2\}$ for every $\alpha\in\mathcal A(n)$, so each binomial factor in \eqref{eq:binomsum} equals $1$ or $(a-1)t$ in the first product, and $1$, $bt$, or $bt(bt+1)/2$ in the second.

Finally, let $\Gamma(\alpha)$ be the graph on $[n]$ with $\alpha_{\{i\}}$ loops at vertex $i$ and $\alpha_{\{i,j\}}$ edges joining $i$ and $j$.
By Hall's marriage theorem, condition \eqref{eq:hall} holds if and only if the family of edges of $\Gamma(\alpha)$ (each edge recorded as its set of endpoints) admits a system of distinct representatives, that is, if and only if $\Gamma(\alpha)$ admits an orientation in which every vertex has indegree at most one; and a graph admits such an orientation precisely when each of its connected components contains at most one cycle.
Hence, $\Gamma$ is a bijection from $\mathcal A(n)$ onto $\mathcal G(n)$; see \cite[Section~4.2]{Behrend} for a detailed verification.
Under $\Gamma$, the indices with $\alpha_{\{i\}}=1$ correspond to loops, those with $\alpha_{\{i,j\}}=1$ to single edges, and those with $\alpha_{\{i,j\}}=2$ to edge pairs, so \eqref{eq:binomsum} becomes \eqref{eq:graphsum}.
\end{proof}

\begin{theorem}\label{thm:ehrhart-ab}
For all positive integers $a,b,n$,
\begin{equation}\label{eq:ehrhart-ab}
\begin{split}
\ehr_{\X_n(a,b)}(t)
=\frac{1}{2^{\,n}}\sum_{i=0}^{\lfloor n/2\rfloor}\sum_{j=2i}^{n}
(-1)^{i+1}&\binom{n}{\,n-j,\ j-2i,\ i,\ i\,}\,i!\,(2j-4i-3)!!\\[-1pt]
&\times(bt)^{\,j-i}\bigl((2bn-b+2a-2)t+2\bigr)^{\,n-j},
\end{split}
\end{equation}
where $(2j-4i-3)!!:=-\prod_{k=1}^{\,j-2i}(2k-3)$; this convention extends the double factorial to negative odd arguments, with $(-1)!!=1$ and $(-3)!!=-1$, precisely so that the boundary terms with $j=2i$ are included without exception.
Equivalently, with
$\beta:=\tfrac{a-1}{b}-\tfrac12+\tfrac1{bt}$,
\begin{equation}\label{eq:gf-ab}
\sum_{n\ge0}\ehr_{\X_n(a,b)}(t)\,\frac{z^n}{n!}
=\frac{1}{\sqrt{1-T(btz)}}\,\exp\!\left[\beta\,T(btz)-\frac{T(btz)^2}{4bt}\right].
\end{equation}
\end{theorem}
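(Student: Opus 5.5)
We start from Proposition~\ref{prop:graphsum}, which writes $\ehr_{\X_n(a,b)}(t)$ as a sum over $\mathcal G(n)$ of a weight that is multiplicative over connected components. Set $x:=(a-1)t$, $y:=bt$ and $p:=y(y+1)/2$. By the exponential formula, $\sum_n \ehr_{\X_n(a,b)}(t)\,z^n/n! = \exp(f(z))$, where $f$ is the exponential generating function of connected graphs in $\mathcal G$ with the same weights. We compute $f$ by splitting connected graphs into four types, each built on a spanning tree.

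\emph{Trees} on $m$ vertices have $m-1$ single edges. There are $m^{m-2}$ of them, so they contribute $\sum_m m^{m-2}y^{m-1}z^m/m! = \bigl(T(yz)-T(yz)^2/2\bigr)/y$.

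\emph{Looped trees} (a tree plus one loop) contribute $x\sum_m m\cdot m^{m-2}y^{m-1}z^m/m! = (x/y)\,T(yz)$.

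\emph{Enhanced trees} have a unique cycle of length $2$, that is, one edge pair. Their count is obtained by choosing a tree and doubling one of its $m-1$ edges, which replaces a factor $y$ by $p$. They contribute $\tfrac{p}{y^2}\sum_m (m-1)m^{m-2}y^{m-1}z^m/m!$, which equals $\tfrac{p}{y^2}\cdot\tfrac{T^2}{2}$ with $T=T(yz)$.

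\emph{Quasitrees} have a unique cycle of length $k\ge3$. Since each vertex of the cycle roots a tree, they contribute $\sum_{k\ge3} T^k/(2k)$, which equals $-\tfrac12\log(1-T)-\tfrac T2-\tfrac{T^2}4$.

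Summing the four pieces, the $T^2$ terms collect to $T^2\bigl(-\tfrac1{2y}+\tfrac{y+1}{4y}-\tfrac14\bigr) = -T^2/(4y)$. The linear terms collect to $T\bigl(\tfrac1y+\tfrac xy-\tfrac12\bigr) = \beta T$, and the remaining term is $-\tfrac12\log(1-T)$. Exponentiating gives \eqref{eq:gf-ab}. The four tree-function series identities are standard: they follow from $T=ze^{T}$ and the generating functions for rooted and unrooted trees, as in Behrend.

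For the explicit formula \eqref{eq:ehrhart-ab}, we extract $[z^n]$ by Lagrange inversion with $u=T(yz)$, using $yz=ue^{-u}$. For $F(u)=(1-u)^{-1/2}\exp(\beta u-u^2/(4y))$, the coefficient $[w^n]F(T(w))$ equals $\tfrac1n[u^{n-1}]F'(u)e^{nu}$. Equivalently, and more conveniently, it equals $[u^n]F(u)(1-u)e^{nu}$. This gives
\[
\ehr_{\X_n(a,b)}(t) = n!\,y^n\,[u^n]\,(1-u)^{1/2}\exp\bigl((\beta+n)u-u^2/(4y)\bigr).
\]
We then expand the three factors. The factor $e^{(\beta+n)u}$ supplies $u^{n-j}$ with coefficient $(\beta+n)^{n-j}/(n-j)!$, and $y(\beta+n)=\tfrac12\bigl((2bn-b+2a-2)t+2\bigr)$. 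The factor $e^{-u^2/(4y)}$ supplies $u^{2i}$ with coefficient $(-1)^i/(4^i y^i i!)$. The factor $(1-u)^{1/2}$ supplies $u^{j-2i}$ with coefficient $-(2(j-2i)-3)!!/(2^{j-2i}(j-2i)!)$, using the stated convention for the double factorial.

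Collecting the powers of $y$ and of $2$ yields the multinomial $\binom{n}{n-j,\,j-2i,\,i,\,i}\,i!$. The $i!$ appears after rewriting $n!/\bigl((n-j)!\,(j-2i)!\,i!\bigr)$ as $\binom{n}{n-j,j-2i,i,i}\,i!$. The overall factor $2^{-n}$ together with the sign $(-1)^{i+1}$ then gives \eqref{eq:ehrhart-ab}.

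The main obstacle is the bookkeeping. We must confirm that the enhanced-tree weight $p$ enters correctly, so that the $T^2$ coefficients combine to exactly $-1/(4y)$. We must also track the powers of $2$ and $y$ in the final coefficient extraction. We would check the final formula against Example~\ref{ex:n2} with $b_1=a$, $b_2=b$, and against $\ehr_{\X_3(3,2,2)}$ from Example~\ref{ex:running-ehrhart}.
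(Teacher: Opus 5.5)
Your proposal is correct and follows essentially the same route as the paper: the exponential formula applied to Proposition~\ref{prop:graphsum}, the split of connected graphs into trees, looped trees, enhanced trees and quasitrees with the same tree-function series, and then the identity $[u^n]\varphi(T(u))=[u^n]\varphi(u)(1-u)e^{nu}$ followed by expanding the three factors. One small slip: an enhanced tree has weight $p\,y^{m-2}$, so your displayed sum should carry the prefactor $p/y$ rather than $p/y^2$ (as written it equals $\frac{p}{y^3}\cdot\frac{T^2}{2}$); the value $\frac{p}{y^2}\cdot\frac{T^2}{2}$ that you then use is the correct one.
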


\begin{proof}
Abbreviate the weight appearing in \eqref{eq:graphsum} by
\[
w(G):=\bigl((a-1)t\bigr)^{\#\mathrm{loops}(G)}(bt)^{\#\mathrm{single}(G)}
\left(\frac{bt(bt+1)}{2}\right)^{\#\mathrm{pairs}(G)},
\]
and let $\mathcal G_{\mathrm C}(m)$ denote the set of connected graphs in $\mathcal G(m)$.
The statistics $\#\mathrm{loops}$, $\#\mathrm{single}$, and $\#\mathrm{pairs}$ are additive over connected components, so $w$ is multiplicative over connected components.
Setting $\ehr_{\X_0(a,b)}(t):=1$, the exponential formula \cite[Corollary~5.1.6]{StanleyEC2} applied to Proposition~\ref{prop:graphsum} gives
\begin{equation}\label{eq:expformula}
\sum_{n\ge0}\ehr_{\X_n(a,b)}(t)\,\frac{z^n}{n!}=e^{f(t,z)},
\qquad
f(t,z):=\sum_{m\ge1}\ \sum_{G\in\mathcal G_{\mathrm C}(m)}w(G)\,\frac{z^m}{m!}.
\end{equation}

A connected graph whose (unique) component contains at most one cycle is of exactly one of the following four kinds \cite[Section~4.3]{Behrend}, illustrated in Figure~\ref{fig:graphs}: a \emph{tree}; a \emph{looped tree} (a tree with a loop attached at one vertex); an \emph{enhanced tree} (a tree with one edge converted to an edge pair); or a \emph{quasitree} (a connected simple graph with exactly one cycle, of length at least three).
\begin{figure}[ht]
    \centering
    \begin{tikzpicture}[scale=0.8]
\begin{scope}
  \draw[edge] (0,0)--(1,0.2) (1,0.2)--(1.8,-0.3) (1,0.2)--(1.6,0.9);
  \foreach \p in {(0,0),(1,0.2),(1.8,-0.3),(1.6,0.9)} \node[vertex] at \p {};
  \node[font=\scriptsize] at (0.9,-1.0) {tree};
\end{scope}
\begin{scope}[xshift=3.6cm]
  \draw[edge] (0,0)--(1,0.2) (1,0.2)--(1.8,-0.3) (1,0.2)--(1.6,0.9);
  \draw[edge] (0,0) to[out=150,in=210,min distance=7mm] (0,0);
  \foreach \p in {(0,0),(1,0.2),(1.8,-0.3),(1.6,0.9)} \node[vertex] at \p {};
  \node[font=\scriptsize] at (0.9,-1.0) {looped tree};
\end{scope}
\begin{scope}[xshift=7.2cm]
  \draw[edge] (0,0) to[bend left=18] (1,0.2) (0,0) to[bend right=18] (1,0.2);
  \draw[edge] (1,0.2)--(1.8,-0.3) (1,0.2)--(1.6,0.9);
  \foreach \p in {(0,0),(1,0.2),(1.8,-0.3),(1.6,0.9)} \node[vertex] at \p {};
  \node[font=\scriptsize] at (0.9,-1.0) {enhanced tree};
\end{scope}
\begin{scope}[xshift=10.8cm]
  \draw[edge] (0,0)--(1.1,0)--(0.55,0.9)--cycle (1.1,0)--(1.9,0.6) (0,0)--(-0.7,-0.5);
  \foreach \p in {(0,0),(1.1,0),(0.55,0.9),(1.9,0.6),(-0.7,-0.5)} \node[vertex] at \p {};
  \node[font=\scriptsize] at (0.7,-1.0) {quasitree};
\end{scope}
\end{tikzpicture}
    \caption{The four kinds of connected graphs in $\mathcal G_{\mathrm C}(m)$.}
    \label{fig:graphs}
\end{figure}

On such a graph with $m$ vertices, the triple $(\#\mathrm{loops},\#\mathrm{single},\#\mathrm{pairs})$ equals $(0,m-1,0)$, $(1,m-1,0)$, $(0,m-2,1)$, and $(0,m,0)$, respectively.
So,
\[
w(G)=(bt)^{m-1},\qquad
(a-1)t\,(bt)^{m-1},\qquad
(bt)^{m-2}\,\frac{bt(bt+1)}{2},\qquad
(bt)^{m},
\]
that is, $w(G)=c\,(bt)^m$ with
\[
c=\frac{1}{bt},\qquad
\frac{a-1}{b},\qquad
\frac{bt+1}{2bt},\qquad
1,
\]
for trees, looped trees, enhanced trees, and quasitrees, respectively.
By Cayley's formula there are $m^{m-2}$ trees on $m$ labeled vertices, hence, $m^{m-1}$ looped trees and $(m-1)m^{m-2}$ enhanced trees; the exponential generating functions of the four families are expressed through the tree function \eqref{eq:treefn} as
\[
T(z)-\frac{T(z)^2}{2},\qquad
T(z),\qquad
\frac{T(z)^2}{2},\qquad
-\frac{T(z)}{2}-\frac{T(z)^2}{4}-\log\sqrt{1-T(z)},
\]
respectively; see \cite[Eqs.~(24)--(27)]{Behrend}, where the quasitree series is quoted from \cite[Eq.~(3.5)]{JKLP}.
Substituting into \eqref{eq:expformula} and writing $w:=T(btz)$, we obtain
\[
f(t,z)
=\frac{1}{bt}\Bigl(w-\frac{w^2}{2}\Bigr)
+\frac{a-1}{b}\,w
+\frac{bt+1}{2bt}\cdot\frac{w^2}{2}
-\frac{w}{2}-\frac{w^2}{4}-\log\sqrt{1-w}
=\beta\,w-\frac{w^2}{4bt}-\log\sqrt{1-w},
\]
since the coefficient of $w$ collects to $\frac1{bt}+\frac{a-1}{b}-\frac12=\beta$ and the coefficient of $w^2$ collects to $-\frac{1}{2bt}+\frac{bt+1}{4bt}-\frac14=-\frac{1}{4bt}$.
Exponentiating gives \eqref{eq:gf-ab}.

For \eqref{eq:ehrhart-ab}, write $g(w):=e^{\beta w-w^2/(4bt)}/\sqrt{1-w}$, so that \eqref{eq:gf-ab} reads
\[
\ehr_{\X_n(a,b)}(t)=n!\,(bt)^n\,[u^n]\,g(T(u)),
\]
where $[u^n]$ denotes coefficient extraction.
The tree function satisfies $[u^n]\,\varphi(T(u))=[u^n]\,\varphi(u)(1-u)e^{nu}$ for every formal power series $\varphi$ \cite[Eq.~(2.38)]{CorlessEtAl}, so that
\begin{equation}\label{eq:extracted}
\ehr_{\X_n(a,b)}(t)=n!\,(bt)^n\,[u^n]\ \sqrt{1-u}\ e^{(n+\beta)u-u^2/(4bt)}.
\end{equation}
Using the expansions
\[
[u^p]\sqrt{1-u}=-\frac{(2p-3)!!}{2^p\,p!},\qquad
[u^q]\,e^{(n+\beta)u}=\frac{(n+\beta)^q}{q!},\qquad
[u^{2i}]\,e^{-u^2/(4bt)}=\frac{(-1)^i}{i!\,(4bt)^i},
\]
with the double-factorial convention of the theorem statement, and parametrizing the coefficient of $u^n$ in the threefold product by $j:=p+2i$ (so $p=j-2i$ and $q=n-j$), \eqref{eq:extracted} becomes
\[
\ehr_{\X_n(a,b)}(t)
=\sum_{i=0}^{\lfloor n/2\rfloor}\sum_{j=2i}^{n}
(-1)^{i+1}\,\frac{n!}{(n-j)!\,(j-2i)!\,i!}\,
\frac{(2j-4i-3)!!}{2^{j-2i}\,4^{\,i}}\,
(bt)^{\,n-i}\,(n+\beta)^{n-j}.
\]
Since $2^{j-2i}4^i=2^j$ and $bt(n+\beta)=\tfrac12\bigl((2bn-b+2a-2)t+2\bigr)$, rewriting $(bt)^{n-i}(n+\beta)^{n-j}=(bt)^{j-i}\bigl(bt(n+\beta)\bigr)^{n-j}$ and inserting $i!/i!$ yields \eqref{eq:ehrhart-ab}.
\end{proof}

\begin{example}\label{ex:running-ab}
The running example $\X_3(3,2,2)=\X_3(3,2)$ has $(n,a,b)=(3,3,2)$.
Corollary~\ref{lem:minkowski} expresses its lifting as the Minkowski sum
\[
\overline{\X}_3(3,2)
=\sum_{i=1}^{3}\triangle_{\{i\}}
+2\sum_{i=1}^{3}\triangle_{\{i,\,4\}}
+2\!\!\sum_{1\le i<j\le 3}\!\!\triangle_{\{i,j,\,4\}},
\]
and the sum \eqref{eq:graphsum} of Proposition~\ref{prop:graphsum} runs over the $51$ graphs in $\mathcal G(3)$, each loop and each single edge carrying the weight $2t$ and each edge pair the weight $t(2t+1)$.
Theorem~\ref{thm:ehrhart-ab} evaluates to
\[
\ehr_{\X_3(3,2)}(t)=172t^3+84t^2+15t+1,
\]
in agreement with Example~\ref{ex:running-ehrhart}.
\end{example}

\begin{remark}\label{rem:behrend}
When $b=1$, the polytope $\X_n(a,1)$ is integrally equivalent to the partial permutahedron $\mathcal P(n,n+a-2)$ \cite[Proposition~3.16]{HanadaLentferVindas}, and Theorem~\ref{thm:ehrhart-ab} specializes to Behrend's Ehrhart formula for partial permutahedra \cite{Behrend}; likewise, the graph expansion of Proposition~\ref{prop:graphsum} reduces to his at $b=1$.
Theorem~\ref{thm:ehrhart-ab} may thus be viewed as the two-parameter extension of Behrend's formula.
\end{remark}

%%%%%%%%%%%%%%%%%%%%%%%%%%%%

\section{Magic positivity}\label{sec:magic}

In this section we apply Theorem~\ref{thm:ehrhart-ab} to establish a strong positivity property of the Ehrhart polynomials of the polytopes $\X_n(a,b)$.

\begin{definition}[{\cite[Definition~1.1]{LiuZhang}; see also \cite[Section~4.2]{FerroniHigashitani}}]\label{def:magic}
A lattice polytope $P$ of dimension $d$ is \emph{magic positive} if its Ehrhart polynomial can be expressed in the basis $\{t^i(t+1)^{d-i}\}_{i=0}^{d}$ with nonnegative coefficients, that is, if
$\ehr_P(t)=\sum_{i=0}^{d}\mu_i\,t^i(t+1)^{d-i}$ with $\mu_i\ge0$ for all $i$.
\end{definition}

Expanding each basis element shows that magic positivity implies Ehrhart positivity.
Moreover, by a result of Br\"and\'en \cite{Branden}, it implies that the $h^*$-polynomial is real-rooted; see also \cite{LiuZhang}.

Ferroni and Higashitani \cite[Problem~4.23]{FerroniHigashitani} include generalized parking-function polytopes among the families whose magic positivity is to be determined.
In this section we give a complete answer for the two-parameter family $\X_n(a,b)=\X_n(a,b,\dots,b)$ studied in \cite{HanadaLentferVindas}.
The specialization $b=1$ recovers, up to integral equivalence, the stable-range partial permutahedra considered by Liu and Zhang \cite{LiuZhang}: $\X_n(a,1)\cong\mathcal P(n,n+a-2)$ \cite[Proposition~3.16]{HanadaLentferVindas}.
Our theorem shows that no additional exceptions occur when the parameter $b$ is allowed to vary.

\begin{theorem}\label{thm:magic}
Let $a$, $b$, and $n$ be positive integers.
The polytope $\X_n(a,b)$ is magic positive if and only if $(n,a,b)\ne(2,1,1)$.
In the exceptional case, $\X_2(1,1)=PF_2$ and $\ehr_{PF_2}(t)=\tfrac{(t+1)(t+2)}{2}=(t+1)^2-\tfrac12\,t(t+1)$.
\end{theorem}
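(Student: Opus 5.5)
The plan is to follow the strategy of Liu and Zhang \cite{LiuZhang}, feeding in the coefficient-extraction form \eqref{eq:extracted} of Theorem~\ref{thm:ehrhart-ab}. The exceptional case and the small cases come first. For $(n,a,b)=(2,1,1)$, Example~\ref{ex:n2} gives $\ehr_{PF_2}(t)=\tfrac{(t+1)(t+2)}2=(t+1)^2-\tfrac12t(t+1)$; since $\{t^i(t+1)^{d-i}\}$ is a basis, this expansion is unique and $\mu_1=-\tfrac12<0$. For $n=1$ we have $\ehr(t)=(a-1)t+1$. If $a=1$ this is a point and $\mu_0=1$. If $a\ge2$, then $\ehr(t)=(t+1)+(a-2)t$, which has nonnegative coefficients. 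For $n=2$ we write $\ehr(t)=\mu_0(t+1)^2+\mu_1t(t+1)+\mu_2t^2$ from Example~\ref{ex:n2} with $w_1=a-1$, $w_2=b$. Then $\mu_0=1$, $\mu_2=\ehr(-1)$ up to sign conventions, and $\mu_1$ is read off from the $t^1$ coefficient. A direct check should show $\mu_1\ge0$ unless $a=b=1$.

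For general $n$ we change variables. Put $x:=1/t$. Then $t^{-n}\ehr(t)=\sum_i\mu_i(1+x)^{n-i}$, so magic positivity says exactly that $P_n(y):=t^{-n}\ehr(t)\big|_{x=y-1}$ has nonnegative coefficients in $y$. Substituting $u=v/(bt)$ in \eqref{eq:extracted} gives
\[
t^{-n}\ehr_{\X_n(a,b)}(t)=n!\,b^n\,[v^n]\,\sqrt{1-\tfrac{v x}{b}}\;\exp\!\Bigl(\tfrac{(n+\beta)x}{b}\,v-\tfrac{x v^2}{4b^2}\Bigr)\cdot(\text{normalization}),
\]
where $(n+\beta)x/b$ is an explicit polynomial of degree two in $x$, namely $\tfrac{(2bn-b+2a-2)}{2b^2}x+\tfrac{1}{b^2}x^2$ after clearing factors of $t$. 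I then replace $x$ by $y-1$ and expand. The result is a triple sum in the indices $(i,j)$ of Theorem~\ref{thm:ehrhart-ab} together with binomial expansions of $(y-1)^k$. The goal is to group it as a \emph{$b$-deformed} version of the auxiliary series of Liu and Zhang: a positive main term coming from the exponential factor with $n$-dependent rate, plus correction terms coming from $\sqrt{1-u}$ and from $e^{-u^2/(4bt)}$.

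The core step is to prove that for each $k$ the coefficient of $y^k$ is nonnegative. I would try to show that the dominant contribution, namely the $j=n$, $i=0$ part of the exponential, which carries the large parameter $n+(a-1)/b$, majorizes the alternating corrections termwise. Two facts make this plausible. The negative contributions carry factors $1/b$ or $1/(4b^2)$, and the quantities $|(2p-3)!!|/(2^pp!)$ decay geometrically. So one expects refined versions of Liu and Zhang's coefficient inequalities to hold uniformly in $b\ge1$, with $b=1$ the worst case, where their result for $\mathcal P(n,n+a-2)$ applies.

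The main obstacle is this uniform estimate. Cancellation is most delicate for the middle coefficients and for the smallest admissible data ($a=1$ with small $n$). The route I would try is monotonicity: show that each $\mu_i$, viewed as a polynomial in $b^{-1}$ and $a-1$, is increasing in $a$ and controlled by its $b=1$ value plus nonnegative terms. That reduces the problem to Liu and Zhang's theorem, supplemented by a finite computer check of the remaining small cases $n\le n_0$.
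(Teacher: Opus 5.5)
\textbf{There is a genuine gap: the general-$n$ inequality is never proved, and the route you sketch for it would not work.}

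Your small cases are fine. The treatment of $n=1$ and of the exceptional case $(2,1,1)$ is correct. For $n=2$ the ``direct check'' goes through: Example~\ref{ex:n2} gives $\mu_0=1$ and $\mu_1=2(a-2)+\tfrac{3b}{2}$, which is negative only when $a=b=1$. The remaining coefficient $\mu_2=\ehr(-1)$ is the number of interior lattice points, hence nonnegative.

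For general $n$, however, the proposal contains no proof. Nonnegativity of the coefficients of $y^k$ is exactly what has to be shown, and you leave it as ``the main obstacle'' with a heuristic. That heuristic fails as stated.
\begin{itemize}
\item Treating $\delta=1/b$ as a continuous deformation parameter with $b=1$ as the worst case is false. Already for $(n,a)=(2,1)$ one has $\mu_2=(b-1)(b-2)/2$, that is, $\mu_2/(2b^2)=(1-\delta)(1-2\delta)/4$. This vanishes at $\delta=1$ but is negative for $\delta\in(\tfrac12,1)$. So no decomposition ``$b=1$ value plus nonnegative terms'' exists for polynomials in $b^{-1}$, and any proof must use that $b$ is an integer.
\item Reducing to Liu and Zhang's $b=1$ theorem by deformation therefore cannot work.
\item A finite computer check for $n\le n_0$ cannot close the argument either. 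Each fixed $n$ still involves infinitely many pairs $(a,b)$, and nothing in your plan produces $n_0$.
\end{itemize}

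What is missing is a structure that isolates each magic coefficient so that its positivity can be proved. The substitution $u=v/(bt)$ is not needed, and your displayed exponent is also off: its quadratic term becomes $-x^3v^2/(4b^3)$. Instead:
\begin{enumerate}
\item Write $\tfrac1{bt}=\tfrac{y-1}{b}$ directly in \eqref{eq:extracted} and expand $\exp\bigl(y(u-u^2/4)/b\bigr)$ in powers of $y$. Each magic coefficient is then a single coefficient extraction, $\mu_i=\tfrac{n!\,b^i}{(n-i)!}\,B_{i,n-i}\bigl(n+\tfrac{a-2}{b}\bigr)$, rather than an alternating triple sum with $(y-1)^k$ expansions.
\item The identity $\partial_xB_{i,r}=B_{i-1,r}$ drives an induction on $i$ that reduces everything to the boundary $x=i+r-\delta$, which is the case $a=1$. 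This is the rigorous form of your ``increasing in $a$.'' It requires nonnegativity of the lower-index terms along the whole ray, with $B_{1,1}(s)=s-\tfrac54$ and $B_{1,0}(s)=s-1$ as base cases.
\item At the boundary, the tree-function identity gives $B_{i,r}(i+r-\delta)=[u^i]R(u)^rC(u)$. Liu and Zhang's estimates for $R$ transfer unchanged.
\item The $\delta$-deformed series $C$ needs a new argument: one must show $C=1-\delta u+P(u)$ with $P$ coefficientwise nonnegative. Its $u^2$ coefficient is exactly $(1-\delta)(1-2\delta)/4$. This is nonnegative only because $\delta=1$ or $\delta\le\tfrac12$, and that is where integrality of $b$ enters.
\end{enumerate}
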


\begin{remark}[Relation to Ferroni--Higashitani's problem]\label{rem:FerroniHigashitani}
Theorem~\ref{thm:magic} gives a complete answer to \cite[Problem~4.23]{FerroniHigashitani} for the generalized parking-function polytopes $\X_n(a,b,\dots,b)$ studied by Hanada, Lentfer, and Vindas-Mel\'endez \cite{HanadaLentferVindas}: every polytope in this family is magic positive except for the classical parking-function polytope $\X_2(1,1)=PF_2$.
The theorem does not settle magic positivity for arbitrary $\X_n(\vb)$; that broader question is stated as Conjecture~\ref{conj:magicgeneral}.
\end{remark}

Our proof follows the strategy that Liu and Zhang developed for partial permutahedra \cite{LiuZhang}, with two modifications: the starting point is the coefficient-extraction formula \eqref{eq:extracted}, which plays the role of Behrend's formula \cite[Eq.~(30)]{Behrend}, and the auxiliary series acquires a deformation parameter $\tfrac1b$ whose effect on the coefficient estimates must be controlled.
The proof of Theorem~\ref{thm:magic} proceeds in three steps.
\begin{enumerate}
    \item First, we convert magic positivity into an explicit nonnegativity problem: applying the magic transform to \eqref{eq:extracted} shows that the coefficient of $t^i(t+1)^{n-i}$ in $\ehr_{\X_n(a,b)}(t)$ is a positive multiple of the quantity $B_{i,\,n-i}\bigl(n+\frac{a-2}{b}\bigr)$ defined in \eqref{eq:Bir} (Lemma~\ref{lem:magiccoeffs}).

    \item Second, we analyze the boundary values $B_{i,r}\bigl(i+r-\tfrac1b\bigr)$: Lagrange inversion expresses them as coefficients of the series $R(u)^rC(u)$ of \eqref{eq:RC} (Lemma~\ref{lem:boundary}), where $R$ is the series studied by Liu and Zhang while $C$ deforms with $\tfrac1b$, and the coefficient estimates of Lemmas~\ref{lem:R} and~\ref{lem:C} establish their nonnegativity (Proposition~\ref{prop:boundarypos}).

    \item Finally, since differentiation with respect to the parameter $x$ lowers the first index of $B_{i,r}(x)$ (Lemma~\ref{lem:lower}), an induction on $i$ propagates nonnegativity from the boundary to all $x\ge i+r-\tfrac1b$ (Proposition~\ref{prop:allx}); the exceptional pair $(i,r)=(1,1)$, which occurs only when $n=2$, accounts for the single exception $\X_2(1,1)$.
\end{enumerate}
Throughout this section we fix a positive integer $b$ and write $\delta:=\tfrac1b\in(0,1]$.

Following \cite[Section~2]{LiuZhang}, for a polynomial $f$ of degree at most $d$ define its \emph{degree-$d$ magic transform} by
\[
M_d f(y)\ :=\ (1-y)^d\,f\!\left(\frac{y}{1-y}\right).
\]
Then $f(t)=\sum_{i=0}^d\mu_i\,t^i(1+t)^{d-i}$ if and only if $M_df(y)=\sum_{i=0}^d\mu_i\,y^i$, so magic positivity of a $d$-dimensional lattice polytope is equivalent to coefficientwise nonnegativity of the degree-$d$ magic transform of its Ehrhart polynomial.

For integers $i,r\ge0$ and a real number $x$, define
\begin{equation}\label{eq:Bir}
B_{i,r}(x)\ :=\ [z^i]\,(1-z/4)^r\,\sqrt{1-z}\ \exp\!\left(\Bigl(x-\frac12\Bigr)z+\frac{z^2}{4b}\right),
\end{equation}
where the dependence on $b$ is suppressed from the notation.

\begin{lemma}\label{lem:magiccoeffs}
Let $n\ge2$ and write $M_n\ehr_{\X_n(a,b)}(y)=\sum_{i=0}^{n}\mu_i\,y^i$.
Then, for $0\le i\le n$,
\[
\mu_i\ =\ \frac{n!\ b^{\,i}}{(n-i)!}\ B_{i,\,n-i}\!\left(n+\frac{a-2}{b}\right).
\]
\end{lemma}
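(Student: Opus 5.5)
The plan is to apply the degree-$n$ magic transform directly to the coefficient-extraction formula \eqref{eq:extracted}. To do this, I first rewrite that formula so that each term is visibly homogeneous in the two quantities $t$ and $t+1$.

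\emph{Step 1: group the terms.} Put $P:=n+\beta$ and $x:=n+\frac{a-2}{b}$. Expanding the three factors of \eqref{eq:extracted} as in the proof of Theorem~\ref{thm:ehrhart-ab}, with indices $p+q+2i=n$, gives
\[
\ehr_{\X_n(a,b)}(t)=n!\sum_{p+q+2i=n}[u^p]\sqrt{1-u}\cdot\frac{(-1)^i}{q!\,i!\,4^i}\,(bt)^{p+i}\,(btP)^{q}.
\]
The key observation is the identity $btP=bt\bigl(x-\tfrac12\bigr)+(t+1)$. It follows from the definition of $\beta$, since $\tfrac{a-1}{b}\,bt+1=\tfrac{a-2}{b}\,bt+(t+1)$. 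Each summand therefore has degree $n-i$ in the pair $(t,t+1)$. I will make it homogeneous of degree exactly $n$ by multiplying by $1=\bigl((t+1)-t\bigr)^i$.

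\emph{Step 2: take the magic transform.} Apply $M_n$, which amounts to the substitutions $t\mapsto y$ and $t+1\mapsto1$ in homogeneous expressions of degree $n$. This yields
\[
M_n\ehr_{\X_n(a,b)}(y)=n!\,[u^n]\,\sqrt{1-byu}\;\exp\!\Bigl(\bigl(1+by(x-\tfrac12)\bigr)u-\tfrac{b\,y(1-y)}{4}\,u^2\Bigr).
\]
The factor $(1-y)^i$ coming from Step 1 is what sits in front of $u^2$. I will justify this step by checking termwise that the $u^{p}$, $u^{q}$ and $u^{2i}$ contributions reproduce the summands of Step 1.

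\emph{Step 3: extract the coefficient of $y^i$.} Change variables to $z:=byu$ together with $u$. The exponent becomes
\[
u\Bigl(1-\frac z4\Bigr)+\Bigl(x-\frac12\Bigr)z+\frac{z^2}{4b},
\]
because $-\tfrac{by}{4}u^2=-\tfrac{zu}{4}$ and $+\tfrac{by^2}{4}u^2=\tfrac{z^2}{4b}$. A monomial $y^iu^n$ equals $b^{-i}z^iu^{n-i}$, so $\mu_i$ is $n!\,b^i$ times the coefficient of $z^iu^{n-i}$. Extracting $u^{n-i}$ from $e^{u(1-z/4)}$ gives $(1-z/4)^{n-i}/(n-i)!$. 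What remains is exactly $[z^i]$ of the series defining $B_{i,n-i}(x)$ in \eqref{eq:Bir}, which gives the claimed formula.

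The main obstacle is the bookkeeping in Step 2. One must notice that the lower-degree terms have to be padded by $\bigl((t+1)-t\bigr)^i$ and not simply by $(t+1)^i$. This padding is precisely what produces the extra $\exp\bigl(z^2/(4b)\bigr)$ factor. I will verify the final formula against the running example, $n=3$, $(a,b)=(3,2)$, where $\ehr=172t^3+84t^2+15t+1$, to catch sign or normalization slips.
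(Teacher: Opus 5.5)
Your proof is correct and is essentially the paper's argument. The paper substitutes $t=\frac{y}{1-y}$ directly into \eqref{eq:extracted}, uses $\frac{1}{bt}=\frac{1}{by}-\frac1b$ to rewrite the exponent as $\bigl(x-\tfrac12\bigr)u+\frac{u^2}{4b}+\frac{u-u^2/4}{by}$, and reads off $\mu_i$ by expanding $\exp\bigl(\frac{u-u^2/4}{by}\bigr)$ in powers of $\frac{1}{by}$; your Step~2 series is this one after the rescaling $u\mapsto byu$, which absorbs the prefactor $(by)^n$, and your identity $btP=bt\bigl(x-\tfrac12\bigr)+(t+1)$ together with the padding by $\bigl((t+1)-t\bigr)^i$ is the termwise form of $\frac{1}{bt}=\frac{1}{by}-\frac1b$, so you take a longer but equivalent route to the same substitution.
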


\begin{proof}
We compute as in \cite[Section~2]{LiuZhang}, starting from \eqref{eq:extracted}.
Substituting $t=\frac{y}{1-y}$, so that $bt=\frac{by}{1-y}$ and $\frac{1}{bt}=\frac{1}{by}-\frac1b$, and recalling $\beta=\frac{a-1}{b}-\frac12+\frac{1}{bt}$, the exponent in \eqref{eq:extracted} becomes
\[
(n+\beta)u-\frac{u^2}{4bt}
=\Bigl(n+\frac{a-1}{b}-\frac12\Bigr)u+\frac{u-u^2/4}{bt}
=\Bigl(x-\frac12\Bigr)u+\frac{u^2}{4b}+\frac{u-u^2/4}{by},
\]
where $x:=n+\frac{a-2}{b}$.
Hence,
\[
M_n\ehr_{\X_n(a,b)}(y)
=(1-y)^n\,n!\,\Bigl(\frac{by}{1-y}\Bigr)^{\!n}[u^n]\,\sqrt{1-u}\;e^{(x-1/2)u+u^2/(4b)}\,\exp\!\left(\frac{u-u^2/4}{by}\right).
\]
Expanding $\exp\bigl((u-u^2/4)/(by)\bigr)=\sum_{r\ge0}\frac{(u-u^2/4)^r}{r!\,(by)^r}$ and noting that $(u-u^2/4)^r=u^r(1-u/4)^r$ has order at least $r$ in $u$, only the terms with $0\le r\le n$ survive the extraction of $[u^n]$, and the coefficient of $y^{\,i}=y^{\,n-r}$ is
\[
\mu_i=\frac{n!\,b^{\,n}}{r!\,b^{\,r}}\,[u^{\,n}]\,u^r(1-u/4)^r\sqrt{1-u}\,e^{(x-1/2)u+u^2/(4b)}
=\frac{n!\,b^{\,i}}{r!}\,B_{i,r}(x),\qquad r=n-i.\qedhere
\]
\end{proof}

Since the prefactor $\frac{n!\,b^i}{(n-i)!}$ is positive, Theorem~\ref{thm:magic} reduces to the nonnegativity of the quantities $B_{i,r}\bigl(n+\frac{a-2}{b}\bigr)$ with $i+r=n$.
Note that $a\ge1$ gives $n+\frac{a-2}{b}\ge n-\frac1b=i+r-\delta$, so the values of $x$ that must be controlled fill the ray $x\ge i+r-\delta$.
The next two lemmas record the elementary properties of $B_{i,r}$.

\begin{lemma}\label{lem:lower}
For $i\ge1$ and $r\ge0$ we have $\frac{\partial}{\partial x}B_{i,r}(x)=B_{i-1,r}(x)$, and $B_{0,r}(x)=1$.
Moreover, $B_{1,r}(x)=x-1-\frac r4$; in particular $B_{1,0}(x)=x-1$ and $B_{1,1}(x)=x-\frac54$.
\end{lemma}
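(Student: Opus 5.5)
The plan is to work directly from the definition \eqref{eq:Bir}, treating $B_{i,r}(x)$ as the coefficient of $z^i$ in a power series $F_r(x,z)$ whose only dependence on $x$ is through the factor $e^{(x-1/2)z}$.

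\emph{Derivative identity.} Write
\[
F_r(x,z):=(1-z/4)^r\sqrt{1-z}\,\exp\!\bigl((x-\tfrac12)z+\tfrac{z^2}{4b}\bigr).
\]
Then $\frac{\partial}{\partial x}F_r(x,z)=z\,F_r(x,z)$. Coefficient extraction commutes with differentiation in $x$, because each coefficient $[z^i]F_r$ is a polynomial in $x$ obtained from finitely many terms. Hence
\[
\frac{\partial}{\partial x}B_{i,r}(x)=[z^i]\,zF_r(x,z)=[z^{i-1}]F_r(x,z)=B_{i-1,r}(x)\qquad(i\ge1).
\]

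\emph{Constant term.} Each factor of $F_r$ has constant term $1$ at $z=0$, so $B_{0,r}(x)=1$.

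\emph{Linear coefficient.} I will collect the $z$-coefficients of the three factors:
\begin{itemize}
\item $(1-z/4)^r$ contributes $-r/4$;
\item $\sqrt{1-z}$ contributes $-1/2$;
\item the exponential contributes $x-\tfrac12$, since the $z^2/(4b)$ term does not affect order one.
\end{itemize}
Because all constant terms equal $1$, the coefficient of $z$ in the product is the sum of these linear coefficients:
\[
B_{1,r}(x)=x-\tfrac12-\tfrac12-\tfrac r4=x-1-\tfrac r4.
\]
Specializing $r=0$ gives $B_{1,0}(x)=x-1$, and $r=1$ gives $B_{1,1}(x)=x-\tfrac54$.

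This is a routine verification with no real obstacle. The only point that needs care is the sign and size of the $\sqrt{1-z}$ coefficient. It follows from the expansion $[u^1]\sqrt{1-u}=-\frac{(-1)!!}{2}=-\tfrac12$ recorded in the proof of Theorem~\ref{thm:ehrhart-ab}, together with the convention on double factorials given there.
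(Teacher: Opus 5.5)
Your proof is correct and follows the paper's route. Differentiating the only $x$-dependent factor $e^{(x-1/2)z}$ multiplies the series by $z$ and shifts $[z^i]$ to $[z^{i-1}]$. Since every factor has constant term $1$, the linear coefficient is the sum $-\frac r4-\frac12+\bigl(x-\frac12\bigr)=x-1-\frac r4$.
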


\begin{proof}
The parameter $x$ occurs only in the factor $e^{(x-1/2)z}$, and differentiating it with respect to $x$ multiplies the series by $z$, which shifts coefficient extraction from $[z^i]$ to $[z^{i-1}]$.
The constant term of the product in \eqref{eq:Bir} is $1$, and its linear coefficient is $-\frac r4-\frac12+\bigl(x-\frac12\bigr)=x-1-\frac r4$.
\end{proof}

We express the boundary values $B_{i,r}(i+r-\delta)$ through the tree function \eqref{eq:treefn}.
Define
\begin{equation}\label{eq:RC}
R(u):=\frac{T(u)\bigl(1-T(u)/4\bigr)}{u}=e^{T(u)}\Bigl(1-\frac{T(u)}{4}\Bigr),
\qquad
C(u):=\frac{\exp\bigl(-\bigl(\frac12+\delta\bigr)T(u)+\frac{\delta}{4}T(u)^2\bigr)}{\sqrt{1-T(u)}}.
\end{equation}
The series $R$ is the one appearing in \cite[Eq.~(5)]{LiuZhang}; the series $C$ specializes to Liu and Zhang's at $\delta=1$.

\begin{lemma}\label{lem:boundary}
For all $i,r\ge0$,
\[
B_{i,r}\bigl(i+r-\delta\bigr)\ =\ [u^i]\,R(u)^r\,C(u).
\]
\end{lemma}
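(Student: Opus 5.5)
The plan is to write $R(u)^r C(u)$ as a single composite $\varphi(T(u))$ and then apply the coefficient-extraction identity for the tree function already used in the proof of Theorem~\ref{thm:ehrhart-ab}, namely $[u^n]\,\varphi(T(u))=[u^n]\,\varphi(u)(1-u)e^{nu}$ \cite[Eq.~(2.38)]{CorlessEtAl}, valid for every formal power series $\varphi$.

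\emph{Step 1: the composite.} By the second expression in \eqref{eq:RC}, $R(u)=e^{T(u)}\bigl(1-T(u)/4\bigr)$. This form follows from the functional equation $T(u)=u\,e^{T(u)}$ in \eqref{eq:treefn}. Hence
\[
R(u)^rC(u)=\varphi\bigl(T(u)\bigr),\qquad
\varphi(z):=e^{rz}\Bigl(1-\frac z4\Bigr)^{r}\,\frac{\exp\bigl(-(\tfrac12+\delta)z+\tfrac{\delta}{4}z^2\bigr)}{\sqrt{1-z}}.
\]
Here $\varphi$ is a genuine formal power series in $z$, since $(1-z)^{-1/2}$ has constant term $1$. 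So the extraction identity applies with $n=i$.

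\emph{Step 2: apply the identity.} The identity gives
\[
[u^i]\,R(u)^rC(u)=[z^i]\,\varphi(z)(1-z)e^{iz}
=[z^i]\,\Bigl(1-\frac z4\Bigr)^{r}\sqrt{1-z}\,\exp\!\Bigl(\bigl(i+r-\tfrac12-\delta\bigr)z+\frac{\delta z^2}{4}\Bigr),
\]
where the factor $(1-z)/\sqrt{1-z}$ has been combined into $\sqrt{1-z}$.

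\emph{Step 3: match with \eqref{eq:Bir}.} Set $x:=i+r-\delta$. Then $x-\tfrac12=i+r-\tfrac12-\delta$, and since $\delta=1/b$ we have $\delta z^2/4=z^2/(4b)$. The last series is therefore exactly the series defining $B_{i,r}(x)$ in \eqref{eq:Bir}, which proves the lemma.

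I expect no real obstacle here. The only points to check are the bookkeeping of the exponent of $e^{z}$, namely $r$ from $R^r$, $i$ from the Lagrange factor, and $-(\tfrac12+\delta)$ from $C$, and the correct form of the Lagrange identity for the tree function, specifically that the extra factor is $(1-z)e^{iz}$ rather than $e^{iz}$ alone.
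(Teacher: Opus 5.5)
Your proposal is correct and is essentially the paper's proof. You use the same $\varphi$ (you write $e^{rz}e^{-(\frac12+\delta)z}$ where the paper combines the exponent into $(r-\delta-\tfrac12)z$), the same tree-function extraction identity \cite[Eq.~(2.38)]{CorlessEtAl} with $k=i$, and the same identification $\varphi(T(u))=R(u)^rC(u)$ via $e^{T}=T/u$; the only difference is that you identify the composite before extracting, whereas the paper extracts first and identifies afterwards.
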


\begin{proof}
By the coefficient identity $[u^k]\,\varphi(T(u))=[z^k]\,\varphi(z)(1-z)e^{kz}$ \cite[Eq.~(2.38)]{CorlessEtAl} (see also \cite[Lemma~3.1]{LiuZhang}), applied with $k=i$ and
\[
\varphi(z)=(1-z/4)^r(1-z)^{-1/2}\exp\!\left(\Bigl(r-\delta-\frac12\Bigr)z+\frac{\delta}{4}z^2\right),
\]
we obtain
\[
[u^i]\,\varphi(T(u))
=[z^i]\,(1-z/4)^r\sqrt{1-z}\,\exp\!\left(\Bigl(i+r-\delta-\frac12\Bigr)z+\frac{\delta}{4}z^2\right)
=B_{i,r}(i+r-\delta).
\]
It remains to identify $\varphi(T(u))$.
Since $T=ue^{T}$, we have $e^{rT}=(T/u)^r$, so
\[
\varphi(T)=(1-T/4)^r(1-T)^{-1/2}\,e^{rT}\,e^{-(\delta+\frac12)T+\frac{\delta}{4}T^2}
=\left(\frac{T(1-T/4)}{u}\right)^{\!r}C(u)=R(u)^rC(u).\qedhere
\]
\end{proof}

Since $R$ does not depend on $\delta$, the coefficient estimates of Liu and Zhang import verbatim.

\begin{lemma}[{\cite[Lemmas~3.3 and~3.4]{LiuZhang}}]\label{lem:R}
Write $R(u)=\sum_{k\ge0}\rho_ku^k$ and, for $r\ge2$, $R(u)^r=\sum_{k\ge0}q_k^{(r)}u^k$.
Then $\rho_0=1$ and $\rho_k=\frac{(k+2)(k+1)^{k-2}}{2\,k!}$ for $k\ge1$; in particular, $\rho_1=\frac34$.
Moreover, all $\rho_k$ are nonnegative and $\rho_k\ge\rho_{k-1}$ for $k\ge2$.
Moreover, for every $r\ge2$ the coefficients $q^{(r)}_k$ are nonnegative and satisfy $q^{(r)}_k\ge q^{(r)}_{k-1}$ for all $k\ge1$.
\end{lemma}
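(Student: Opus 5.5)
The plan is to compute $\rho_k$ by Lagrange inversion, get monotonicity from the ratio $\rho_k/\rho_{k-1}$, and handle powers through the factorization $(1-u)R^r=\bigl((1-u)R^2\bigr)R^{r-2}$.

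\emph{Step 1: the coefficients $\rho_k$.} Since $R=e^{T}-\tfrac14Te^{T}$ and $Te^{T}=T^2/u$, I will use the standard Lagrange-inversion formulas for the tree function $T=ue^{T}$:
\[
[u^m]\,T^j=\frac{j\,m^{m-j-1}}{(m-j)!},\qquad [u^k]\,e^{T}=\frac{(k+1)^{k-1}}{k!}.
\]
This gives $[u^k]\,T^2/u=[u^{k+1}]\,T^2=\frac{2(k+1)^{k-2}}{(k-1)!}$ for $k\ge1$. Hence
\[
\rho_k=\frac{(k+1)^{k-1}}{k!}-\frac{(k+1)^{k-2}}{2(k-1)!}
=\frac{(k+1)^{k-2}}{k!}\Bigl((k+1)-\frac k2\Bigr)=\frac{(k+2)(k+1)^{k-2}}{2\,k!}.
\]
The constant term is $\rho_0=1$, and $\rho_1=\tfrac34$. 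Nonnegativity of every $\rho_k$ is then evident.

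\emph{Step 2: monotonicity of $\rho_k$.} For $k\ge2$ I will compute the ratio directly:
\[
\frac{\rho_k}{\rho_{k-1}}=\frac{(k+2)(k+1)^{k-3}}{k^{k-2}}=\frac{k+2}{k}\Bigl(1+\frac1k\Bigr)^{k-3}.
\]
For $k\ge3$ both factors are at least $1$. For $k=2$ the ratio is $\tfrac{4}{2}\cdot\tfrac23=\tfrac43$. So $\rho_k\ge\rho_{k-1}$ for all $k\ge2$. Note that $\rho_1<\rho_0$, which is why the power statement needs $r\ge2$.

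\emph{Step 3: powers.} Saying that the coefficients of a series $A$ are nonnegative and nondecreasing is the same as saying that $(1-u)A$ has nonnegative coefficients, given $A$ has nonnegative constant term. First take $r=2$. The series $(1-u)R$ has coefficients $d_0=1$, $d_1=\rho_1-\rho_0=-\tfrac14$, and $d_j=\rho_j-\rho_{j-1}\ge0$ for $j\ge2$. Therefore
\[
[u^k]\,(1-u)R^2=\sum_j d_j\rho_{k-j}\ \ge\ \rho_k-\tfrac14\rho_{k-1}.
\]
This is at least $\tfrac12$ when $k=1$, and nonnegative when $k\ge2$ by Step 2. So $R^2$ has nonnegative, nondecreasing coefficients. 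For $r\ge3$ I write $(1-u)R^r=\bigl((1-u)R^2\bigr)\cdot R^{r-2}$, a product of two series with nonnegative coefficients. This proves the claim for every $r\ge2$.

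I do not expect a serious obstacle. The only delicate point is the negative coefficient $d_1=-\tfrac14$ in $(1-u)R$, and it is absorbed in Step 3 by the bound $\rho_k\ge\rho_{k-1}$ from Step 2.
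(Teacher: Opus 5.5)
Your proof is correct, and it differs from the paper mainly in that the paper gives no proof of its own here. It imports the lemma verbatim from Liu and Zhang, justified by the observation that $R$ does not depend on $\delta$.

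Your Step~1 is the natural computation. It uses the same Lagrange-inversion evaluations for $T=ue^{T}$ as those the paper quotes in \eqref{eq:treecoeffs}, together with $[u^k]e^{T}=(k+1)^{k-1}/k!$. The ratio test in Step~2 is then immediate.

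The most economical part is Step~3. You encode ``nonnegative and nondecreasing'' as nonnegativity of the coefficients of $(1-u)A$. This isolates the single negative coefficient $d_1=\rho_1-\rho_0=-\tfrac14$ of $(1-u)R$. You absorb it once, at $r=2$, through $\rho_k-\tfrac14\rho_{k-1}\ge\tfrac34\rho_{k-1}$. Every $r\ge3$ then follows for free from $(1-u)R^r=\bigl((1-u)R^2\bigr)R^{r-2}$, so no closed form for $q^{(r)}_k$ is ever needed.

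The citation buys the paper brevity and an explicit link to the $b=1$ case of Liu and Zhang. Your version makes Section~\ref{sec:magic} self-contained. It also shows that the same coefficient $\rho_1-\rho_0=-\tfrac14$ is the value $B_{1,1}(2-\delta)$ at $\delta=1$, which is the source of the lone exception $(n,a,b)=(2,1,1)$ in Proposition~\ref{prop:boundarypos}.
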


The series $C$, on the other hand, deforms with $\delta$, and the required coefficient estimate is the following.

\begin{lemma}\label{lem:C}
Let $b$ be a positive integer and $\delta=\frac1b$.
Then
\[
C(u)\ =\ 1-\delta u+P(u),
\]
where $P(u)$ is a formal power series of order at least two with nonnegative coefficients.
\end{lemma}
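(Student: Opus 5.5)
By Lemma~\ref{lem:boundary} with $r=0$, writing $C(u)=\sum_{k\ge0}c_ku^k$, we have
\[
c_k=B_{k,0}(k-\delta)=[z^k]\,\sqrt{1-z}\,\exp\!\Bigl(\bigl(k-\tfrac12-\delta\bigr)z+\tfrac{\delta}{4}z^2\Bigr).
\]
Lemma~\ref{lem:lower} gives $c_0=B_{0,0}=1$ and $c_1=B_{1,0}(1-\delta)=-\delta$. This accounts for the displayed part $1-\delta u$. The whole content of the lemma is therefore the inequality $c_k\ge0$ for every $k\ge2$ and every $\delta=1/b\in(0,1]$. For $b=1$ (that is, $\delta=1$) this is Liu and Zhang's estimate for their series $C$. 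The task is to show that the deformation does not destroy nonnegativity.

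I would isolate the $\delta$-dependence multiplicatively. Since $T-T^2/4=T(1-T/4)=uR(u)$, the definition \eqref{eq:RC} factors as
\[
C(u)=C_0(u)\,e^{-\delta uR(u)},\qquad C_0(u):=\frac{e^{-T(u)/2}}{\sqrt{1-T(u)}}.
\]
Expanding the exponential then gives
\[
c_k=\sum_{m=0}^{k}\frac{(-\delta)^m}{m!}\,a_{k,m},\qquad a_{k,m}:=[u^{k-m}]\,C_0(u)R(u)^m.
\]
The plan has three steps.
\begin{enumerate}
\item Show that $C_0$ has nonnegative coefficients. By the coefficient identity of \cite[Eq.~(2.38)]{CorlessEtAl} used in Lemma~\ref{lem:boundary}, its coefficients are the $\delta=0$ values $B_{k,0}(k)$. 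I would prove their nonnegativity either with the method Liu and Zhang use for their $C$, or by comparing with their case $\delta=1$ through the factor $e^{uR}$, which has nonnegative coefficients.
\item Combine step 1 with Lemma~\ref{lem:R}, which gives nonnegative coefficients of $R^m$. This makes every $a_{k,m}\ge0$.
\item Show that $c_k\ge0$ by an alternating-series argument in $\delta$. Concretely, I would prove $\frac{\delta}{m+1}a_{k,m+1}\le a_{k,m}$ for all $m$ and all $\delta\le1$. Then the series terms decrease in absolute value, so $c_k$ is at least the sum of the first two terms, $a_{k,0}-\delta a_{k,1}$, which step 3 makes nonnegative.
\end{enumerate}

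The comparison in step 3 is the main obstacle. It asks to compare $[u^{k-m-1}]C_0R^{m+1}$ with $(m+1)[u^{k-m}]C_0R^m$. Writing $R^{m+1}=R^m\cdot R$, this should follow from the monotonicity $q^{(r)}_j\ge q^{(r)}_{j-1}$ in Lemma~\ref{lem:R}: multiplying by $u$ and by $R$, whose coefficients grow only slowly, costs less than the factor $m+1$. Making this precise may require a tailored estimate for small $m$, especially $m=0,1$, where the factor $m+1$ offers little room and $R^0,R^1$ fall outside the $r\ge2$ range of Lemma~\ref{lem:R}.

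If the termwise comparison fails, I would fall back on a different argument. I would compute $c_2,c_3,\dots$ explicitly as polynomials in $\delta$ up to a modest threshold and check them on $[0,1]$. For large $k$ I would use a saddle-point or crude exponential bound on the $z$-form of $c_k$, where the dominant factor $e^{(k-1/2-\delta)z}$ swamps the $\delta$-perturbation $e^{-\delta z(1-z/4)}$ uniformly for $\delta\in(0,1]$.
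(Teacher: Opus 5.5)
\textbf{There is a genuine gap in step 3.} Your factorization $C=C_0\,e^{-\delta uR}$ and the expansion $c_k=\sum_m\frac{(-\delta)^m}{m!}a_{k,m}$ are correct. Step 1 also holds, most simply because $\log C_0=\sum_{j\ge2}T^j/(2j)$. (Comparing with $\delta=1$ through $e^{uR}$ would not work as stated, since the $\delta=1$ series has the negative coefficient $-u$.)

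The problem with step 3 is not a missing estimate. Its $m=0$ case, $\delta a_{k,1}\le a_{k,0}$, is false exactly where the lemma is tight. From $C_0=1+\tfrac14u^2+\tfrac23u^3+\tfrac{53}{32}u^4+\cdots$ and $R=1+\tfrac34u+u^2+\tfrac53u^3+\cdots$ you get $(a_{2,0},a_{2,1},a_{2,2})=(\tfrac14,\tfrac34,1)$, hence
\[
c_2=\tfrac14-\tfrac34\delta+\tfrac12\delta^2=\tfrac{(1-\delta)(1-2\delta)}{4}.
\]
Your two-term bound $\tfrac14-\tfrac34\delta$ is negative for every $\delta>\tfrac13$. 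This includes $b=1$ and $b=2$, where $c_2=0$ only because the $\delta^2$ term cancels it. At $\delta=1$ the same failure recurs for larger $k$:
\begin{itemize}
\item $a_{3,0}=\tfrac23<\tfrac54=a_{3,1}$, while $c_3=0$ exactly;
\item $a_{4,0}=\tfrac{53}{32}<\tfrac{121}{48}=a_{4,1}$.
\end{itemize}

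More fundamentally, your plan uses only $\delta\in(0,1]$. If step 3 held as stated, it would give $c_2\ge0$ on all of $(0,1]$. But the formula above shows the conclusion fails for $\delta\in(\tfrac12,1)$. Any correct proof must therefore use the arithmetic fact that $\delta=1/b$ forces $\delta=1$ or $\delta\le\tfrac12$. Your fallback has the same defect, since ``check them on $[0,1]$'' fails at $k=2$. Its large-$k$ part, a saddle-point bound with no explicit threshold or error control, is a heuristic rather than an argument.

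\textbf{What is missing} is a decomposition that puts all the negativity into one explicit term that can be beaten uniformly in $k$, together with exact treatment of the tight small cases. The paper writes $C=e^{-T}G(T)$ with $G(w)=(1-w)^{-1/2}\exp\bigl((\tfrac12-\delta)w+\tfrac{\delta}{4}w^2\bigr)$. Its logarithm, $(1-\delta)w+\tfrac{1+\delta}{4}w^2+\sum_{j\ge3}w^j/(2j)$, has nonnegative coefficients. Since $e^{-T}T^q=uT^{q-1}$, this gives
\[
c_k=-\tfrac{(k-1)^{k-1}}{k!}+\sum_{q\ge2}g_q\,[u^{k-1}]T^{q-1},
\]
with every summand of the series nonnegative. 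The paper then computes $c_2$ and $c_3$ exactly; $c_2\ge0$ is exactly where $\delta\in\{1\}\cup(0,\tfrac12]$ is used. For $k\ge4$, the bounds $g_2\ge\tfrac{15}{32}$, $g_3\ge\tfrac16$, $g_4\ge\tfrac18$ reduce $c_k\ge0$ to a quartic polynomial inequality in $k$.

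If you want to keep your alternating expansion in $\delta$, you need at least two more things. First, an explicit threshold beyond which $a_{k,1}\le a_{k,0}$; at $k=4$ the ratio is about $1.52$. Second, exact verification of every smaller $k$, at $\delta=1$ and uniformly on $(0,\tfrac12]$.
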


\begin{proof}
Define $G(w):=(1-w)^{-1/2}\exp\bigl(\bigl(\frac12-\delta\bigr)w+\frac{\delta}{4}w^2\bigr)$, so that $C(u)=e^{-T(u)}\,G(T(u))$.
Then
\[
\log G(w)=\Bigl(\frac12-\delta\Bigr)w+\frac{\delta}{4}w^2+\sum_{j\ge1}\frac{w^j}{2j}
=(1-\delta)\,w+\frac{1+\delta}{4}\,w^2+\sum_{j\ge3}\frac{w^j}{2j}.
\]
Since $0<\delta\le1$, all coefficients of $\log G$ are nonnegative; consequently, writing $G(w)=\sum_{q\ge0}g_qw^q$, all $g_q$ are nonnegative, with
\[
g_0=1,\qquad g_1=1-\delta,\qquad g_2=\frac{1+\delta}{4}+\frac{(1-\delta)^2}{2},
\qquad\text{and}\qquad g_q\ \ge\ \frac{1}{2q}\quad(q\ge3),
\]
the last bound because $G=\exp(\log G)$ dominates $1+\log G$ coefficientwise when $\log G$ has nonnegative coefficients.
Since $T=ue^T$ gives $e^{-T}=u/T$, we have $e^{-T}T^q=uT^{q-1}$ for $q\ge1$, and hence,
\begin{equation}\label{eq:Cdecomp}
C(u)=e^{-T(u)}+g_1u+\sum_{q\ge2}g_q\,uT(u)^{q-1}.
\end{equation}
We now examine the coefficients $c_k:=[u^k]C(u)$, using the evaluations
\begin{equation}\label{eq:treecoeffs}
[u^k]\,e^{-T(u)}=-\frac{(k-1)^{k-1}}{k!}\quad(k\ge1),
\qquad
[u^N]\,T(u)^s=\frac{s}{N}\cdot\frac{N^{N-s}}{(N-s)!}\quad(N\ge s\ge1)
\end{equation}
from \cite[Eqs.~(11) and~(12)]{LiuZhang}.
Clearly $c_0=1$, and $c_1=-1+g_1=-\delta$.
For $k\ge2$, \eqref{eq:Cdecomp} gives
\begin{equation}\label{eq:cklower}
c_k=-\frac{(k-1)^{k-1}}{k!}+\sum_{q\ge2}g_q\,[u^{k-1}]\,T(u)^{q-1},
\end{equation}
in which every summand of the series is nonnegative.
It remains to show $c_k\ge0$ for $k\ge2$.

For $k=2$, only $q=2$ contributes and
\[
c_2=-\frac12+g_2=\frac{(1-\delta)(1-2\delta)}{4}.
\]
Since $\delta=\frac1b$ with $b$ a positive integer, either $\delta=1$ or $\delta\le\frac12$; in both cases $c_2\ge0$.
For $k=3$, only $q=2,3$ contribute, and $[u^2]T=[u^2]T^2=1$, so, with $s:=1-\delta\in[0,1)$ and $g_3=\frac16+\frac{(1-\delta)(1+\delta)}{4}+\frac{(1-\delta)^3}{6}$,
\[
c_3=-\frac23+g_2+g_3=\frac{s}{4}+\frac{s^2}{4}+\frac{s^3}{6}\ \ge\ 0.
\]
For $k\ge4$ we retain the terms $q=2,3,4$ of \eqref{eq:cklower} and discard the rest.
On $[0,1]$ the quadratic $g_2=\frac34-\frac34\delta+\frac{\delta^2}{2}$ attains its minimum $\frac{15}{32}$ at $\delta=\frac34$, while $g_3\ge\frac16$ and $g_4\ge\frac18$.
Together with the evaluations \eqref{eq:treecoeffs},
\[
[u^{k-1}]T=\frac{(k-1)^{k-3}}{(k-2)!},\qquad
[u^{k-1}]T^2=\frac{2(k-1)^{k-4}}{(k-3)!},\qquad
[u^{k-1}]T^3=\frac{3(k-1)^{k-5}}{(k-4)!},
\]
this yields
\begin{align*}
c_k&\ \ge\ \frac{(k-1)^{k-5}}{k!}\left[-(k-1)^4+\frac{15}{32}\,k(k-1)^3+\frac13\,k(k-1)^2(k-2)+\frac38\,k(k-1)(k-2)(k-3)\right]\\
&\ =\ \frac{(k-1)^{k-5}}{96\;k!}\,\bigl(17k^4-95k^3+115k^2+59k-96\bigr).
\end{align*}
Finally, $17k^4-95k^3+115k^2+59k-96=(k-4)\bigl(17k^3-27k^2+7k+87\bigr)+252$, and for $k\ge4$ the cubic factor is positive, so $c_k>0$ for all $k\ge4$.
This proves $C(u)=1-\delta u+P(u)$ with $P$ of order at least two and nonnegative coefficients.
\end{proof}

\begin{proposition}\label{prop:boundarypos}
Let $b$ be a positive integer, $\delta=\frac1b$, and let $i,r\ge0$ satisfy $i+r\ge2$.
Then
\[
B_{i,r}(i+r-\delta)\ \ge\ 0,
\]
except for the single pair $(i,r)=(1,1)$, for which $B_{1,1}(2-\delta)=\frac34-\delta$; this value is negative if and only if $b=1$.
\end{proposition}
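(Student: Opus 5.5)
By Lemma~\ref{lem:boundary}, $B_{i,r}(i+r-\delta)=[u^i]\,R(u)^rC(u)$. The plan is to expand this product using the coefficient information of Lemmas~\ref{lem:R} and~\ref{lem:C}. Write $C(u)=\sum_k c_ku^k$ with $c_0=1$, $c_1=-\delta$ and $c_k\ge0$ for $k\ge2$. Then
\[
[u^i]\,R^rC=\sum_{k=0}^{i}c_k\,[u^{i-k}]R^r\ \ge\ [u^i]R^r-\delta\,[u^{i-1}]R^r ,
\]
because every term with $k\ge2$ is nonnegative once the coefficients of $R^r$ are known to be nonnegative. So everything reduces to comparing two consecutive coefficients of $R^r$, and I would split into cases according to $r$.

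\emph{Case $r=0$.} Here $R^0=1$, so the quantity is just $c_i$. Since $i=i+r\ge2$, Lemma~\ref{lem:C} gives $c_i\ge0$.

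\emph{Case $r\ge2$.} If $i=0$, the value is $q^{(r)}_0=1$. If $i\ge1$, Lemma~\ref{lem:R} gives $q^{(r)}_i\ge q^{(r)}_{i-1}\ge0$, and together with $\delta\le1$ this yields
\[
q^{(r)}_i-\delta q^{(r)}_{i-1}\ \ge\ (1-\delta)\,q^{(r)}_{i-1}\ \ge\ 0 .
\]

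\emph{Case $r=1$.} The coefficients are now $\rho_k$, and $i\ge1$.
\begin{itemize}
\item For $i\ge2$, Lemma~\ref{lem:R} gives $\rho_i\ge\rho_{i-1}\ge0$, so $\rho_i-\delta\rho_{i-1}\ge0$ by the same argument as above.
\item For $i=1$, the sum is exact: $\rho_1c_0+\rho_0c_1=\tfrac34-\delta$. This can also be read off from Lemma~\ref{lem:lower}, since $B_{1,1}(2-\delta)=2-\delta-\tfrac54$. It is negative exactly when $\delta>\tfrac34$, and because $\delta=1/b$ with $b$ a positive integer, this happens precisely when $b=1$.
\end{itemize}

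The main obstacle has already been dealt with in Lemma~\ref{lem:C}: all the coefficients of $C$ beyond degree one are nonnegative even though $C$ is deformed by $\delta$. What remains is bookkeeping. The only points that need care are these:
\begin{itemize}
\item the monotonicity $\rho_k\ge\rho_{k-1}$ is only stated for $k\ge2$, which is exactly why $(1,1)$ has to be treated separately;
\item the constraint $i+r\ge2$ is what guarantees $c_i\ge0$ in the case $r=0$;
\item the case $i=0$ is trivial, since the value is then $1$.
\end{itemize}
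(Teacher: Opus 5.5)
Your proposal is correct and is essentially the paper's proof: the paper likewise combines Lemma~\ref{lem:boundary} and Lemma~\ref{lem:C} to write $[u^i]R^rC=q^{(r)}_i-\delta q^{(r)}_{i-1}+[u^i]R^rP$ with a nonnegative remainder. It then uses the same cases: $r=0$; $r=1$ with $i\ge2$; the exact value $\rho_1-\delta\rho_0=\frac34-\delta$ at $(i,r)=(1,1)$; and $r\ge2$ via the monotonicity in Lemma~\ref{lem:R}, the only cosmetic difference being that you discard the $k\ge2$ terms as an inequality rather than keeping them as $[u^i]R^rP$.
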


\begin{proof}
By Lemma~\ref{lem:boundary}, $B_{i,r}(i+r-\delta)=[u^i]R(u)^rC(u)$, and by Lemma~\ref{lem:C},
\begin{equation}\label{eq:threeterm}
[u^i]R(u)^rC(u)=q^{(r)}_i-\delta\,q^{(r)}_{i-1}+[u^i]R(u)^rP(u),
\end{equation}
with the conventions $q^{(0)}_k=[k=0]$, $q^{(1)}_k=\rho_k$, and $q^{(r)}_{-1}=0$.
The last term of \eqref{eq:threeterm} is nonnegative, because $R$ and $P$ have nonnegative coefficients.
If $r=0$, then $i\ge2$ and $[u^i]C(u)=c_i\ge0$ by Lemma~\ref{lem:C}.
If $r=1$ and $i\ge2$, then $\rho_i-\delta\rho_{i-1}\ge\rho_i-\rho_{i-1}\ge0$ by Lemma~\ref{lem:R}.
If $r=1$ and $i=1$, then, since $P$ has order at least two, $[u]RP=0$ and the value is exactly $\rho_1-\delta\rho_0=\frac34-\delta$.
If $r\ge2$, then $q^{(r)}_i\ge q^{(r)}_{i-1}\ge\delta\,q^{(r)}_{i-1}$ by Lemma~\ref{lem:R}.
\end{proof}

\begin{proposition}\label{prop:allx}
Let $b$ be a positive integer and $\delta=\frac1b$, and let $i,r\ge0$ satisfy $i+r\ge2$ and $(i,r)\ne(1,1)$.
Then
\[
B_{i,r}(x)\ \ge\ 0\qquad\text{for every }x\ge i+r-\delta.
\]
\end{proposition}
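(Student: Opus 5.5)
We will argue by induction on $i$, with $r$ held fixed, combining the boundary positivity of Proposition~\ref{prop:boundarypos} with the derivative relation of Lemma~\ref{lem:lower}. The basic mechanism is the following. If $B_{i-1,r}(x)\ge0$ for all $x\ge i-1+r-\delta$, then in particular $B_{i-1,r}\ge 0$ on the smaller ray $x\ge i+r-\delta$. Since $\frac{\partial}{\partial x}B_{i,r}=B_{i-1,r}$, the function $B_{i,r}$ is nondecreasing on that ray. Hence $B_{i,r}(x)\ge B_{i,r}(i+r-\delta)\ge0$ there, the last inequality being Proposition~\ref{prop:boundarypos}. So for each $r$ it suffices to set up a base case and check that the hypothesis $(i-1,r)$ is admissible, or else handle it directly.

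\emph{Base cases.} For $i=0$ we have $B_{0,r}=1>0$ by Lemma~\ref{lem:lower}. For $i=1$, Lemma~\ref{lem:lower} gives $B_{1,r}(x)=x-1-\tfrac r4$, which is increasing in $x$. Its value at $x=1+r-\delta$ is $\tfrac{3r}{4}-\delta$, so it is nonnegative on the ray as soon as $\tfrac{3r}{4}\ge\delta$, i.e.\ for every $r\ge2$ (since $\delta\le1$). The case $r=1$ is the excluded pair, and the case $(1,0)$ has $i+r<2$ and is not needed.

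\emph{Inductive step for $i\ge2$.} Here we need $B_{i-1,r}\ge0$ on $x\ge i+r-\delta$, and the required inputs depend on $r$:
\begin{itemize}
\item For $r\ge2$, the pair $(i-1,r)$ satisfies the hypotheses, so the induction applies directly.
\item For $r=0$ and $i\ge3$, the pair $(i-1,0)$ has $i-1\ge2$, so again the induction applies. For $(i,r)=(2,0)$ we need only $B_{1,0}(x)=x-1\ge0$ on $x\ge2-\delta$, which holds since $2-\delta\ge1$.
\item For $r=1$ and $i\ge3$, the pair $(i-1,1)$ is admissible because $i-1\ge2$.
\item For $(i,r)=(2,1)$, the predecessor is the excluded pair $(1,1)$, but we only need $B_{1,1}(x)=x-\tfrac54\ge0$ on $x\ge3-\delta$, and indeed $3-\delta\ge2>\tfrac54$.
\end{itemize}
Thus in each case the derivative is nonnegative on the required ray, and the boundary value supplied by Proposition~\ref{prop:boundarypos} completes the step.

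The step needing the most care is the bookkeeping of rays. The hypothesis on $(i-1,r)$ holds on a ray starting one unit further left than the one we need, so nesting works in our favor. The exceptional pair $(1,1)$ must be bypassed using the explicit linear formula rather than the inductive hypothesis. No new analytic estimates are needed beyond Proposition~\ref{prop:boundarypos}.
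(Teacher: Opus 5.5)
Your proof is correct and follows essentially the same route as the paper. Both arguments induct on $i$, take the boundary value from Proposition~\ref{prop:boundarypos}, and propagate it along the ray using $\frac{\partial}{\partial x}B_{i,r}=B_{i-1,r}$, with the predecessors $(1,1)$ and $(1,0)$ bypassed by the explicit linear formulas of Lemma~\ref{lem:lower}. The only cosmetic difference is that you settle $i=1$ directly from $B_{1,r}(x)=x-1-\tfrac r4$, whereas the paper runs it through the general step using $B_{0,r}=1$.
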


\begin{proof}
We induct on $i$, simultaneously for all $r\ge0$.
If $i=0$, then $B_{0,r}=1$ by Lemma~\ref{lem:lower}.
Let $i\ge1$, set $N:=i+r-\delta$, and suppose the claim holds for all smaller first indices.
By Lemma~\ref{lem:lower},
\[
B_{i,r}(x)=B_{i,r}(N)+\int_N^x B_{i-1,r}(s)\,ds,
\]
and $B_{i,r}(N)\ge0$ by Proposition~\ref{prop:boundarypos}, since $(i,r)\ne(1,1)$.
It remains to check that $B_{i-1,r}(s)\ge0$ for all $s\ge N$.
If $(i-1)+r\ge2$ and $(i-1,r)\ne(1,1)$, this follows from the induction hypothesis, because $N=\bigl((i-1)+r-\delta\bigr)+1$.
If $(i-1,r)=(1,1)$, then $(i,r)=(2,1)$ and $N=3-\delta\ge2$, so $B_{1,1}(s)=s-\frac54\ge0$ for $s\ge N$ by Lemma~\ref{lem:lower}.
Finally, if $(i-1)+r\le1$, then, since $i+r\ge2$, $i\ge1$, and $(i,r)\ne(1,1)$, the only possibility is $(i,r)=(2,0)$, with $N=2-\delta\ge1$, and $B_{1,0}(s)=s-1\ge0$ for $s\ge N$ by Lemma~\ref{lem:lower}.
\end{proof}

\begin{proof}[Proof of Theorem~\ref{thm:magic}]
First let $n=1$.
If $a=1$, then $\X_1(1)$ is the single point $(1)$, of dimension zero, and $\ehr(t)=1$ is magic positive.
If $a\ge2$, then $\X_1(a)=[1,a]$ has dimension one and $\ehr(t)=(a-1)t+1=(t+1)+(a-2)t$, which is magic positive since $a\ge2$.

Now let $n\ge2$, so that $\dim\X_n(a,b)=n$ \cite[Proposition~2.2]{BBCDDDLMMNV}.
By Lemma~\ref{lem:magiccoeffs}, the magic coefficients are $\mu_i=\frac{n!\,b^i}{r!}B_{i,r}(x)$ with $r=n-i$ and $x=n+\frac{a-2}{b}$; since $a\ge1$,
\[
x\ \ge\ n-\frac1b\ =\ (i+r)-\delta .
\]
For every index $i$ with $(i,r)\ne(1,1)$, Proposition~\ref{prop:allx} gives $\mu_i\ge0$.
The pair $(i,r)=(1,1)$ occurs only when $n=2$, in which case Lemma~\ref{lem:lower} gives
\[
\mu_1=2b\,B_{1,1}\Bigl(2+\frac{a-2}{b}\Bigr)=2b\left(\frac34+\frac{a-2}{b}\right)=\frac{3b}{2}+2(a-2).
\]
If $a\ge2$, this is positive; if $a=1$, it equals $\frac{3b-4}{2}$, which is nonnegative if and only if $b\ge2$.
Hence, all magic coefficients are nonnegative whenever $(n,a,b)\ne(2,1,1)$.
For $(n,a,b)=(2,1,1)$, we get $\mu_1=-\frac12$, so $\X_2(1,1)=PF_2$ is not magic positive; its Ehrhart polynomial is $\frac{(t+1)(t+2)}{2}=(t+1)^2-\frac12\,t(t+1)$.
\end{proof}

\begin{example}\label{ex:running-magic}
For the running example $(n,a,b)=(3,3,2)$, we have $\delta=\tfrac12$ and $x=n+\tfrac{a-2}{b}=\tfrac72$, and direct computation of the coefficients \eqref{eq:Bir} gives
\[
B_{0,3}\bigl(\tfrac72\bigr)=1,\qquad
B_{1,2}\bigl(\tfrac72\bigr)=2,\qquad
B_{2,1}\bigl(\tfrac72\bigr)=\tfrac{19}{8},\qquad
B_{3,0}\bigl(\tfrac72\bigr)=\tfrac{17}{8},
\]
the second of which also follows from Lemma~\ref{lem:lower}.
Lemma~\ref{lem:magiccoeffs} then yields $(\mu_0,\mu_1,\mu_2,\mu_3)=(1,12,57,102)$; indeed,
\[
\ehr_{\X_3(3,2)}(t)=(t+1)^3+12\,t(t+1)^2+57\,t^2(t+1)+102\,t^3,
\]
exhibiting the magic positivity of $\X_3(3,2,2)$ concretely.

The value $B_{2,1}\bigl(\tfrac72\bigr)$ also illustrates the mechanism of the proof.
At the boundary point $2+1-\delta=\tfrac52$, Lemmas~\ref{lem:boundary}, \ref{lem:R}, and~\ref{lem:C} give
\[
B_{2,1}\bigl(\tfrac52\bigr)=[u^2]\,R(u)\,C(u)=\rho_2c_0+\rho_1c_1+\rho_0c_2
=1-\tfrac34\cdot\tfrac12+0=\tfrac58,
\]
using $(\rho_0,\rho_1,\rho_2)=\bigl(1,\tfrac34,1\bigr)$ and $(c_0,c_1,c_2)=\bigl(1,-\tfrac12,0\bigr)$; note that $c_2=\frac{(1-\delta)(1-2\delta)}{4}$ vanishes at $\delta=\tfrac12$.
Lemma~\ref{lem:lower} then propagates positivity to $x=\tfrac72$, as in the proof of Proposition~\ref{prop:allx}:
\[
B_{2,1}\bigl(\tfrac72\bigr)=B_{2,1}\bigl(\tfrac52\bigr)+\int_{5/2}^{7/2}B_{1,1}(s)\,ds
=\tfrac58+\int_{5/2}^{7/2}\Bigl(s-\tfrac54\Bigr)ds=\tfrac58+\tfrac74=\tfrac{19}{8}.
\]
Finally, the $h^*$-polynomial of $\X_3(3,2)$ is $h^*(z)=102z^3+661z^2+268z+1$, whose three roots (approximately $-6.046$, $-0.431$, and $-0.004$) are all real, as guaranteed by Corollary~\ref{cor:realrooted} below.
\end{example}

\begin{corollary}\label{cor:realrooted}
For all positive integers $a$, $b$, and $n$, the $h^*$-polynomial of $\X_n(a,b)$ is real-rooted.
\end{corollary}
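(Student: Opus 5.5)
The plan is to split into the generic case covered by Theorem~\ref{thm:magic} and the single exceptional case $(n,a,b)=(2,1,1)$. In the generic case we combine magic positivity with Br\"and\'en's theorem \cite{Branden}, as recalled after Definition~\ref{def:magic}. If $P$ has dimension $d$ and $\ehr_P(t)=\sum_i\mu_i t^i(t+1)^{d-i}$ with all $\mu_i\ge0$, then $h^*_P(z)$ is real-rooted. The mechanism is to pass through the generating function: we have $\sum_{t\ge0}t^i(t+1)^{d-i}z^t=A_{d,i}(z)/(1-z)^{d+1}$ for suitable Eulerian-type polynomials $A_{d,i}$. These form an interlacing family in the sense of Br\"and\'en, so every nonnegative combination of them is real-rooted.

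Since Theorem~\ref{thm:magic} says $\X_n(a,b)$ is magic positive whenever $(n,a,b)\ne(2,1,1)$, the $h^*$-polynomial is real-rooted in all those cases. I would cite this implication directly in the form given in \cite{Branden} (see also \cite{LiuZhang}) rather than reproving the interlacing. I would also check that the dimension used in the magic basis is the true dimension of the polytope. For $n\ge2$ this holds because $\dim\X_n(a,b)=n$ by \cite[Proposition~2.2]{BBCDDDLMMNV}. For $n=1$, the proof of Theorem~\ref{thm:magic} already worked in dimension $0$ for $a=1$ and dimension $1$ for $a\ge2$. Those low cases are trivially real-rooted anyway: $h^*=1$ or $h^*=1+(a-2)z$.

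In the exceptional case $\X_2(1,1)=PF_2$, I would compute directly. Here $\ehr(t)=\binom{t+2}{2}$, the Ehrhart polynomial of a unimodular triangle, so $h^*(z)=1$. A nonzero constant has no roots and is real-rooted by convention.

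The only delicate point is matching conventions. I need to confirm that Br\"and\'en's result is stated for the basis $t^i(t+1)^{d-i}$ with $d=\dim P$, and that a constant $h^*$-polynomial counts as real-rooted. Once that is confirmed, the corollary follows immediately.
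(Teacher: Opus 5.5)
Your proposal is correct and follows the paper's proof. For $(n,a,b)\ne(2,1,1)$ you combine Theorem~\ref{thm:magic} with Br\"and\'en's theorem, and you handle the unimodular triangle $\X_2(1,1)=PF_2$ directly via $h^*=1$; your added checks on the dimension used in the magic basis and on the $n=1$ cases ($h^*=1$ and $h^*=1+(a-2)z$) agree with how the paper handles these in the proof of Theorem~\ref{thm:magic}.
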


\begin{proof}
For $(n,a,b)\ne(2,1,1)$ this follows from Theorem~\ref{thm:magic} and Br\"and\'en's result \cite{Branden} that magic positivity implies real-rootedness of the $h^*$-polynomial (see \cite[Section~1]{LiuZhang}).
The exceptional polytope $\X_2(1,1)=PF_2$ is a unimodular triangle, so $h^*_{PF_2}(x)=1$, which is vacuously real-rooted.
\end{proof}

\begin{remark}\label{rem:LZcompare}
Via the integral equivalence $\X_n(a,1)\cong\mathcal P(n,n+a-2)$ of \cite[Proposition~3.16]{HanadaLentferVindas}, the case $b=1$ of Theorem~\ref{thm:magic} is precisely the theorem of Liu and Zhang \cite[Theorem~1.3]{LiuZhang} for partial permutahedra $\mathcal P(m,p)$ in the stable range $p\ge m-1$, with the exceptional pair $(m,p)=(2,1)$ corresponding to $(n,a,b)=(2,1,1)$.
For $b\ge2$ the polytopes $\X_n(a,b)$ are not partial permutahedra, and Theorem~\ref{thm:magic} shows that no new exceptions arise anywhere in the two-parameter family.

A complementary large-parameter result was obtained by Avila, Ferroni, and Morales \cite{AvilaFerroniMorales}, who proved magic positivity for type-$\mathcal Y$ generalized permutahedra whose simplex coefficients are sufficiently large relative to the dimension.
Since $\overline{\X}_n(a,b)$ is such a polytope, with coefficients $a-1$ and $b$ (Corollary~\ref{lem:minkowski}), their result covers the region of parameters meeting those bounds; Theorem~\ref{thm:magic} is complementary in that it reaches all positive $a$ and $b$, including the small values, and the unique exception, that lie beyond such large-coefficient criteria.
\end{remark}

%%%%%%%%%%%%%%%%%%%%%%%%%%%%

\section{Future directions}\label{sec:future}

We conclude with some directions for further study.

Theorem~\ref{thm:magic} settles the magic-positivity question of Ferroni and Higashitani \cite[Problem~4.23]{FerroniHigashitani} for the two-parameter family $\X_n(a,b,\dots,b)$; the corresponding problem for arbitrary parameter vectors $\vb\in\Z_{>0}^n$ remains open.
Every $\vb\in\Z_{>0}^2$ is of the form $(a,b)$, so Theorem~\ref{thm:magic} also settles magic positivity for all $\vb$-parking-function polytopes of dimension at most two.
Beyond the family $\vb=(a,b,\dots,b)$, we have used Theorem~\ref{thm:general} to compute the Ehrhart polynomial of $\X_n(\vb)$ and its magic transform for all $\vb\in\{1,2,3,4\}^3$ and for a variety of parameter vectors of length four; in every case the magic coefficients are nonnegative.
For instance, for $\vb=(2,3,1)$ (Example~\ref{ex:running-ehrhart}) the magic transform of the Ehrhart polynomial is $1+\tfrac{59}{6}y+\tfrac{115}{3}y^2+54y^3$.
This evidence suggests the following.

\begin{conjecture}\label{conj:magicgeneral}
For every $n\ge3$ and every $\vb\in\Z_{>0}^n$, the polytope $\X_n(\vb)$ is magic positive.
Equivalently, the classical parking-function polytope $PF_2=\X_2(1,1)$ is the only $\vb$-parking-function polytope that is not magic positive.
\end{conjecture}

By \cite{Branden}, Conjecture~\ref{conj:magicgeneral} would imply that the $h^*$-polynomial of every $\vb$-parking-function polytope is real-rooted, and in particular that its coefficients are log-concave and unimodal.
A natural approach would be to extend the analysis of Section~\ref{sec:magic}; the obstacle is that for general $\vb$ the generating function of Theorem~\ref{thm:general} no longer collapses to graphs with at most one cycle per component, and no analogue of the coefficient-extraction formula \eqref{eq:extracted} is currently available.
It would also be natural to investigate magic positivity across the larger family $PF(\mathbf u)$ of Liu and Thawinrak \cite{LiuThawinrak}, where ties among the entries of $\mathbf u$ are allowed.

A second direction concerns the lattice-point polynomial $\Lp_n$ of Proposition~\ref{prop:polynomiality}.
Theorem~\ref{thm:general} expresses $\Lp_n$ as a sum over draconian sequences, but it would be interesting to have a more structured expression, for example a determinantal one.
The enumeration of $\vb$-parking functions themselves is governed by Gon\v{c}arov polynomials \cite{KungYan}; since the lattice points of $\X_n(\vb)$ contain, in general properly, the $\vb$-parking functions (see \cite[Section~2]{BBCDDDLMMNV}), one may ask whether $\Lp_n$ admits a comparable theory.

Finally, the slice description of Theorem~\ref{thm:slice} invites refinement: the layer polytopes $\X_{n-1}(\vb')$ interpolate between adjacent coordinates of $\vb$ as the height increases, and it would be interesting to understand how finer invariants, such as the $h^*$-polynomial or the combinatorial type in the sense of \cite[Corollary~3.14]{BBCDDDLMMNV}, evolve along the layers.

%--------------------
\section*{Tool and Computational Resource Disclosure}
During the preparation of this work, the authors used Claude Opus 4.8 to brainstorm the proof strategy of Proposition~\ref{prop:polynomiality}. These discussions led the authors to consider the use of Faulhaber's formula and to identify the references cited in the proof. 
After using these tools, the authors reviewed, edited, and verified all resulting content as needed and take full responsibility for the content of the article. 
The mathematical arguments, proofs, and exposition were written, checked for correctness, and approved by the authors.
The authors also used SageMath to perform computations and verify examples appearing in the article.

%--------------------Acknowledgements--------------------%
\section*{Acknowledgments} 
The authors are grateful to the Mathematics Department at Harvey Mudd College for providing a wonderful environment to produce their work.
The authors also thank Rishi Mathur for her assistance with SageMath computations. 
CH, AL, and ARVM are partially supported by the NSF under Award DMS-2532321.

%--------------------Bibliography--------------------%

\bibliographystyle{amsplain}
\bibliography{bibliography}

\end{document}